\theoremstyle{plain}
\tikzstyle{slice}=[text=gray]
\newtheorem{lemma}{Lemma}
\newtheorem{propos}{Proposition}
\newtheorem{theorem}{Theorem}
\newtheorem{corollary}{Corollary}
\theoremstyle{remark}
\newtheorem*{remark}{Remark}
\newtheorem*{theorem*}{Theorem}
\theoremstyle{definition}
\newtheorem{definition}{Definition}[section]
\newcommand{\Ext}{\mathop{\mathrm{Ext}}\nolimits}
\newcommand{\End}{\mathop{\mathrm{End}}\nolimits}
 \def\Z{\mathbb Z}
\newcommand{\modl}{\mathop{\mathrm{mod}}\nolimits}
\newcommand{\Hom}{\mathop{\mathrm{Hom}}\nolimits}
\newcommand{\rs}{\mathop{\mathrm{rsupp}}\nolimits}
\newcommand{\ls}{\mathop{\mathrm{lsupp}}\nolimits}
\newcommand{\RHom}{\mathop{\mathrm{RHom}}\nolimits}
\newcommand{\im}{\mathop{\mathrm{Im}}\nolimits}
\newcommand{\Aut}{\mathop{\mathrm{Aut}}\nolimits}
\newcommand{\Ker}{\mathop{\mathrm{Ker}}\nolimits}
\def\kk{\mathrm{k}}
\def\ve{\varepsilon}
\newtheorem*{genericthm*}{\thistheoremname}
\newenvironment{namedthm*}[1]
  {\renewcommand{\thistheoremname}{#1}%
   \begin{genericthm*}}
  {\end{genericthm*}}
\renewcommand{\email}[2][]{%
  \ifx\emails\@empty\relax\else{\g@addto@macro\emails{,\space}}\fi%
  \@ifnotempty{#1}{\g@addto@macro\emails{\textrm{(#1)}\space}}%
  \g@addto@macro\emails{#2}%
}
\def\@evenhead{\hfil\sc A. Nordskova, Y. Volkov \hfil} 
\def\@oddhead{\hfil\sc  Faithful actions of braid groups by spherical twists \hfil} 
\def\@oddfoot{\hfil\sc \thepage \hfil} 
\def\@evenfoot{\hfil\sc \thepage \hfil} 
\def\ps@firstpage{\ps@plain
  \def\@oddfoot{\hfil\sc \thepage \hfil}%

}
\author{Anya Nordskova}
\address{anya.nordskova@gmail.com, L. Euler International Mathematical Institute, Saint Petersburg State University, 14th Line V.O., 29B, St. Petersburg 199178, Russia. }
\author{Yury Volkov}
\address{wolf86\_666@list.ru, Department of Mathematics and Computer Science, Saint Petersburg State University, Universitetskaya nab. 7-9, St. Peterburg, Russia.}
\thanks{\emph{ORCID}: 0000-0003-3592-4273 (first author), 0000-0003-0704-7912 (second author).}
\begin{document}
\title{Faithful actions of braid groups by twists along ADE-configurations of spherical objects}
\maketitle

\begin{abstract}
    We prove that the action of a generalized braid group on an enhanced triangulated category, generated by spherical twist functors along an ADE-configuration of $\omega$-spherical objects, is faithful for any $\omega \in \Z$, $\omega \neq 1$. \end{abstract}

\section{Introduction}

\let\thefootnote\relax\footnotetext{The work was supported by RFBR according to the research project 18-31-20004 and by the President's Program ``Support of Young Russian Scientists'' (project number MK-2262.2019.1). The first author was also supported by ``Native Towns'', a social investment program of PJSC ``Gazprom neft''. The second author was in part supported by Young Russian Mathematics award.\\ \emph{Key words:} Artin groups, spherical twists, triangulated categories, derived Picard groups \\ \emph{2010 Mathematics Subject Classification:} primary 18E30, 20F36, secondary 16E35, 14F05}

The notion of a spherical twist functor was first introduced in \cite{ST} by P. Seidel and R. Thomas in connection with the Kontsevich homological mirror symmetry program. Their original motivation was to look at the autoequivalences of the derived category of coherent sheaves on a variety that would arise as counterparts of generalized Dehn twists via mirror symmetry. In a general setting, a spherical twist is a functor constructed from a spherical object, an object of a triangulated category whose Ext algebra is the same as the cohomology of a sphere.

Spherical twists have proved to be a useful tool in algebraic geometry, representation theory and beyond. In particular, they appear in the contexts of categorifications of Braid groups \cite{R}, Bridgeland stability conditions manifolds \cite{Br}, \cite{HMS} and derived Picard groups \cite{VZ1}. There are several generalizations as well. For instance, in \cite{AL} the notion of a spherical twist along a spherical functor was introduced and in \cite{Seg} it was established that any triangulated autoequivalence is in fact a twist along some spherical functor. The theory of spherical twists constructed from spherical sequences was developed in \cite{Ef}. Groups that can be generated by two spherical twists constructed from spherical sequences were described in \cite{Volk} by the second named author.
 
Let $\Gamma$ be a simply-laced Dynkin diagram. Seidel and Thomas showed that spherical twists along a so-called $\Gamma$-configuration of $\omega$-spherical objects satisfy braid relations of type $\Gamma$ modulo natural isomorphisms, hence induce an action of an Artin group (generalized braid group) $B_\Gamma$ on the triangulated category in question. In the same paper they showed that for $\omega \geq 2$ and $\Gamma = A_n$ this action is faithful. By \cite{Ef} their result can be extended to the case $\omega = 0$. Seidel and Thomas also provided an example when the action is not faithful for $\omega = 1$. Later several more results on the faithfulness appeared. In \cite{BT} C. Brav and H. Thomas proved that the braid group action is faithful for $\omega = 2$ and all $\Gamma$. Recently Y. Qiu and J. Woolf generalised their result to $\omega \geq 2$ for the derived category of a Ginzburg dg-algebra \cite{QW}, which by the intrinsic formality result of A. Hochenegger and A. Krug \cite{HK} may be extended to any algebraic triangulated category, provided that $\omega \geq 4$.

 \bigskip

The main result of our paper is as follows: 
\bigskip

{\bf Theorem 1.} {\it Let $\Gamma$ be a simply-laced Dynkin diagram, $\omega \in \Z$, $\omega \neq 1$ and $\{P_i\}_{i \in \Gamma_0}$ a $\Gamma-$configuration of $\omega-$spherical objects in an enhanced triangulated category $\mathfrak{D}$. Then the action of $B_\Gamma$ on $\mathfrak{D}$ generated by the spherical twists $t_{P_i}$ is faithful. }

\bigskip

The new method we present enables us to prove that the braid group action is faithful for any enhanced triangulated category, all $\Gamma = A_n, D_n, E_6, E_7, E_8$ and all $\omega \neq 1$, almost simultaneously in all cases. Hence, Theorem 1 generalizes all of the existing faithfulness results in this setting. In particular, it covers the case $\omega < 0$ not considered before. Originally, Seidel and Thomas did not define $\omega-$spherical objects with negative $\omega$, but they are also worth considering (for instance, see \cite{HJ}, \cite{CS1}, \cite{CSP}, \cite{CS2}), despite being somewhat more exotic than those of non-negative CY dimension. Among non-positive dimensions, the case $\omega = 0$, which we originally aimed for, having a particular application to derived Picard groups of algebras in mind, happens to be the hardest to tackle.   

The proof we present is based on the following general idea. Let $\Aut(\mathfrak{D})$ be the group of autoequivalences of $\mathfrak{D}$ modulo natural isomorphisms. Let $t_{\alpha}$ be the image of a word $\alpha \neq 1$ in the generalized braid monoid $B_\Gamma^+$ under the homomorphism that sends each generator $s_i$ of $B_\Gamma^+$ to the corresponding spherical twist $t_{P_i}$. Then we show that there is a subset $I$ of the set of vertices of $\Gamma$, depending only on the integer $\omega$ and the object $t_{\alpha}(\bigoplus_{i=1}^n P_i)$, such that for every $j \in I$ the word $\alpha$ is left-divisible by $s_j$ in $B_\Gamma^+$ (see Lemma \ref{princ}  for the precise statement). In other words, for each $\alpha \neq 1$ we learn to recover at least one of its leftmost letters using only information about the object $t_{\alpha}(\bigoplus_{i=1}^n P_i)$. Roughly the same idea already manifested itself in the proof of Brav and Thomas for the case $\omega = 2$ in \cite{BT}. To prove the reconstruction statement in the case $\omega \geq 2$ we simplify and then extend their approach. In particular, our proof does not rely on the existence of the Garside structure.

On the other hand, when $\omega \leq 0$ obtaining the required reconstruction statement turns out to be a much more difficult task. Arguing by contradiction, in this case we inductively construct a presentation of $\alpha$ of a particular form, extracting data from the object $t_{\alpha}(\bigoplus_{i=1}^n P_i)$ (we refer to this process as 'factorization'). This step requires us to develop the theory of so-called two-term objects in a triangulated category with a configuration of spherical objects. When the desired presentation is constructed, the problem that remains lies entirely in the realm of generalized braid groups combinatorics. Namely, we show that applying suitable braid and commutation relations to our presentation, one can indeed make any $s_j$ with $j \in I$ appear as leftmost factors of $\alpha$, as promised in the reconstruction lemma (this step is referred to as 'braiding'). To this end we introduce a way of interpreting words in a braid monoid as decorated sets of vertices of the translation quiver $\Z \Gamma$ of $\Gamma$. In such terms we can formulate and prove a sufficient condition for when a word in a braid monoid is left-divisible by a generator $s_j$, which in particular finishes our proof. The condition itself and the approach we use in this part of the proof are rather general and might be of independent interest.

The structure of the paper is as follows. Section 2 introduces basic definitions and notations. Section 3 contains the main result of the paper as well as the key 'reconstruction lemma' on which its proof is based. The rest of the paper is mainly devoted to proving the key lemma. Section 4 provides a proof in the case $\omega \geq 2$. In Section 5 we outline the proof of the key lemma in the case $\omega \leq 0$ and then provide the details in the next three sections. In the last section one application of the main result of this paper is presented. Namely, we show how it may be used to make the first yet crucial step towards the description of the derived Picard groups of representation-finite selfinjective algebras.  

{\bf Acknowledgements.} We would like to thank Alexandra Zvonareva for inspiring discussions and useful remarks.

\section{Preliminaries}

Throughout this paper $\mathfrak{D}$ is a triangulated category linear over a field $\kk$ and with a fixed enhancement. For example, it can be an algebraic triangulated category in the sense of Keller (see \cite{Kel}), i.e. the stable category of some Frobenius category. In this case $\mathfrak{D}$ is equipped with functorial cones of natural transformations of exact functors and for every object $X$ of $\mathfrak{D}$. Let $D(\kk)$ denote the unbounded derived category of $\kk$-vector spaces. Then there is the derived Hom-complex functor $\RHom(X, -) \colon \mathfrak{D} \to D(\kk)$ and its right adjoint $ - \otimes X \colon D(\kk) \to \mathfrak{D}$. Denote by $\eta$ and $\ve$ respectively the unit and the counit of this adjunction.  

\bigskip Let $\Hom^k(A,B)$ denote $\Hom_{\mathfrak{D}}(A,B[k])$ and $\Hom^*(A,B) = \bigoplus_{k \in \Z} \Hom^k(A,B)$. For elements $g\in \Hom^k(A,B)$ and $f\in \Hom^l(B,C)$, we will write $f\circ g$ or simply $fg$ for the element ${f[k]\circ g\in \Hom^{k+l}(A,C)}$.

\begin{definition}(Seidel, Thomas \cite{ST}) Let $\omega \in \Z$. An object $P \in \mathfrak{D}$ is called {\it $\omega$-spherical} if

 \begin{enumerate}
 \item[(i)] $\dim_{\kk} \Hom^*(X,P) < \infty$ for any object $X \in \mathfrak{D}$.
 \item[(ii)] $\Hom^*(P,P) \cong k[t]/(t^2)$ as graded $\kk-$algebras, where $deg(t) = \omega$.

 \item[(iii)] $$\Hom^*(P,X) \times \Hom^*(X,P) \xrightarrow{\circ} \Hom^{*}(P,P)/{\langle Id_P \rangle}  \cong \kk$$ 
 is a perfect pairing for any $X \in \mathfrak{D}$.

 \end{enumerate}
 

 \end{definition}

Fix some undirected graph $\Gamma$. We will assume that $\Gamma$ is an ADE Dynkin diagram, but part of our arguments can be carried over to more general cases. Following Brav and Thomas \cite{BT}, we now define a $\Gamma$-configuration of spherical objects.

\begin{definition} A collection of $\omega$-spherical objects $\{P_i\}_{i\in \Gamma_0}$ enumerated by vertices of $\Gamma$ is a {\it $\Gamma-$configuration} if for any $i\neq j$

\begin{enumerate}
\item $\Hom^*(P_i, P_j)$ is one-dimensional if $i$ and $j$ are adjacent in $\Gamma$;
\item  $\Hom^*(P_i, P_j)=0$ if $i$ and $j$ are not adjacent in $\Gamma$.
\end{enumerate} \end{definition}

Now we represent the set $\Gamma_0$ of vertices of $\Gamma$ as a disjoint union of two sets $V^0$ and $V^1$ in such a way that each edge of $\Gamma$ has one endpoint in $V^0$ and the other one in $V^1$. Fix some integers $\omega_0$ and $\omega_1$ such that $\omega_0+\omega_1=\omega$. Assume additionally that $\omega_0,\omega_1\le 0$ if $\omega\le 0$ and $\omega_0,\omega_1 \geq 1$ if $\omega\ge 2$.
For example, we can simply take $\omega_0=\omega_1 = \frac{\omega}{2}$ if $\omega$ is even and $\omega_0 = \frac{\omega + 1}{2}$, $\omega_1 = \frac{\omega - 1}{2}$ if  $\omega$ is odd. 
It follows from the definition of an $\omega$-spherical object that we can shift the objects $\{P_i\}_{i \in \Gamma_0}$ of a $\Gamma$-configuration and assume without loss of generality that $\Hom^*(P_i, P_j)$ is concentrated in degree $\omega_u$ whenever $i$ belongs to $V^u$.  The index $u$ in the notations $\omega_u$ and $V^u$ will be always taken modulo $2$.

\begin{definition}\label{twidef}(Seidel, Thomas \cite{ST}) Let $P$ be an $\omega$-spherical object. The {\it spherical twist} functor $t_P$ along $P$ is defined by

 $$t_P(X) = cone(P \otimes \RHom(P, X) \xrightarrow{\ve_X} X) $$      \end{definition}

\begin{remark} $t_P$ is indeed a functor since we have functorial cones of natural transformations of exact functors. \end{remark}

 \begin{definition} The Artin group (generalized braid group) $B_\Gamma$ of type $\Gamma$ is generated by $s_i, i \in \Gamma_0$ subject to  braid relations $s_is_js_i = s_js_is_j$ for $i,j$ adjacent in $\Gamma$ and commutation relations $s_is_j = s_js_i$ for $i,j$ not adjacent in $\Gamma$. The braid monoid $B_\Gamma^+$ is a monoid given by the same generators and relations. \end{definition}
 
 For $\alpha \in B_{\Gamma}^+$ we will denote by $l(\alpha)$ the length of $\alpha$ in terms of the standard generators $s_i$, $i \in \Gamma_0$. 
 
We are now going to recall some crucial facts about spherical twists and spherical objects (see \cite{ST}) that we will actively use throughout the paper.

\begin{enumerate}
\item If $P$ is spherical, then $t_P$ is an autoequivalence of $\mathfrak{D}$ with a quasi inverse $t_P'$ defined by
$$t_P'(X) = cone(X\xrightarrow{\eta_X}P \otimes \RHom(P, X)^* )[-1],$$
where $^*$ denotes the dual with respect to $\kk$.
\item If $P$ is $\omega$-spherical, then $t_P(P) = P[1-\omega]$.
\item For $\{P_i\}_{i \in \Gamma_0}$ forming a $\Gamma$-configuration, the functors $t_{P_i}$ satisfy braid relations of type $\Gamma$ up to a natural isomorphism. In other words, there is a group homomorphism

$$F\colon B_\Gamma \xrightarrow{} \Aut(\mathfrak{D})$$

where $\Aut(\mathfrak{D})$ is the group of autoequivalences of $\mathfrak{D}$ modulo natural isomorphisms. For $\alpha \in B_\Gamma$, we denote $F(\alpha)$ by $t_\alpha$.
\item  $t_P(Q) = Q$ for spherical $P, Q$ not adjacent in their $\Gamma$-configuration (equivalently $\Hom^*(P,Q) = 0$). 
\end{enumerate}

\section{Main result}
We have just defined an action of the braid group $B_\Gamma$ of type $\Gamma$ on the category $\mathfrak{D}$. The main result of this paper says that this action is faithful for every integer $\omega \neq 1$. The rest of this paper is mainly devoted to proving this claim:

\begin{theorem}\label{main_thm} Let $\Gamma$ be a simply-laced Dynkin diagram, $\omega \in \Z$, $\omega \neq 1$ and $\{P_i\}_{i \in \Gamma_0}$ a $\Gamma-$configuration of $\omega-$spherical objects in an enhanced triangulated category $\mathfrak{D}$. Then the action of $B_\Gamma$ on $\mathfrak{D}$ generated by the spherical twists $t_{P_i}$ is faithful. \end{theorem}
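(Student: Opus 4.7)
The goal is to prove that the homomorphism $F\colon B_\Gamma \to \Aut(\mathfrak{D})$ is injective, and the plan, as sketched in the introduction, is to reduce this to a reconstruction principle for positive braids and then establish that principle.

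\textbf{Reduction and induction.} Every element of $B_\Gamma$ can be written as $\gamma^{-1}\delta$ with $\gamma,\delta\in B_\Gamma^+$, and $F(\gamma^{-1}\delta)=1$ amounts to $t_\gamma\cong t_\delta$ in $\Aut(\mathfrak{D})$. Since $B_\Gamma^+$ embeds into $B_\Gamma$, it therefore suffices to show that $t_\gamma\cong t_\delta$ forces $\gamma=\delta$ in $B_\Gamma^+$. I would argue by induction on $l(\gamma)+l(\delta)$: if both $\gamma$ and $\delta$ are non-trivial, the reconstruction lemma (Lemma \ref{princ}) yields a non-empty set $I(X,\omega)\subseteq\Gamma_0$ of leftmost generators depending only on $X:=t_\gamma(\bigoplus_i P_i)\cong t_\delta(\bigoplus_i P_i)$, so picking any $j\in I(X,\omega)$ one writes $\gamma=s_j\gamma'$ and $\delta=s_j\delta'$, passes to $t_{\gamma'}\cong t_{\delta'}$ and invokes induction. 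The remaining case $\gamma=1$, $\delta\neq 1$ reduces to showing that no non-trivial positive braid acts as the identity, which is obtained by iterating the reconstruction lemma on $\delta$ to produce a canonical factorization $\delta = s_{j_1}\cdots s_{j_N}$ and then comparing the resulting action against $\mathrm{Id}$ via $t_{P_j}(P_j)=P_j[1-\omega]$, a comparison that fails only when $\omega=1$ (which is precisely where the hypothesis enters).

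\textbf{Proving the reconstruction lemma: positive CY dimension.} For $\omega\geq 2$ the plan is to read off a leftmost generator $s_j$ directly from Ext-degree considerations in $\Hom^*(P_i, t_\alpha(\bigoplus_i P_i))$. With the hypothesis $\omega_0,\omega_1\geq 1$ the relevant graded pieces separate cleanly, and I would streamline the Brav--Thomas argument, removing entirely the reliance on the Garside combinatorics of $B_\Gamma^+$ by working directly with spherical twist triangles.

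\textbf{The main obstacle: $\omega\leq 0$.} Here the Ext-degrees collapse and the direct recovery of a leftmost generator becomes impossible. My plan follows the two-step pattern flagged in the introduction. The \emph{factorization} step develops the theory of two-term objects in $\mathfrak{D}$ relative to the configuration $\{P_i\}$, and uses it to extract from $t_\alpha(\bigoplus_i P_i)$ a distinguished presentation of $\alpha\in B_\Gamma^+$ as a positive word. The \emph{braiding} step is then purely combinatorial: encoding positive words as decorated subsets of the translation quiver $\Z\Gamma$, I would prove a sufficient condition under which such a word can be brought, by braid and commutation relations alone, to a form beginning with a prescribed $s_j\in I$, and then verify that the factorization produced in the previous step meets this criterion. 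Setting up the two-term theory carefully enough that factorization can always be performed, and formulating and proving the combinatorial criterion in $\Z\Gamma$---especially in the delicate borderline case $\omega=0$ singled out in the introduction---is the central obstacle of the whole argument.
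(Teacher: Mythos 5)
Your proposal follows essentially the same route as the paper: reduction of faithfulness to injectivity of the monoid homomorphism $B_\Gamma^+\to\Aut(\mathfrak{D})$, induction on total length by stripping a common leftmost generator supplied by the reconstruction lemma (Lemma \ref{princ}), and the same two-pronged proof of that lemma (a streamlined Brav--Thomas degree argument for $\omega\ge 2$; factorization via two-term objects followed by mesh braiding in $\Z\Gamma$ for $\omega\le 0$). The only point where you are vaguer than the paper is the base case $\gamma=1$, $\delta\neq 1$: the paper settles it not by refactoring $\delta$ but by the quantitative degree lemmas (Lemmas \ref{tech0} and \ref{tech2}), showing that any nontrivial positive twist strictly decreases $min$ (resp.\ increases $max$) of $\Lambda$, which is the precise form of your appeal to $t_P(P)=P[1-\omega]$.
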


Recall that for $\omega = 1$ there are examples when the action is not faithful (see \cite{ST}). 
 
 \bigskip

We will follow the same general strategy to prove Theorem \ref{main_thm} in the two cases $\omega\le 0$ and $\omega\ge 2$. Nevertheless, the case $\omega \geq 2$ turns out to be much easier and will be finished by the end of the next section. 

\bigskip 

By $N(A)$ for $A \subseteq \Gamma_0$ we denote the set of all neighbors of all vertices in $A$. Since the case $|\Gamma_0|=1$ is clear, we will assume hereafter that $\Gamma$ has at least two vertices. We will use the following auxiliary notation in our proof.
For each $i\in V^u$ and $j\in N(i)$, we fix some generator of $\Hom^{\omega_u}(P_i,P_j)$ and denote it by $\gamma_{i,j}$.
We also introduce the morphisms $\rho_{i,j} \colon P_j\rightarrow t_iP_j$ and $\xi_{i,j} \colon t_iP_j\rightarrow P_i[1-\omega_u]$ via the triangle from the Definition \ref{twidef}
$$
P_i[-\omega_u]\xrightarrow{\gamma_{i,j}[-\omega_u]}P_j\xrightarrow{\rho_{i,j}}t_iP_j\xrightarrow{\xi_{i,j}}P_i[1-\omega_u].
$$


   Let $\Lambda = \bigoplus\limits_{i=1}^n P_i$. We will also write $t_i$ instead of $t_{P_i}$ for brevity.

   We define {\it the minimal} and {\it the maximal nonzero degree} of an object in $\mathfrak{D}$:

 \begin{definition} Let $T \in \mathfrak{D}$. We say that $min(T)$ is {\it the minimal  nonzero degree} of $T$  if  
 $$ min(T) = \min\{k \in \Z | \Hom^{k+\omega}(\Lambda, T) \neq 0\}. $$
 
Analogously, $max(T)$ is {\it the maximal nonzero degree} of $T$ if 
 
 $$max(T) = \max \{ k\in \Z | \Hom^{k+\omega}(\Lambda, T) \neq 0 \}$$

Note that since $\Lambda$ is a direct sum of spherical objects, $min(T)$ and $max(T)$ always exist.

Since in general $T \in \mathfrak{D}$ is an object of an abstract triangulated category rather than a complex, we cannot speak of its components $T_r$ for $r \in \Z$, say, as of objects of some abelian category. We can, however, define what it means for a spherical object $P_j$ to be {\it a direct summand of $T_r$} (we will use the notation $T_r$ only as a part of this expression). 
\end{definition}

   \begin{definition}\label{dirsum} Let $T$ be an object of $\mathfrak{D}$. We say that $P_j$ with $j\in \Gamma_0$ is {\it a direct summand of $T_{r}$} if there exists a nonzero $f \in \Hom^{r+\omega}(P_j, T)$ such that $f\gamma_{k,j} = 0$ for any $k \in N(j)$. A morphism $f$ satisfying this condition will be referred to as {\it long}. In other words, a nonzero morphism $f\colon P_j\rightarrow T$ is called long if the induced morphism $\Hom^*(P_k,f) \colon \Hom^*(P_k,P_j)\rightarrow \Hom^*(P_k,T)$ is zero for any $k\not=j$. We also say that $P_j$ is {\it a direct summand of $T_{[a,b]}$} if $P_j$ is a direct summand of $T_{c}$ for some $c \in [a,b]$. \end{definition}

 \begin{remark} When $\omega = 0$ and $\mathfrak{D}$ is the bounded derived category of a basic finite-dimensional algebra $\Lambda$ (see Section 9), a long morphism $f \in \Hom^r(P_j,T)$ is simply a morphism induced by a socle morphism $P_j \xrightarrow{} P_j$. \end{remark}
   \begin{remark} Under some conditions, checking whether $P_i$ is a direct summand of $T_r$ is easier. More precisely, let $T \in \mathfrak{D}$ and $i\in V^u$.
   If ${\Hom^{r+\omega}(P_i,T)\not=0}$ for some $r\le min(T)-\omega_{u+1}-1$, then $P_i$ is a direct summand of $T_{r}$. Indeed, any composition of the form ${P_j\rightarrow P_i[\omega_{u+1}]\rightarrow T[r+\omega+\omega_{u+1}]}$ is zero by the definition of the minimal nonzero degree.
   Moreover, if ${\Hom^{min(T)+\omega}(P_j,T) = 0}$ for every $j \in N(i)$  and $\Hom^{min(T)+\omega-\omega_{u+1}}(P_i,T)\not=0$, then $P_i$ is a direct summand of $T_{min(T)-\omega_{u+1}}$, because in this case any composition of the form $P_j\rightarrow P_i[\omega_{u+1}]\rightarrow T[min(T)+\omega]$ is zero too. 
Analogously, if $\Hom^{r+\omega}(P_i,T)\not=0$ for some $i\in V^u$ and $r\ge max(T)-\omega_{u+1}+1$, then $P_i$ is a direct summand of $T_{r}$. 
\end{remark}

  We denote $t_\alpha(\Lambda)$ by $T_\alpha$. Let $min(\alpha)$ and $max(\alpha)$ denote the minimal and the maximal nonzero degrees of $T_{\alpha}$ respectively. Following \cite{BT}, we will deduce the faithfulness of the braid group action from the injectivity of the induced monoid homomorphism ${B^+_{\Gamma} \xrightarrow{} \Aut(\mathfrak{D})}$. In turn, to prove that the monoid homomorphism is injective, we require a tool that would allow us to recover a leftmost factor of a reduced expression of $\alpha\in B_\Gamma^+$ using only information about $T_\alpha$. This tool is presented in the following key lemma.

\bigskip

\begin{lemma}\label{princ} Let $\alpha \in B^+_\Gamma$, $\alpha \neq 1$. For $u \in \{0,1\}$ let $I_{u,\alpha}=[min(\alpha),min(\alpha)-\omega_{u+1}]$ in the case $\omega\le 0$ and $I_{u,\alpha}=[max(\alpha)-\omega_{u+1}+1,max(\alpha)]$ in the case $\omega\ge 2$. Then for any $u\in\{0,1\}$ and any $j\in V^u$ such that the corresponding object $P_j$ is a direct summand of $(T_\alpha)_{I_{u,\alpha}}$, the word $\alpha$ can be written as
$$\alpha = s_j \alpha'$$
for some $\alpha' \in B^+_\Gamma$ with $l(\alpha) = l(\alpha') + 1$.   \end{lemma}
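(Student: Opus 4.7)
My plan is to prove the lemma by induction on $l(\alpha)$, reducing to an analysis of how the extremal-degree summands of $T_\alpha$ change under a single spherical twist. The base case $l(\alpha)=1$ is immediate: if $\alpha=s_i$, then $T_\alpha=t_i(\Lambda)$ can be read off directly from the triangles defining $t_i$ on each $P_k$, and one checks that the hypothesis on $(T_\alpha)_{I_{u,\alpha}}$ forces $j=i$. For the inductive step, I would pick a reduced factorization $\alpha=s_i\beta$ and try to relate the hypothesis on $T_\alpha$ to a corresponding hypothesis on $T_\beta$, then apply the induction hypothesis to $\beta$ to obtain $s_j\preceq\beta$ (left-divisibility in $B_\Gamma^+$), and finally use the braid/commutation relations to slide $s_j$ to the leftmost position of $\alpha$.

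The case analysis proceeds by the relation between $i$ and $j$. If $i=j$, we are done. If $i$ and $j$ are non-adjacent, then $t_iP_j=P_j$ and the defining triangle
\[
P_i\otimes\RHom(P_i,T_\beta)\to T_\beta\to T_\alpha
\]
shows, after applying $\Hom^{*}(P_j,-)$, that $\Hom^{*}(P_j,T_\alpha)\cong\Hom^{*}(P_j,T_\beta)$, so the summand hypothesis transfers verbatim; induction gives $s_j\preceq\beta$, and since $s_is_j=s_js_i$ we conclude $s_j\preceq\alpha$. If $i$ and $j$ are adjacent, the transfer is more delicate: the long-exact-sequence computation of $\Hom^{*}(P_j,T_\alpha)$ mixes in contributions from $\gamma_{i,j}$-multiplication, and one must identify the position where the would-be long morphism of $T_\alpha$ forces a corresponding long morphism into $T_\beta$ at a shifted degree. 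This is where the choice of the window $I_{u,\alpha}$ is designed to match the $\omega_u$-shift introduced by the adjacency; after transferring, induction yields that $s_j$ is a left divisor of some modification of $\beta$ and the braid relation $s_is_js_i=s_js_is_j$ is used to bring $s_j$ to the front.

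The main obstacle I foresee is controlling the shift of the extremal nonzero degree under a single twist. For $\omega\ge 2$ the picture is clean: applying $t_i$ to an object with nontrivial $\Hom^{*}(P_i,-)$ strictly enlarges the maximal nonzero degree in a controlled way, which neatly matches the prescribed window $I_{u,\alpha}=[max(\alpha)-\omega_{u+1}+1,max(\alpha)]$, so the inductive transfer works without further machinery. For $\omega\le 0$ the analogous shift on $min$ is much more subtle: the minimum degree can stay constant across a twist, so one cannot simply identify the leftmost generator from a single-step degree change and the naive induction stalls. I expect the resolution to require the additional theory the authors advertise in the introduction, namely a factorization into \emph{two-term objects} (extracting structured morphisms from $T_\alpha$) together with a combinatorial \emph{braiding} argument on the translation quiver $\Z\Gamma$ which certifies, on purely braid-monoid grounds, that the data we extracted force $s_j$ to be a left divisor. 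My plan is therefore to handle $\omega\ge 2$ via the direct induction sketched above, and to accept that $\omega\le 0$ requires building out this two-term/$\Z\Gamma$ framework; in both cases the common skeleton remains the transfer of the summand hypothesis through the spherical-twist triangle followed by a braid-monoid combinatorial step.
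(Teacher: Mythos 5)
Your skeleton --- peel off a leftmost letter from $\alpha=s_i\beta$, transfer the summand hypothesis through the twist triangle, apply induction to $\beta$, then use braid/commutation relations --- is indeed the paper's strategy (run there as a minimal-counterexample argument, which is a cosmetic difference), and the non-adjacent case plus the reduction to $j\in N(i)$ go exactly as you say. But the adjacent case, which is the entire content of the lemma, is left as a placeholder, and the missing step is not a routine ``transfer''. Once induction gives $\beta=s_j\gamma$, you have $\alpha=s_is_j\gamma$ with $i,j$ adjacent; to conclude that $s_j$ left-divides $\alpha$ you must exhibit $s_i$ as a left divisor of $\gamma$, so that $\alpha=s_is_js_i\gamma'=s_js_is_j\gamma'$. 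For $\omega\ge 2$ the paper gets this from $\Hom^{h-\omega_1+1+\omega}(P_i,T_\gamma)\cong\Hom^{h+\omega}(P_j,T_\alpha)\neq 0$, but since the degree $h-\omega_1+1$ need not lie in the window $[h_\gamma-\omega_1+1,h_\gamma]$ when $\omega>2$, it must first prove a vanishing statement ($\Hom^{r+\omega}(P_k,T_\gamma)=0$ for all $k\in V^1$, $r>h$) and then strip off a whole set $\Delta\subset V^0$ of left divisors of $\gamma$ until the maximal nonzero degree drops to $\le h$, after which induction applies to the remainder and produces $s_i$ whether or not $i\in\Delta$. None of this is in your sketch, and your claim that for $\omega\ge2$ ``the inductive transfer works without further machinery'' is accurate only for $\omega=2$, where $h=h_\alpha$ automatically and these steps collapse.

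For $\omega\le 0$ you explicitly defer to the two-term-object theory and the $\Z\Gamma$ mesh-braiding combinatorics without constructing either; that is the bulk of the paper (the factorization of $\alpha$ into slices $s_{\Delta_0}\cdots s_{\Delta_p}s_y$ controlled by the multiplicities $\chi_u$, and the proof that mesh relations force a left divisor $s_j$ with $j\neq i$). Acknowledging that this machinery is needed is not the same as supplying it, so in both regimes the proposal stops exactly where the lemma is actually proved.
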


  Before proving Lemma \ref{princ} we will to show how it easily implies our main result. Even before that, however, we are going to introduce and prove some rather technical facts regarding the way spherical twists affect the minimal and the maximal degree of an object as well as its direct summands in the sense of Definition \ref{dirsum}. These statements will be extensively used throughout the paper. Although here the two cases $\omega\le 0$ and $\omega \ge 2$ can be unified, we are going to consider them separately to avoid giving the impression that the case $\omega\ge 2$ is more complicated than it really is.

\begin{lemma}\label{tech0} Let $\omega\le 0$ and let $T$ be an object of $\mathfrak{D}$. Let $m$ be the minimal nonzero degree of $T$.
\begin{enumerate}
\item If $\Hom^r(P_i,t_i^{-1}T) \neq 0$, then $m\le r-1$.
\item For $k\in V^u$, $r\le m-\omega_{u+1}$ and $i\not=k$, $P_k$ is a direct summand of $(t_i^{-1}T)_{r}$ if and only if $P_k$ is a direct summand of $T_{r}$.
\item The minimal nonzero degree of $t_iT$ belongs to ${[m-1+\omega,m]}$. Moreover, $P_k$ can be a direct summand of $(t_iT)_{r}$ with $r<m$ only if $k=i$.
\end{enumerate}
\end{lemma}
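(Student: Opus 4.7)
The plan is to derive all three parts by applying $\Hom^\ast(P_k,-)$ to the two functorial triangles
$$P_i \otimes \Hom^\ast(P_i, t_i^{-1}T) \to t_i^{-1}T \xrightarrow{\pi} T \to (P_i \otimes \Hom^\ast(P_i, t_i^{-1}T))[1]$$
and
$$P_i \otimes \Hom^\ast(P_i, T) \to T \to t_iT \to (P_i \otimes \Hom^\ast(P_i, T))[1],$$
(the first obtained by applying the defining triangle of $t_i$ to $t_i^{-1}T$, the second directly) and reading off the resulting long exact sequences. The key input throughout will be that $\Hom^\ast(P_i, P_i) \cong \kk \oplus \kk[-\omega]$ makes the multiplication map $\Hom^\ast(P_i, P_i \otimes V) \to \Hom^\ast(P_i, V)$ surjective via its identity component.

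For part (1), I will take $P_k = P_i$ in the first LES; surjectivity of the multiplication collapses it to the natural isomorphism $\Hom^r(P_i, t_i^{-1}T) \cong \Hom^{r-1+\omega}(P_i, T)$, from which nonvanishing of the left side immediately forces $m \le r - 1$.

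For part (2), I plan to use (1) together with a direct degree count (relying on $\omega_u,\omega_{u+1} \le 0$) to check that $\Hom^{r+\omega}(P_k, P_i \otimes \Hom^\ast(P_i, t_i^{-1}T)) = 0$, hence $\pi_\ast \colon \Hom^{r+\omega}(P_k, t_i^{-1}T) \hookrightarrow \Hom^{r+\omega}(P_k, T)$ is injective. The direction ``$g$ long implies $\pi g$ long'' is then immediate. For the converse, a long $f \in \Hom^{r+\omega}(P_k, T)$ satisfies $f \gamma_{i,k} = 0$ (vacuously when $i \notin N(k)$, by long-ness otherwise), so $f$ factors as $\tilde f \circ \rho_{i,k}$ through the triangle defining $t_iP_k$. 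Setting $g := t_i^{-1}\tilde f$ and using that $\pi_{t_iP_k} = \rho_{i,k}$ (the two triangles presenting $t_iP_k$ and $t_i(t_i^{-1}(t_iP_k))$ literally coincide) together with naturality of $\pi$, I get $\pi g = f$; long-ness of $g$ reduces, for each $j \in N(k)$, to the same injectivity applied to $\Hom^\ast(P_j,-)$ in the relevant degree.

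For part (3), the analogous reading of the second triangle gives $\Hom^s(P_i, t_iT) \cong \Hom^{s+1-\omega}(P_i, T)$ for $k = i$, $\Hom^s(P_k, t_iT) \cong \Hom^s(P_k, T)$ for $k \notin N(i) \cup \{i\}$, and a LES with error terms $\gamma_{k,i} \otimes \Hom^\ast(P_i, T)$ for $k \in N(i)$. The bound $\min(t_iT) \ge m-1+\omega$ follows from a routine degree count in each of these three cases; the bound $\min(t_iT) \le m$ by transferring a class realising $\min(T) = m$, with the sub-case $k \in N(i)$ requiring that either the LES cokernel supplies the class directly, or surjectivity of the multiplication-by-$\gamma_{k,i}$ map forces $\Hom^{m-\omega_v+\omega}(P_i, T) \ne 0$ and produces a class via the $k = i$ identification. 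The hard part will be the summand claim for $k \in N(i)$: a long nonzero $g \in \Hom^{r+\omega}(P_k, t_iT)$ has nonzero connecting image $\zeta_\ast(g) = \gamma_{k,i} \otimes u$, and I plan to use the relation $\gamma_{k,i}\gamma_{i,k} \equiv t \pmod{\kk \cdot \mathrm{id}_{P_i}}$ (with $t$ the nilpotent generator of $\Hom^\omega(P_i, P_i)$) to compute $\zeta_\ast(g \gamma_{i,k}) = (0, u)$ in the decomposition $\Hom^\ast(P_i, P_i \otimes \Hom^\ast(P_i, T)) \cong \Hom^\ast(P_i, T) \oplus t \cdot \Hom^\ast(P_i, T)$, so that $g \gamma_{i,k} = 0$ (by long-ness) combined with injectivity of $\zeta_\ast$ on $\Hom^\ast(P_i, t_iT)$ forces $u = 0$, contradicting $g \ne 0$. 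This is the only step where the full algebra structure of $\Hom^\ast(P_i, P_i)$, rather than degree bookkeeping alone, enters essentially.
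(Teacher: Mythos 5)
Your overall plan is sound and, despite the different packaging, runs structurally parallel to the paper's proof: the same case analysis on the position of $k$ relative to $i$, and the same degree estimates. Where the paper works with the triangles $P_i[-\omega_v]\xrightarrow{\gamma_{i,l}}P_l\xrightarrow{\rho_{i,l}}t_iP_l\xrightarrow{\xi_{i,l}}P_i[1-\omega_v]$ and transfers long morphisms via $f\mapsto (t_if)\rho_{i,k}$, you use the evaluation triangles $P_i\otimes\RHom(P_i,X)\to X\to t_iX$ and the map $\pi_*$; by naturality of $\rho$ these are literally the same correspondence, so parts (1) and (2) are essentially the paper's argument in different notation. The one genuinely different ingredient is your treatment of the summand claim in part (3) for $k\in N(i)$: you identify the connecting image $\zeta_*(g)=\gamma_{k,i}\otimes u$ and exploit the ring relation $\gamma_{k,i}\gamma_{i,k}\equiv t\pmod{\kk\cdot\mathrm{id}}$ (forced by the perfect pairing in the definition of a spherical object) to kill $u$, whereas the paper instead factors $t_i^{-1}f$ through $P_k$ via the triangle for $t_i^{-1}P_k$ and contradicts the minimality of $m$ directly. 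Both work; yours uses the multiplicative structure of $\Hom^*(P_i,P_i)$ where the paper only uses degree bookkeeping, but it is arguably more transparent about \emph{why} longness against $\gamma_{i,k}$ is the decisive hypothesis.

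There is one concrete flaw in part (2). In the converse direction you claim that longness of $g=t_i^{-1}\tilde f$ ``reduces, for each $j\in N(k)$, to the same injectivity applied to $\Hom^*(P_j,-)$.'' This fails for the neighbour $j=i$ (which is present exactly in the case $k\in N(i)$ you must handle): the map $\pi_*\colon\Hom^*(P_i,t_i^{-1}T)\to\Hom^*(P_i,T)$ is not injective --- it is identically zero, because $\ve_*\colon\Hom^*(P_i,P_i\otimes\RHom(P_i,t_i^{-1}T))\to\Hom^*(P_i,t_i^{-1}T)$ is surjective via the identity component (this surjectivity is precisely the mechanism behind $t_iP_i=P_i[1-\omega]$, which you yourself invoke in part (1)). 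So knowing $\pi(g\gamma_{i,k})=f\gamma_{i,k}=0$ gives no information about $g\gamma_{i,k}$. The conclusion $g\gamma_{i,k}=0$ is nevertheless true, but for a different reason: the entire group $\Hom^{r+\omega+\omega_{u+1}}(P_i,t_i^{-1}T)$ vanishes, since by part (1) its nonvanishing would force $m\le r+\omega+\omega_{u+1}-1\le m+\omega-1<m$ (using $r\le m-\omega_{u+1}$ and $\omega\le 0$). This is exactly how the paper disposes of that term, and with this one-line repair your argument goes through.
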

\begin{proof} \begin{enumerate} 
\item As spherical twists are autoequivalences, we have $$0 \neq \Hom^r(P_i, t_i^{-1}T) \cong \Hom^r(t_iP_i, T)=  \Hom^r(P_i[1-\omega], T)\cong \Hom^{r-1+\omega}(P_i, T).$$
Hence, the minimal nonzero degree $m$ of $T$ is not greater than $r-1$. 

\item Suppose that $k \notin N(i)$. Then $t_iP_k=P_k$ and since $t_i$ is an autoequivalence, we have an isomorphism $t_i \colon \Hom^{r+\omega}(P_k, t_i^{-1}T)\cong \Hom^{r+\omega}(P_k, T)$. It is sufficient to show that $f\colon P_k\rightarrow t_i^{-1}T[r+\omega]$ is long if and only if $t_if$ is long. Pick some $l\in N(k)$. If $l\not\in N(i)$, then $t_iP_l=P_l$ and we have
$\im\Hom^*(P_l,t_if)=t_i\im\Hom^*(P_l,f)$. Since $t_i$ is an autoequivalence, we have $\Hom^*(P_l,t_if)=0$ if and only if $\Hom^*(P_l,f)=0$.

If $l\in N(i)$, then $i\in V^u$ and we have the following triangle
$$P_l\xrightarrow{\rho_{i,l}}t_iP_l\xrightarrow{\xi_{i,l}} P_i[1-\omega_u].$$
Since $\Hom^*(P_i,P_k)=0$, we have $\gamma_{l,k}=g\rho_{i,l}$ for some $g\in\Hom^{\omega_{u+1}}(t_iP_l,P_k)$. Hence, if $(t_if)\gamma_{l,k}\not=0$, then $f(t_i^{-1}g)\not=0$, and $\Hom^*(P_l,f)\not=0$. On the other hand, if $f\gamma_{l,k}\not=0$, then the composition 
$$P_l\xrightarrow{\rho_{i,l}}t_iP_l\xrightarrow{t_i(f\gamma_{l,k})} T[r+\omega+\omega_{u+1}]$$
 is nonzero, because ${r+\omega-1\le m+\omega_u-1<m}$. Thus, $${\Hom^{r+\omega+\omega_{u+1}}(P_i[1-\omega_u], T)\cong\Hom^{r+2\omega-1}(P_i, T)=0}.$$
 Then we have ${\Hom^*(P_l,t_if)\not=0}$. We conclude that indeed $f$ is long if and only if $t_if$ is long.

Now suppose that $k \in N(i)$.  Consider the triangle
\begin{equation}\label{tri01}
P_k \xrightarrow{\rho_{i,k}} t_iP_k \xrightarrow{\xi_{i,k}} P_i[1-\omega_{u+1}]
\end{equation}
Since the minimal nonzero degree of $T$ is $m$ and ${r+\omega_{u+1}-1 \leq m-1}$, we have $\Hom^{r+\omega}(P_i[1-\omega_{u+1}], T) = 0$.  Hence, for any nonzero $f \colon P_k\rightarrow t_i^{-1}T[r+\omega]$ one has $t_if\rho_{i,k}\not=0$. Since $(t_if)\rho_{i,k}\gamma_{i,k}[-\omega_{u+1}]=0$, it remains to show that $(t_if)\rho_{i,k}\gamma_{l,k}[-\omega_{u+1}]=0$ for $l \in N(k) \setminus \{i\}$
if $f$ is long.  Indeed, suppose it is nonzero and apply $t_i^{-1}$. Since $l \notin N(i)$, we get a nonzero morphism $P_l[-\omega_{u+1}]\xrightarrow{} P_k \xrightarrow{f} t_i^{-1}T[r+\omega]$, which contradicts $f$ being a long morphism.

Now pick a long morphism ${f' \colon P_k\rightarrow T[r+\omega]}$. Then ${f'\gamma_{i,k}[-\omega_{u+1}]=0}$ by the definition of a long morphism, and hence ${f'=(t_i f)\rho_{i,k}}$ for some ${f \colon P_k\rightarrow t_i^{-1}T[r+\omega]}$.
We have ${0=f\gamma_{i,k} \colon P_i \rightarrow t_i^{-1}T[r+\omega+\omega_{u+1}]}$, because otherwise the minimal nonzero degree of $T$ would be not greater than ${r+\omega+\omega_{u+1}-1\le m+\omega-1<m}$ by the first assertion of the current lemma. Pick some $l \in N(k) \setminus \{i\}$. 
Since $\Hom^*(P_l,P_i)=0$, the morphism $t_i\gamma_{l,k}:P_l\rightarrow t_iP_k[\omega_{u+1}]$ factors through $\rho_{i,k}[\omega_{u+1}]$. Therefore, $t_i\gamma_{l,k}$ equals $\rho_{i,k}\gamma_{l,k}$ modulo a nonzero scalar factor. Since $(t_i f)\rho_{i,k}$ is long, one has $(t_i f)\rho_{i,k}\gamma_{l,k}=0$. Applying $t_i^{-1}$, we see that $f\gamma_{l,k}=0$ as well. Thus, there exists a long morphism from $P_k$ to $T[r+\omega]$ if and only if there exists a long morphism from $P_k$ to $t_i^{-1}T[r+\omega]$. 

\item First we show that the minimal nonzero degree of $t_iT$ is not greater than $m$. There exists some $k\in\Gamma_0$ such that $P_k$ is a direct summand of $T_{m}$. If $k\not=i$ and the minimal nonzero degree of $t_iT$ is greater than $m$, then $P_k$ is a direct summand of $(t_iT)_{m}$ by the second assertion of this lemma and we get a contradiction. If $k=i$, then $\Hom^{m+\omega}(P_i,T)\not=0$ and the minimal nonzero degree of $t_iT$ is not greater than $m+\omega-1<m$ by the first assertion of this lemma. Thus, the minimal nonzero degree of $t_iT$ is indeed not greater than $m$.

Now suppose $P_k$ is a direct summand of $(t_iT)_{r}$ for $r<m$, $k \not=i$. If $k\notin N(i)$, then $0\neq \Hom^{r+\omega}(P_k, t_iT) \cong \Hom^{r+\omega}(P_k, T)$, which contradicts the minimal nonzero degree of $T$ being $m$. If $k \in N(i)$, then there exists a long morphism $ P_k \xrightarrow{f} t_iT[r+\omega]$, in particular, $f\gamma_{i,k}=0$. Consider the triangle
$$P_{i}[\omega_{u}-1] \xrightarrow{t_i^{-1}\gamma_{i,k}[-\omega_{u+1}]} t_i^{-1}P_k \xrightarrow{t_i^{-1}\rho_{i,k}} P_k. $$
Since $ t_i^{-1}(f\gamma_{i,k}[-\omega_{u+1}])=0$, the morphism $t_i^{-1}f$ factors through $P_k$. Thus, the minimal nonzero degree of $T$ is not greater than $r < m$, but this is impossible.

 Now it follows from what we have already established that if the minimal nonzero degree of $t_iT$ equals $d<m$, then ${\Hom^{d+\omega}(P_i,t_iT) \neq 0}$.
Applying $t_i^{-1}$, we get $\Hom^{d+1}(P_i,T) \neq 0$, and hence $d \ge  m-1+\omega$.
\end{enumerate}
\end{proof}

\begin{lemma}\label{tech2} Let $\omega\ge 2$, let $T$ be an object of $\mathfrak{D}$ and let $h$ be its maximal nonzero degree.
\begin{enumerate}
\item If $\Hom^r(P_i,t_i^{-1}T) \neq 0$, then  $h\ge r-1$.

\item Suppose that $k\in \Gamma_0\setminus\big(\{i\}\cup N(i)\big)$. Then $\Hom^{r+\omega}(P_k,t_i^{-1}T)\not=0$ if and only if $\Hom^{r+\omega}(P_k,T)\not=0$.

\item Suppose that $r\ge h-\omega_{u+1}+2$ and $k\in V^u$, $k\not=i$. If $\Hom^{r+\omega}(P_k,t_i^{-1}T)\not=0$, then $\Hom^{r+\omega}(P_k,T)\not=0$.

\item Suppose that $r\ge h-\omega_{u+1}+1$ and $k\in V^u$, $k\not=i$. If $\Hom^{r+\omega}(P_k,T)\not=0$, then $\Hom^{r+\omega}(P_k,t_i^{-1}T)\not=0$.

\item The maximal nonzero degree of $t_iT$ is not less than $h$.
\end{enumerate}
\end{lemma}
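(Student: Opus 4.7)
My plan is to mirror the proof of Lemma~\ref{tech0} part-by-part, replacing the minimal nonzero degree $m$ by the maximal nonzero degree $h$ and flipping the relevant inequalities. The flip is legitimate because here $\omega_u,\omega_{u+1}\ge 1$, whereas in Lemma~\ref{tech0} one has $\omega_u,\omega_{u+1}\le 0$. Part~(1) is an immediate adjunction calculation: since $t_iP_i=P_i[1-\omega]$, one has $\Hom^r(P_i,t_i^{-1}T)\cong\Hom^{r+\omega-1}(P_i,T)$, and nonvanishing of the right-hand side forces the exponent $r-1$ (written as $(r-1)+\omega$) to be bounded above by $h$, as claimed (and in fact one gets the stronger $h\ge r+\omega-1$).

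For parts~(2)--(4) I would rewrite $\Hom^{r+\omega}(P_k,t_i^{-1}T)\cong\Hom^{r+\omega}(t_iP_k,T)$ and apply $\Hom(-,T)$ to the defining triangle
\[P_i[-\omega_{u+1}]\xrightarrow{\gamma_{i,k}[-\omega_{u+1}]}P_k\xrightarrow{\rho_{i,k}}t_iP_k\xrightarrow{\xi_{i,k}}P_i[1-\omega_{u+1}],\]
where $k\in V^u$ and, in the nontrivial case $k\in N(i)$, necessarily $i\in V^{u+1}$. When $k\notin N(i)\cup\{i\}$ one has $t_iP_k=P_k$ and part~(2) is immediate. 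In the remaining case the resulting four-term exact fragment
\[\Hom^{r+\omega+\omega_{u+1}-1}(P_i,T)\to\Hom^{r+\omega}(P_k,t_i^{-1}T)\to\Hom^{r+\omega}(P_k,T)\to\Hom^{r+\omega+\omega_{u+1}}(P_i,T)\]
controls both parts: the leftmost term vanishes when $r+\omega_{u+1}-1>h$, i.e.\ $r\ge h-\omega_{u+1}+2$, which gives the injectivity needed for part~(3); the rightmost term vanishes when $r+\omega_{u+1}>h$, i.e.\ $r\ge h-\omega_{u+1}+1$, making the middle map surjective onto any given nonzero class and yielding part~(4).

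For part~(5), pick $k\in\Gamma_0$ with $\Hom^{h+\omega}(P_k,T)\ne 0$ and let $h'$ denote the maximal nonzero degree of $t_iT$. If $k=i$, apply part~(1) with $T$ replaced by $t_iT$ and $r=h+\omega$: since $t_i^{-1}(t_iT)=T$, one immediately gets $h'\ge h+\omega-1\ge h+1$. If instead $k\in V^u$ with $k\ne i$, assume for contradiction that $h'<h$; then $h\ge h'+1\ge h'-\omega_{u+1}+2$ because $\omega_{u+1}\ge 1$, so part~(3) applied with $T$ replaced by $t_iT$ and $r=h$ forces $\Hom^{h+\omega}(P_k,t_iT)\ne 0$, contradicting the choice of $h'$. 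In both cases $h'\ge h$, as required. The main obstacle I expect is purely the degree bookkeeping: the shifts $[-\omega_{u+1}]$ and $[1-\omega_{u+1}]$ in the defining triangle produce the two neighboring degrees $r+\omega+\omega_{u+1}-1$ and $r+\omega+\omega_{u+1}$, and getting the tightness of those bounds right is exactly what makes the case distinction in part~(5) go through via $\omega_{u+1}\ge 1$. Conceptually this is the mirror image of the $\omega\le 0$ argument, but where Lemma~\ref{tech0} relied on compositions of the form $P_j\to P_i[\omega_{u+1}]\to T[\ast]$ being forced to zero for degree reasons below $m$, here the analogous degree-reason cutoff lies above $h$, which is why the inequalities reverse.
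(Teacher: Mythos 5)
Your proof is correct and follows essentially the same route as the paper's: the four-term exact sequence you extract for parts (2)--(4) is exactly the paper's factorization argument through the triangle $P_i[-\omega_{u+1}]\to P_k\to t_iP_k\to P_i[1-\omega_{u+1}]$ made explicit (injectivity of $\Hom^{r+\omega}(t_iP_k,T)\to\Hom^{r+\omega}(P_k,T)$ for (3), surjectivity for (4)), and part (5) is handled by the same case split on $k=i$ versus $k\ne i$. The only blemish is the parenthetical in part (1) claiming the ``stronger'' bound $h\ge r+\omega-1$: nonvanishing of $\Hom^{(r-1)+\omega}(P_i,T)$ gives precisely $h\ge r-1$ by the definition of the maximal nonzero degree and nothing more, but since you never use that stronger claim the argument is unaffected.
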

\begin{proof} \begin{enumerate}
\item Just as in Lemma \ref{tech0}, we get ${0 \neq \Hom^{r-1+\omega}(P_i, T)}$, and hence $h\ge r-1$. 

\item Since $k\not\in \{i\}\cup N(i)$, we have
$$ \Hom^{r+\omega}(P_k,t_i^{-1}T)=\Hom^{r+\omega}(t_i^{-1}P_k,t_i^{-1}T)\cong \Hom^{r+\omega}(P_k,T).$$

\item The case $k \not\in N(i)$ is already considered. Suppose now that  $k \in N(i)$.  Consider the triangle \eqref{tri01}.
For any nonzero ${f \colon P_k\rightarrow t_i^{-1}T[r+\omega]}$, one has ${0\not=(t_if)\rho_{i,k}:P_k\rightarrow T[r+\omega]}$, because ${r+\omega_{u+1}-1 \geq h+1}$. The required assertion now follows.

\item Due to the assertions already proven, it remains to consider the case  $k \in N(i)$. 
Consider the triangle \eqref{tri01} and
pick a nonzero morphism ${f' \colon P_k\rightarrow T[r+\omega]}$. Since ${r+\omega_{u+1}\ge h+1}$, one has ${f'\gamma_{i,k}[-\omega_{u+1}]=0}$, and hence ${f'=(t_i f)\rho_{i,k}}$ for some nonzero ${f \colon P_k\rightarrow t_i^{-1}T[r+\omega]}$. This finishes the proof.

\item There exists some $k\in\Gamma_0$ such that ${\Hom^{h+\omega}(P_k,T)\not=0}$. If $k\not=i$ and the maximal nonzero degree of $t_iT$ is less than $h$, then ${\Hom^{h+\omega}(P_k,t_iT)\not=0}$ by the previous assertions of this lemma, which gives a contradiction. If $k=i$, then ${\Hom^{h+\omega}(P_i,T)\not=0}$ and the maximal nonzero degree of $t_iT$ is at least $h+\omega-1>h$ by the first assertion of the current lemma. 
\end{enumerate}
\end{proof} 

Now we are ready to deduce Theorem \ref{main_thm} from Lemma \ref{princ}.

\begin{proof}[Proof of Theorem \ref{main_thm}] According to \cite[Proposition 2.3]{BT}, a group homomorphism $B_\Gamma \xrightarrow{} G$ is injective if and only if the induced monoid homomorphism $B^+_\Gamma \hookrightarrow B \xrightarrow{} G$ is injective. Hence, in our case it is sufficient to show that $B^+_\Gamma \xrightarrow{} \Aut(\mathfrak{D})$ is injective. Assume that it is not. Choose two words $\alpha$, $\beta$ with the smallest sum of lengths $l(\alpha) + l(\beta)$ among all pairs of words with coinciding images in $\Aut(\mathfrak{D})$ and $\alpha \neq \beta$. In particular, $t_\alpha(\Lambda) = T_\alpha \cong T_\beta = t_\beta(\Lambda)$ in $\mathfrak{D}$. Thus, for any $u\in\{0,1\}$, $I_{u,\alpha}=I_{u,\beta}$ and $P_i$ ($i\in V^u$) is a direct summand of $(T_\alpha)_{I_{u,\alpha}}$ if and only if it is a direct summand of $(T_\beta)_{I_{u,\beta}}$. First assume that one of $\alpha$ and $\beta$ is $1$, say $\alpha$.
Then $min(\alpha)=0$ if $\omega\le 0$ and $max(\alpha)=0$ if $\omega\ge 2$. But since $\beta \neq 1$, we have $l(\beta) \geq 1$. Lemmas \ref{tech0} and \ref{tech2} imply that $min(\beta)<0$ if $\omega\le 0$ and $max({\beta})>0$ if $\omega\ge 2$. Thus, we may assume that $\alpha \neq 1$, $\beta \neq 1$.

 \bigskip
  By Lemma \ref{princ} there exists $i \in \Gamma_0$ such that $$\alpha = s_i \alpha'\mbox{ and }\beta = s_i \beta'.$$

The images of $\alpha'$ and $\beta'$ also coincide in $\Aut(\mathfrak{D})$, and since $l(\alpha') + l(\beta') = l(\alpha) + l(\beta) - 2 < l(\alpha) + l(\beta)$ we get $\alpha' = \beta'$. But then
$ \alpha = s_i \alpha' = s_i \beta' = \beta$, 
which contradicts the assumption that $\alpha \neq \beta$.   \end{proof}


\section{The proof of Lemma \ref{princ} for $\omega\ge 2$}

In this section we assume that $\omega\ge 2$ and prove Lemma \ref{princ} in this case. Denote $max(\alpha)$ by $h_{\alpha}$.
Observe that if $\omega\ge 2$, then $P_j$ with $j\in V^u$ is a direct summand of $(T_\alpha)_{[h_{\alpha}-\omega_{u+1}+1,h_{\alpha}]}$ if and only if $\Hom^{r+\omega}(P_j,T_{\alpha})\not=0$ for some $r\in [h_{\alpha}-\omega_{u+1}+1,h_{\alpha}]$, due to the second remark right after Definition \ref{dirsum}. In fact, we used the notion of a direct summand in the form it was first provided only to unify the two cases $\omega \leq 0$ and $\omega \geq 2$ in the statement of Lemma \ref{princ} and the deduction of Theorem \ref{main_thm} from it. 

To summarize, the following statement is equivalent to Lemma \ref{princ} in the case $\omega\ge 2$: 

\begin{lemma}
If $\Hom^{r+\omega}(P_j,T_{\alpha})\not=0$ for some $r\in [h_{\alpha}-\omega_{u+1}+1,h_{\alpha}]$, then $\alpha$ is left-divisible by $s_j$.
\end{lemma}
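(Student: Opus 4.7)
My plan is to prove the statement by induction on $l(\alpha)$.

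For the base case $l(\alpha) = 1$, so $\alpha = s_i$, a direct computation of $T_\alpha = t_i\Lambda$ via the defining triangle of the spherical twist shows that the unique maximal nonzero degree of $T_\alpha$ is $\omega - 1$ and is contributed solely by the summand $t_iP_i = P_i[1-\omega]$. Hence the top-range hypothesis on $P_j$ forces $j = i$, and the conclusion is immediate.

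For the inductive step, pick any decomposition $\alpha = s_k\beta$ with $k\in\Gamma_0$ and $l(\beta) = l(\alpha) - 1$, so that $T_\alpha = t_kT_\beta$ and Lemma \ref{tech2}(5) gives $h_\alpha \ge h_\beta$. Apply Lemma \ref{tech2}(4) with twist index $k$: whenever $k \neq j$, the existence of a nonzero $\Hom^{r+\omega}(P_j, T_\alpha)$ with $r \in [h_\alpha - \omega_{u+1}+1,\, h_\alpha]$ yields a nonzero $\Hom^{r+\omega}(P_j, T_\beta)$ for the same $r$. Since $r \le h_\beta$ is automatic and $r \ge h_\alpha - \omega_{u+1}+1 \ge h_\beta - \omega_{u+1}+1$ from $h_\alpha \ge h_\beta$, this $r$ lies in the top interval of $T_\beta$, so the inductive hypothesis provides $\beta = s_j\beta'$ and hence $\alpha = s_ks_j\beta'$. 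If $k = j$, we are done. If $k \notin N(j) \cup \{j\}$, then $s_j$ and $s_k$ commute and $\alpha = s_js_k\beta'$, again done.

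The main obstacle is the adjacency case $k \in N(j)$. Here the key observation is that if we can show $s_k \mid \beta'$, then writing $\beta' = s_k\delta$ gives $\alpha = s_ks_js_k\delta$, which by the braid relation $s_ks_js_k = s_js_ks_j$ equals $s_js_ks_j\delta$, proving $s_j \mid \alpha$. Thus it suffices to show $s_k \mid \beta'$. Since $l(\beta') = l(\alpha) - 2$, applying the inductive hypothesis to $\beta'$ with the vertex $k$ in place of $j$ reduces this in turn to showing that $P_k$ is a top-range summand of $T_{\beta'}$.

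To establish that $P_k$ is a top-range summand of $T_{\beta'}$ under our hypothesis on $T_\alpha = t_kt_jT_{\beta'}$, I would rewrite $\Hom^{r+\omega}(P_j, T_\alpha) = \Hom^{r+\omega}(t_j^{-1}t_k^{-1}P_j, T_{\beta'})$ and analyze the object $t_j^{-1}t_k^{-1}P_j$ by means of the iterated exact triangles coming from the defining triangles of $t_k^{-1}P_j$ and $t_j^{-1}P_k$. The resulting two-step filtration has successive pieces involving $P_j$, $P_k[\omega_u - 1]$ and $P_j[\omega - 1]$, and combining the induced long exact sequences with the bounds on $h_{\beta'}$, $h_\beta$ and $h_\alpha$ supplied by Lemma \ref{tech2} should pinpoint the nonzero contribution as arising from a morphism $P_k \to T_{\beta'}[r' + \omega]$ with $r'$ in the top interval of $T_{\beta'}$. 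This triangle chase, and the careful bookkeeping of the maximal-degree estimates that makes it work, is the principal technical step of the argument in the case $\omega \ge 2$; everything else is formal.
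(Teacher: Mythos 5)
Your base case, the reduction via Lemma \ref{tech2}(4)--(5) to $\beta=s_j\beta'$, the easy cases $k=j$ and $k\notin N(j)\cup\{j\}$, and the observation that in the adjacency case it suffices to prove $s_k\mid\beta'$ and then apply the braid relation, all match the first three steps of the paper's argument. But the step you defer as a ``triangle chase'' is where essentially all of the difficulty lives, and as sketched it does not close. The identification is in fact cleaner than a filtration: since $t_kt_jP_k\cong P_j$ up to shift, one gets directly $\Hom^{r+\omega}(P_j,T_\alpha)\cong\Hom^{r'+\omega}(P_k,T_{\beta'})$ for an explicit $r'$ (namely $r'=r-\omega_{u}+1$ with the appropriate parity convention). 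The problem is that this $r'$ need not lie in the top interval of $T_{\beta'}$: you only know $r\ge h_\alpha-\omega_{u+1}+1$ and $h_{\beta'}\le h_\alpha$, which for $\omega>2$ leaves room for $h_{\beta'}>r'+\omega_{u}-1$, i.e.\ $T_{\beta'}$ may have nonzero Homs in degrees strictly above the one you produced, contributed by \emph{other} vertices. In that situation the inductive hypothesis simply cannot be applied to $\beta'$ at the vertex $k$, and your induction stalls.

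The paper closes exactly this gap with three ingredients that are absent from your sketch: (a) the degree $h$ and the vertex $j$ are chosen maximal among failures, so that every nonzero $\Hom^{r+\omega}(P_l,T_\alpha)$ with $r>h$ and $l$ on the same side as $j$ forces $s_l\mid\alpha$; (b) using this, one proves $\Hom^{r+\omega}(P_l,T_{\beta'})=0$ for all $r>h$ and $l\in V^{u}$; and (c) one then iteratively strips off a set $\Delta\subset V^{u+1}$ of vertices accounting for all nonzero Homs of $T_{\beta'}$ above degree $h$, obtaining $\beta'=s_\Delta\theta$ with $h_\theta\le h$. Only after this factorization does the statement for shorter words apply (to $\theta$), and the dichotomy $k\in\Delta$ versus $k\notin\Delta$ finishes the proof. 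For $\omega=2$ the interval $[h_\alpha-\omega_{u+1}+1,h_\alpha]$ is a single degree, $h=h_\alpha$ automatically, and your argument would go through without steps (a)--(c); but for general $\omega\ge2$ the factorization step is not ``careful bookkeeping'' but a genuinely new idea your proposal is missing.
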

\begin{proof} We argue by contradiction. Take $\alpha\in B_\Gamma^+$ not satisfying the claim and of minimal length among all such words. It is clear that $l(\alpha)>0$, thus, $\alpha$ can be presented as $\alpha = s_i\beta$ for some $i\in\Gamma_0$ and $\beta \in B_\Gamma^+$ with $l(\beta)<l(\alpha)$. In particular, the statement of the lemma holds for $\beta$ and all its right factors. Without loss of generality we may assume that $i\in V^0$.
 Since the assertion of the lemma fails for $\alpha$, there exists some $j \in V^u$ ($u=0,1$) such that $\Hom^{h+\omega}(P_j,T_\alpha)\not=0$ for some $h\in [h_{\alpha}-\omega_{u+1}+1,h_{\alpha}]$, but $\alpha$ is not left-divisible by $s_j$. It is clear that $j\not=i$. We may assume without loss of generality that if $\Hom^{r+\omega}(P_k,T_\alpha)\not=0$ for some $k\in V^u$ and $r>h$, then $\alpha$ is left-divisible by $s_k$.

\begin{enumerate}
\item First observe that $\beta=s_j\gamma$ for some $\gamma\in B_\Gamma^+$ with $l(\gamma)=l(\alpha)-2$.
Indeed, we have $\Hom^{h+\omega}(P_j,T_\beta)\not=0$ and $h\in [h_{\beta}-\omega_{u+1}+1,h_{\beta}]$ by Lemma \ref{tech2}. Then $\beta$ is indeed left-divisible by $s_j$ because the assertion of the lemma holds for $\beta$.

 \item Now we show that $j\in N(i)\subset V^1$ and, in particular, ${h\in [h_{\alpha}-\omega_0+1,h_{\alpha}]}$. Indeed, this easily follows from the previous claim, because if $j\not\in N(i)$, then one has $\alpha=s_is_j\gamma=s_js_i\gamma$ which contradicts the choice of $j$.

\item Note that
$$
\Hom^{h-\omega_1+1+\omega}(P_i,T_\gamma)\cong \Hom^{h+\omega}(t_it_jP_i[\omega_1-1],t_it_jT_\gamma)
=\Hom^{h+\omega}(P_j,T_\alpha)\not=0.
$$

\item\label{nns} The next step is to establish that $\Hom^{r+\omega}(P_k,T_\gamma){=}0$ for any $k\in V^1$ and $r>h$. Suppose for a contradiction that $\Hom^{r+\omega}(P_k,T_\gamma){\not=}0$ for some $k\in V^1$ and $r>h$.
By Lemma \ref{tech2} we have $h_\gamma\le h_\beta\le h_\alpha$ and $r\ge h+1\ge h_{\alpha}-\omega_0+2$.
Then $\Hom^{r+\omega}(P_k,T_\alpha)\not=0$, again by Lemma \ref{tech2}. 
Thus, $s_k$ divides $\alpha$ on the left by the choice of $h$, so we have $\alpha=s_k\alpha'$ for some $\alpha'\in B_\Gamma^+$ with $l(\alpha')=l(\alpha)-1$.
 Lemma \ref{tech2} implies that $\Hom^{h+\omega}(P_j,T_{\alpha'})\not=0$ and $h\in [h_{\alpha'}-\omega_0+1,h_{\alpha'}]$.
Since the claim of the current lemma is valid for $\alpha'$, $s_j$ divides $\alpha'$ on the left. But $s_j$ and $s_k$ commute, so $s_j$ also divides $\alpha$ on the left which contradicts the choice of $j$.

\item Let $\theta_0=\gamma$ and $\Delta_0=\varnothing$. We repeat the following procedure inductively to construct a finite sequence of pairs $(\theta_p, \Delta_p)$ with $\theta_p \in B_{\Gamma}^+, \Delta_p \subset V^0$. If at some moment $h_{\theta_p} \leq h$, we define $\theta=\theta_p$ and $\Delta=\Delta_p$.
If $h_{\theta_p} > h$, pick some $k$ such that $\Hom^{h_{\theta_p}+\omega}(P_k,T_{\theta_p})\not=0$ and define $\theta_{p+1}=s_k^{-1}\theta_p$ and $\Delta_{p+1}=\Delta_p\cup\{k\}$.
We have $\theta_{p+1}\in B_\Gamma^+$ and $l(\theta_{p+1})=l(\theta_{p})-1$, because the assertion of the lemma holds for $\theta_p$. Now recall that $\Hom^{r+\omega}(P_l,T_{\theta_0})=0$ for any $r>h$ and $l\in V^1$ by \ref{nns}). By Lemma \ref{tech2}, we have $h\ge h_{\alpha}-\omega_0+1\ge h_{\theta_p}-\omega_0+1$ for any $p\ge 0$. Thus, $\Hom^{r+\omega}(P_l,T_{\theta_p})=0$ for any $r>h$, $l\in V^1$ and $p\ge 0$ again by Lemma \ref{tech2} and by induction on $p$. Therefore $\Delta_p\subset V^0$ for any $p\ge 0$. Note also that $k\in \Delta_p$, $\theta_p\not=\theta$ implies
\begin{multline*}
\Hom^{h_{\theta_p}+\omega}(P_k,T_{\theta_p})\cong \Hom^{h_{\theta_p}+\omega}\left(\left(\prod\limits_{l\in\Delta_p}t_l\right)P_k,\left(\prod\limits_{l\in\Delta_p}t_l\right)T_{\theta_p}\right) = \Hom^{h_{\theta_p}+2\omega-1}(P_k,T_{\theta_0})=0,
\end{multline*}
because $h_{\theta_p}+\omega-1> h+\omega-1\ge h_\alpha+\omega_1>h_\alpha\ge h_{\theta_0}$. To summarize, whenever the maximal nonzero degree of $T_{\theta_p}$ is greater than $h$, we obtain $\Delta_{p+1}$ by adding a new element to $\Delta_p$. Thus, this process is bound to terminate and as a result we get a factorization $\gamma=\left(\prod\limits_{k\in\Delta}s_k\right)\theta$ with $\Delta\subset V^0$ and $\theta\in B_\Gamma^+$ such that $l(\theta)=l(\gamma)-|\Delta|$ and $h_{\theta}\le h$.

\item\label{nnf} If $i\in\Delta$, then $s_i$ divides $\gamma$ on the left. In this case  $s_is_js_i=s_js_is_j$ divides $\alpha$ on the left, and hence so does $s_j$. This contradicts the choice of $j$.

\item If $i\not\in\Delta$, then $\Hom^{h-\omega_1+1+\omega}(P_i,T_\theta)\not=0$ by Lemma \ref{tech2}. Since the claim of the current lemma is true for $\theta$, $s_i$ divides $\theta$ on the left. Then $\gamma$ is again left-divisible by $s_i$ and we get a contradiction just as before.
\end{enumerate}
\end{proof}

\begin{remark} Observe that in the case $\omega=2$ the number $h$ appearing in the proof is automatically equal to $h_{\alpha}$. This allows to omit the steps \ref{nns}--\ref{nnf} of the proof, which makes it even shorter. 
\end{remark}

\section{An outline of the proof of Lemma \ref{princ} for $\omega\le 0$}

In this section we discuss a general plan of our proof of Lemma \ref{princ} provided that $\omega\le 0$. Until the end of the paper we will assume that this is the case, although some intermediate statements will hold for any $\omega$. From here on we denote $min(\alpha)$ by $m_\alpha$.

 Similarly to the case $\omega\ge 2$, we argue by contradiction. Let $\alpha\in B_\Gamma^+$ be a word not satisfying the assertion of Lemma \ref{princ} and of minimal length. As before, $\alpha$ can be presented as $\alpha = s_i\beta$ for some $i\in\Gamma_0$ and $\beta \in B_\Gamma^+$ with $l(\beta)<l(\alpha)$. The statement of Lemma \ref{princ} holds in particular for $\beta$ and all its right factors. Without loss of generality we may assume that $i\in V^0$. Since Lemma \ref{princ} fails for $\alpha$, there exists some $j \in V^u$ ($u=0,1$) such that $P_j$ is a direct summand of $(T_\alpha)_{[m_{\alpha}, m_{\alpha} - \omega_{u+1}]}$ while $\alpha$ is not divisible by $s_j$ on the left. It is clear that $j\not=i$.
 Let $m\in [m_{\alpha}, m_{\alpha} - \omega_{u+1}]$ be the minimal degree such that $P_j$ is a direct summand of $(T_\alpha)_{m}$. We may assume without loss of generality that if $r<m$ and $P_k$ with $k\in V^u$ is a direct summand of $(T_\alpha)_{r}$, then $\alpha$ is divisible by $s_k$ on the left.  
 Now let 
 $$\Delta_{-1}=\varnothing,\,\,\Delta_0 = \{i\},\,\,\Delta_1 = \{k \in V^1 \colon P_k\mbox{ is a direct summand of }(T_\beta)_{m}\}.$$ 
 
 What can we immediately say about the set $\Delta_1$? 

\begin{lemma}\label{base} \begin{enumerate}
\item $j\in N(i)\subset V^1$.
\item If $P_k$ with $k\in V^1$ is a direct summand of $(T_{\beta})_{r}$ with $r\le m$, then $r=m$ and $k\in N(i)$. 
\item If $P_k$ is a direct summand of $(T_{\beta})_{m}$ for some  $k\in V^1\setminus\{j\}$ (i.e. $|\Delta_1| \geq 2$), then $s_l$ does not divide $\alpha$ on the left for any $l\not=i$.
\item $\Hom^r(P_i, T_\beta) = 0$  for $r\le m+\omega_0.$
\end{enumerate}
\end{lemma}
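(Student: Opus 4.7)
My plan is to treat the four parts via a common mechanism: I will use Lemma~\ref{tech0} to transfer ``direct summand'' information between $T_\beta$ and $T_\alpha=t_iT_\beta$, invoke Lemma~\ref{princ} inductively for strictly shorter words (valid since $l(\alpha)$ was chosen minimal among counterexamples), and combine the resulting left-factorizations with the braid relations in $B_\Gamma^+$ to contradict the assumption that $s_j$ does not left-divide~$\alpha$. Part~(4) will be a one-line direct computation.

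For parts~(1)--(2) I argue by contradiction. Suppose for~(1) that $j\in V^0\setminus\{i\}$ or $j\in V^1\setminus N(i)$; in either case $j$ and $i$ are non-adjacent. Since $m\le m_\alpha-\omega_{u+1}$, Lemma~\ref{tech0}(2) transfers $P_j$ from $(T_\alpha)_m$ to $(T_\beta)_m$. The inequalities $m_\alpha\le m_\beta$ (Lemma~\ref{tech0}(3)) and $m\ge m_\beta$ place $m$ in $[m_\beta,m_\beta-\omega_{u+1}]$, so the inductive hypothesis applied to $\beta$ gives $\beta=s_j\beta'$, and using $s_is_j=s_js_i$ I rewrite $\alpha=s_js_i\beta'$, a contradiction. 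For~(2), let $k\in V^1$ and $r\le m$ be such that $P_k$ is a direct summand of $(T_\beta)_r$; Lemma~\ref{tech0}(2) transfers this to $P_k$ being a direct summand of $(T_\alpha)_r$. The case $k=j$, $r<m$ is immediately ruled out by the minimality of $m$. For $k\neq j$, I split into (a) $k\notin N(i)$ with any $r\le m$, and (b) $k\in N(i)$ with $r<m$: in~(a), the inductive hypothesis on $\beta$ gives $\beta=s_k\beta'$ and the relation $s_is_k=s_ks_i$ yields $\alpha=s_k\alpha_1$; in~(b), the minimality assumption on $m$ gives $\alpha=s_k\alpha_1$ directly. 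In either case Lemma~\ref{tech0}(2) applied with twist $k$ exhibits $P_j$ as a direct summand of $(T_{\alpha_1})_m$, so the inductive hypothesis for $\alpha_1$ gives $\alpha_1=s_j\alpha_2$; as $j,k\in V^1$ are non-adjacent, $\alpha=s_ks_j\alpha_2=s_js_k\alpha_2$, again contradicting the choice of $\alpha$.

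For part~(3) I pick $k\in V^1\setminus\{j\}$ with $P_k$ a direct summand of $(T_\beta)_m$ (so $k\in N(i)$ by~(2)) and suppose $\alpha=s_l\alpha'$ for some $l\neq i$. The cases $l=j$ and $l=k$ fail by the same transfer-plus-induction trick used in~(2). Otherwise $l\notin\{i,j,k\}$; Lemma~\ref{tech0}(2) with twist $l$ exhibits both $P_j$ and $P_k$ as direct summands of $(T_{\alpha'})_m$, the inductive hypothesis for $\alpha'$ forces $s_j$ and $s_k$ both to left-divide~$\alpha'$, and the standard LCM property of the Artin monoid $B_\Gamma^+$ (using $s_js_k=s_ks_j$) then produces $\alpha'=s_js_k\delta$. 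Write $\alpha=s_ls_js_k\delta$. If $l\notin N(j)$, commuting $s_l$ past $s_j$ contradicts the assumption on $\alpha$ at once. If $l\notin N(k)$, swapping $s_js_k$ to $s_ks_j$ and then $s_ls_k$ to $s_ks_l$ gives $\alpha=s_k(s_ls_j\delta)$, after which one more application of Lemma~\ref{tech0}(2) with twist $k$ and the inductive hypothesis for $s_ls_j\delta$ again force $s_j$ to left-divide $\alpha$. The only remaining possibility is $l\in N(j)\cap N(k)\setminus\{i\}$, which together with $i\sim j$ and $i\sim k$ produces a $4$-cycle $i-j-l-k-i$ in $\Gamma$, contradicting the tree structure of an ADE Dynkin diagram. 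This last step is the main obstacle of the proof and the only place where the shape of $\Gamma$, rather than just Lemma~\ref{tech0} and braid combinatorics, is used.

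For part~(4), the identity $t_iP_i=P_i[1-\omega]$ combined with $t_i$ being an autoequivalence yields $\Hom^r(P_i,T_\beta)\cong\Hom^{r-1+\omega}(P_i,T_\alpha)$. Since $m\le m_\alpha-\omega_0$, any $r\le m+\omega_0$ satisfies $r-1<m_\alpha$, so the right-hand side vanishes by the very definition of $m_\alpha$.
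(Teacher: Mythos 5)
Your proof is correct and follows essentially the same strategy as the paper's: transfer direct-summand data between $T_\alpha$ and its twists via Lemma \ref{tech0}, apply Lemma \ref{princ} inductively to shorter words, and finish with commutation relations and the tree structure of $\Gamma$ (the paper, too, uses that an ADE diagram has no $4$-cycle $i-j-l-k-i$, phrased as ``$s_l$ commutes with at least one of $s_j$, $s_k$''). The only real divergence is in part (3), where you invoke the LCM property of $B_\Gamma^+$ to combine the two left-divisors $s_j$ and $s_k$ of $\alpha'$ into $\alpha'=s_js_k\delta$ --- a correct but avoidable appeal to Garside-type machinery that the paper explicitly avoids, by instead treating the two commuting cases one divisor at a time exactly as in your own handling of part (2).
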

\begin{proof}
\begin{enumerate}
\item Suppose that $j \notin N(i)$. Then $P_j$ is a direct summand of $(T_\beta)_{m}$ and $m\in[m_\beta,m_\beta-\omega_{u+1}]$ by Lemma \ref{tech0}. Since the statement of Lemma \ref{princ} holds for $\beta$, we have
    $\alpha = s_i \beta = s_i s_j \beta' = s_j s_i \beta'$ for some $\beta'\in B_\Gamma^+$ with $l(\beta')=l(\alpha)-2$.
Thus, $j$ does not produce a contradiction to the statement of Lemma \ref{princ}, which contradicts the choice of $j$. Therefore, indeed, $j\in N(i)\subset V^1$ and, in particular, we have $m\in [m_{\alpha}, m_{\alpha} - \omega_0]$.

\item Let $k\in V^1$ be such that $P_k$ is a direct summand of $(T_{\beta})_{r}$ with $r\le m$. Then $P_k$ is a direct summand of $(T_{\alpha})_{r}$ by Lemma \ref{tech0}.
If $r<m$, then $s_k$ divides $\alpha$ on the left by the definition of $m$. If $r=m$ but $k\not\in N(i)$, then $s_k$ divides $\alpha$ on the left by the argument from the proof of 1). In any case, we have $\alpha=s_k\alpha'$ for some $\alpha'\in B_\Gamma^+$ with $l(\alpha')=l(\alpha)-1$.
Note that  Lemma \ref{tech0} implies again that $P_j$ is a direct summand of $(T_{\alpha'})_{m}$ and $m\in[m_{\alpha'},m_{\alpha'}-\omega_0]$.
Since the claim of Lemma \ref{princ} is valid for $\alpha'$, $s_j$ divides $\alpha'$ on the left. But $s_j$ and $s_k$ commute, so $s_j$ divides $\alpha$ on the left. This contradicts the choice of $j$.

\item Suppose that $s_l$ divides $\alpha$ on the left for some $l\not=i$. Then $s_l$ commutes with at least one of the elements $s_j$ and $s_k$. The argument from the proof of 2) shows that either $s_j$ or $s_k$ divides $\alpha$ on the left. Since $s_k$ and $s_j$ commute, $s_j$ divides $\alpha$ on the left in any case by the same argument. This contradicts the choice of $j$.

\item Suppose that $\Hom^r(P_i, T_\beta) \not= 0$  for some $r\le m+\omega_0.$
By Lemma \ref{tech0} one has $m_\alpha\le r-1\le m+\omega_0-1$. But in this case $m\not\in [m_{\alpha}, m_{\alpha} - \omega_0]$, which yields a contradiction.
\end{enumerate}
\end{proof}

In particular, by Lemma \ref{base} we know that if $|\Delta_1|\ge 2$, then for any  $k \neq i$ the word $\alpha$ cannot be written as $\alpha=s_k\alpha'$ with $l(\alpha')=l(\alpha)-1$. Hence, it suffices to show that either $|\Delta_1|=1$ and $\alpha$ is left-divisible by $s_j$, where $\{j\} = \Delta_1$, or that $|\Delta_1|\ge 2$ and $\alpha$ is left-divisible by $s_k$ with some $k \neq i$. Indeed, any one of these alternatives would immediately give a contradiction. 

Now define $s_A=\prod\limits_{k\in A}s_k$ and $t_A = \prod\limits_{k\in A}t_k$ for any $A\subseteq V^u$, $u\in\{0,1\}$. Observe that $\alpha = s_i\beta = s_{\Delta_0}\beta = s_{\Delta_0}s_{\Delta_1}\beta_1$, where $l(\beta_1) = l(\alpha) - 1 - |\Delta_1|$, because the statement of Lemma \ref{princ} holds for $\beta$. Starting from this presentation, we will continue factorizing the word $\alpha$ inductively, constructing $\Delta_2, \Delta_3,$..., as will be described below in a moment. When we process terminates, we obtain a suitable, very controllable presentation of a left factor of $\alpha$. If $|\Delta_1| = 1$, we then need to show that this left factor is left-divisible by $s_j$ (in fact, it takes just one braid relation to see this). If $|\Delta_1| \geq 2$, it suffices to show that the left factor of $\alpha$ we found by factorizing is left-divisible by some $s_k$ with $k \neq i$ (this case will require more sophisticated combinatorial arguments which are discussed in Section \ref{br}). 

We will now make this plan more precise.

\begin{enumerate}
    \item[Step I:] \underline{{\bf Factorization.}} 
    Define $\sigma_u$ for $u\in\Z$ by $\sigma_0=m$ and $\sigma_{u+1}=\sigma_u+1-\omega_{u+1}$. In other words, we have $\sigma_{2u}=m+u(2-\omega)$, $\sigma_{2u+1}=m+u(2-\omega)+1-\omega_{u+1}$. 
    
    As announced above, first we are going to construct a presentation of $\alpha$ of a particular form, starting from the 'base case' $\alpha = s_{\Delta_0}s_{\Delta_1}\beta_1 = s_is_{\Delta_1}\beta_1$ obtained earlier. Define $\chi_0(i)=1$. We will continue the process inductively and obtain a presentation $\alpha = s_{\Delta_0}s_{\Delta_1}\dots s_{\Delta_q} s_y\widetilde{\beta}$ satisfying the following conditions for $1 \leq u \leq q:$
\begin{enumerate}[label=\textnormal{(\arabic*)}]
     \item $\alpha = s_{\Delta_0}\dots s_{\Delta_u} \beta_u$ for some $\beta_u\in B_\Gamma^+$ with $l(\beta_u)=l(\alpha)-\sum\limits_{v=0}^{u}|\Delta_v|$  
  \item $\Delta_{u-2}\subseteq\Delta_u\subseteq N(\Delta_{u-1})$.
 \item $P_l$ is not a direct summand of $(T_{\beta_u})_{[\sigma_{u-3}+1,\sigma_{u-1}]}$ for any $l \in V^u$.
\item The minimal nonzero degree of $T_{\beta_u}$ is at least $\sigma_{u-2}+1$.
\item For any $l\in \Delta_u$, $P_l$ is a direct summand of $(t_lT_{\beta_u})_{[\sigma_{u-3}+1,\sigma_{u-1}]}$.
\item $\chi_u(k)=\sum\limits_{t\in N(k)}\chi_{u-1}(t)-\chi_{u-2}(k) > 0$ for any $k\in \Delta_u$, where we set $\chi_v(t)=0$ if $v<0$ or $t\not\in\Delta_v$.
\end{enumerate} 
Moreover, $y\in \Delta_{q-1}$, $s_y$ divides $\beta_q$ on the left and $$\chi_{q+1}(y) = \sum\limits_{t\in N(y)\cap \Delta_{q}}\chi_{q}(t)-\chi_{q-1}(y) = 0.$$
  
 Note that the sets $\Delta_0$ and $\Delta_1$ defined earlier satisfy the required conditions. Indeed, one has $\Delta_1 \subseteq N(i) = N(\Delta_0)$ by Lemma \ref{base}. The minimal nonzero degree of $T_{\beta_1}$ is not smaller than $m_\alpha\ge m+\omega_0=\sigma_{-1}+1$ by  Lemma \ref{tech0}. Since $P_j$ is a direct summand of $(T_\beta)_{m}$, Lemma \ref{tech0} implies that $P_j$ is also a direct summand of $\left(t_{\Delta_1\setminus\{j\}}^{-1}T_\beta\right)_{m}=(t_{j}T_{\beta_1})_{m}$ for any $j\in\Delta_1$. 
Moreover, for any $r\le m=\sigma_0$, $P_k$ with $k\in V^1\setminus\Delta_1$ is not a direct summand of $(T_{\beta})_{r}$ by Lemma \ref{base}, hence is not a direct summand of $(T_{\beta_1})_{r}$ by Lemma \ref{tech0}. On the other hand, $P_k$ with $k\in\Delta_1$ cannot be a direct summand of $(T_{\beta_1})_{r}$ either, because we have $r+\omega-1\le m+\omega-1\le m_{\beta}+\omega_1-1<m_{\beta}$ and, therefore,
$$\Hom^{r+\omega}(P_k,T_{\beta_1})\cong \Hom^{r+2\omega-1}(t_{\Delta_1}P_k[\omega-1],T_{\beta})=\Hom^{r+2\omega-1}(P_k,T_{\beta})=0.$$
 Finally, we clearly have $\chi_1(j)=1>0$ for any $j\in\Delta_1$.

  Thus, it is sufficient to show that if we have sets $\Delta_0,\dots,\Delta_{p}$ such that the properties above are satisfied for any $1\le u\le p$, then we either can construct $\Delta_{p+1}$ in such a way that the properties above will be satisfied for $u=p+1$ as well or find $y\in \Delta_{p-1}$ such that $s_y$ divides $\beta_p$ on the left and $\sum\limits_{t\in N(y)\cap \Delta_{p}}\chi_{p}(t)=\chi_{p-1}(y)$.

    We will introduce all the necessary technical tools in Section \ref{tto} and then discuss this step in detail in Section \ref{2tpt}. 
    
    \item[Step II:] \underline{{\bf Braiding.}} As we have explained a little earlier, once a presentation of $\alpha$ of the form $s_{\Delta_0}s_{\Delta_1}\dots s_{\Delta_p} s_y\widetilde{\beta}$ is obtained, it remains to show that either $\Delta_1=\{j\}$ for some $j$ such that $s_j$ divides $s_{\Delta_0}s_{\Delta_1}\dots s_{\Delta_p} s_y$ on the left or $|\Delta_1|\ge 2$ and at least one of $s_k$ with $k \in \Gamma_0 \setminus \{i\}$ divides $_{\Delta_0}s_{\Delta_1}\dots s_{\Delta_p} s_y$ on the left. Now note that if $\Delta_1=\{j\}$, then $\chi_2(i)=0$. Thus, $y = i$ and our presentation is of the form $\alpha=s_{i}s_{j}s_i\widetilde{\beta}=s_{j}s_{i}s_j\widetilde{\beta}$, which immediately yields a contradiction. Thus, at this point it suffices to consider the case $|\Delta_1|\ge 2$ and show that some $k \in \Gamma_0 \setminus \{i\}$ can be pulled to the very left of the subword $s_{\Delta_0}s_{\Delta_1}\dots s_{\Delta_p} s_y$, applying a sequence of braid and commutation relations. This step is discussed in Section \ref{br}.  
\end{enumerate}

 \section{Two-term objects}\label{tto}

In this section we introduce the notion of a two-term object in $\mathfrak{D}$ and discuss some of the properties of these objects. We will extensively use the results of this sections in the 'factorization' step of the proof. To put it simply, a two-term object is a cone of a morphism from a direct sum of $P_i$'s with $i \in V^u$ to a direct sum of $P_i$'s with $i \in V^{u+1}$, for $u \in \{0, 1\}$, modulo an appropriate shift. When $\mathfrak{D}$ is the bounded derived category $D^b(\Lambda)$ of a finite-dimensional algebra (see Section \ref{trpic}), indecomposable two-term objects are direct summands of two-term tilting complexes over $\Lambda$. In our proof of the main result two-term objects will appear naturally as images of spherical objects under some sequences of spherical twists. For instance, as we will see later, in the notation of the previous section the object $t_{\Delta_u}^{-1} \dots t_{\Delta_1}^{-1} P_i$ is a shift of a two-term object. 

\begin{definition}
An object $X$ of a triangulated category $\mathfrak{D}$ with a fixed $\Gamma$-configuration of $\omega$-spherical objects $\{P_j\}_{j\in \Gamma_0}$ is called {\it two-term} if there exists a triangle
$$X[-1]\xrightarrow{\beta_X} \bigoplus\limits_{j \in V^u } P_j^{x_j}[-\omega_u] \xrightarrow{\varphi_X} \bigoplus\limits_{k \in V^{u+1}} P_k^{x_k}\xrightarrow{\alpha_X} X $$
in $\mathfrak{D}$ for some $u\in\{0,1\}$ and some non-negative integers $x_j$, $j\in\Gamma_0$.
\end{definition} 

We will be especially interested in two-term objects for which none of the $P_i$'s either ``on the left'' or ``on the right'' splits off as a direct summand: 

\begin{definition} \begin{enumerate}
    \item A two-term object $X$ is called {\it right-proper} if $f\varphi_X\not=0$ for any split epimorphism ${f \colon \bigoplus\limits_{k \in V^{u+1}} P_k^{x_k}\rightarrow P_l}$ with $l\in V^{u+1}$.
    \item 
A two-term object $X$  is called {\it left-proper} if $\varphi_X g[-\omega_u]\not=0$ for any split monomorphism ${g \colon P_l\rightarrow \bigoplus\limits_{j \in V^u } P_j^{x_j}}$ with $l\in V^{u}$. 
\end{enumerate}

For a two-term object $X$ we also define $\ls(X)=\{j\in V^{u}\mid x_j\not=0\}$ and $\rs(X)=\{k\in V^{u+1}\mid x_k\not=0\}$.
\end{definition}

For instance, $P_l$ for any $l\in\Gamma_0$ is the simplest example a left-proper two-term object. Here we have $\alpha_{P_l}=id_{P_l}$, $\beta_{P_l}=\varphi_{P_l}=0$, $\ls(P_l)=\varnothing$ and $\rs(P_l)=\{l\}$. It is not difficult to show that any two-term object $X$ can be represented in the form $X=X'\oplus \bigoplus\limits_{k \in V^{u+1}} P_k^{r_k}$, where $X'$ is a right-proper two-term object. 

The following lemma shines a little more light on these two properties, providing a couple of useful equivalent reformulations. 


\begin{lemma}\label{lpeq} Let $X$ be a two-term object as in the definition above.
\begin{enumerate}
\item The following three conditions are equivalent:
\begin{itemize}
\item $X$ is right-proper;
\item if $l\in V^{u+1}$ and $g \colon P_l\rightarrow \bigoplus\limits_{k \in V^{u+1}} P_k^{x_k}$ is not a split monomorphism, then ${\alpha_Xg=0}$;
\item $\dim_\kk\Hom^*(X,P_l)=\sum\limits_{k\in N(l)}x_k$ for any $l\in V^{u+1}$.
\end{itemize}
\item The following three conditions are equivalent:
\begin{itemize}
\item $X$ is left-proper;
\item if $l\in V^{u}$ and $f:\bigoplus\limits_{j \in V^u } P_j^{x_j}\rightarrow P_l$ is not a split epimorphism, then ${f[-\omega_u]\beta_X=0}$;
\item $\dim_\kk\Hom^*(X,P_l)=\sum\limits_{k\in N(l)}x_k$ for any $l\in V^{u}$.
\end{itemize}
\end{enumerate}
\end{lemma}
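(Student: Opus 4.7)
Both parts of the lemma are proved by the same template, so I sketch part~1; part~2 follows by reading the defining triangle in the opposite direction and interchanging the roles of $V^u$ and $V^{u+1}$.

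The plan is to fix $l \in V^{u+1}$ and apply $\Hom^*(-, P_l)$ to the defining triangle of $X$, then extract dimensions from the resulting long exact sequence. Using property~(ii) of spherical objects and the $\Gamma$-configuration, one reads off immediately that $\Hom^*\bigl(\bigoplus_k P_k^{x_k}, P_l\bigr)$ is $x_l$-dimensional in each of degrees $0$ and $\omega$ (entirely from the $k = l$ summand), while $\Hom^*\bigl(\bigoplus_j P_j^{x_j}[-\omega_u], P_l\bigr)$ is $\bigl(\sum_{j\in N(l)} x_j\bigr)$-dimensional and concentrated in degree $0$. Since $\omega \neq 1$ none of these degrees collide, and a direct count yields
\[
\dim_\kk \Hom^*(X, P_l) \;=\; 2x_l + \sum_{j\in N(l)} x_j \;-\; 2\dim_\kk \im(\varphi_X^*),
\]
where $\varphi_X^* \colon \Hom^0\bigl(\bigoplus_k P_k^{x_k}, P_l\bigr) \to \Hom^0\bigl(\bigoplus_j P_j^{x_j}[-\omega_u], P_l\bigr)$ is the map $f \mapsto f \varphi_X$. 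Condition~(3) is therefore equivalent to $\dim \im(\varphi_X^*) = x_l$, and the task reduces to matching this numerical statement with each of~(1) and~(2).

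For~(1), when $\omega \neq 0$ the source of $\varphi_X^*$ equals $\kk^{x_l}$ and every nonzero element is a split epimorphism, so right-properness is simply injectivity of $\varphi_X^*$, equivalent to the image being $x_l$-dimensional. For $\omega = 0$ there is an extra $t$-direction in each copy of $\Hom^0(P_l, P_l) \cong \kk[t]/(t^2)$; the perfect pairing of property~(iii), combined with the locality of this ring and the indecomposability of each $P_i$, forces $t \cdot h = 0$ for every $h \colon P_j \to P_l$ with $j \neq l$ (the composition $ha \colon P_l \to P_l$ through a non-isomorphic object is non-invertible, hence lies in $\kk \cdot t$, and then $t(ha) = 0$ together with non-degeneracy of the pairing gives $th = 0$). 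This kills the $t$-scaled part of the source of $\varphi_X^*$, so right-properness reduces to injectivity on the identity part, once again $\dim \im(\varphi_X^*) = x_l$. For~(2), one applies instead $\Hom^*(P_l, -)$ to the same triangle and obtains the symmetric formula $\dim \Hom^*(P_l, X) = 2x_l + \sum_{j\in N(l)} x_j - 2\dim \im(\varphi_{X*})$ with $\varphi_{X*}$ the postcomposition map; the condition that $\alpha_X$ annihilates every non-split monomorphism from $P_l$ translates via the LES into surjectivity of $\varphi_{X*}$ onto the relevant $x_l$-dimensional graded subspace (namely $\Hom^\omega(P_l, B)$ for $\omega \neq 0$, or its $t$-scaled analogue for $\omega = 0$), again giving $\dim \im(\varphi_{X*}) = x_l$. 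The spherical pairing identity $\dim \Hom^*(P_l, X) = \dim \Hom^*(X, P_l)$ then matches this to~(3).

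Part~2 is handled by the same machinery, with $l \in V^u$ and the two flanking Hom complexes exchanging their roles. The main obstacle throughout is the case $\omega = 0$, in which the identity and the spherical class $t$ share a cohomological degree and must be separated algebraically by the $t \cdot h = 0$ observation above; away from this case the proof is essentially a routine long-exact-sequence count.
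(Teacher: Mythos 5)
Your proof is correct and follows essentially the same route as the paper's: apply $\Hom$ to the defining triangle, extract the count $\dim_\kk\Hom^*(X,P_l)=2x_l+\sum_{k\in N(l)}x_k-2\dim_\kk\im\varphi_X^*$, identify each of the three conditions with $\dim_\kk\im\varphi_X^*=x_l$ (resp.\ $\dim_\kk\im\varphi_{X*}=x_l$), and use the spherical pairing to equate $\dim_\kk\Hom^*(X,P_l)$ with $\dim_\kk\Hom^*(P_l,X)$. The only added value is that you spell out the $\omega=0$ subtlety (that $t\cdot h=0$ for $h\colon P_j\to P_l$, $j\neq l$), which the paper asserts implicitly when it claims the degree-$\omega$ part of $\Hom^*(P_l^{x_l},P_l)$ is annihilated by $\varphi_X$.
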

\begin{proof}  We will prove only the first claim, the second one can be deduced by dual arguments.
 Fix some $l\in V^{u+1}$. Then we have 
\begin{multline*}
\dim_\kk\Hom^*(X,P_l)=\dim_\kk\Hom^*\left(\bigoplus\limits_{j \in V^u } P_j^{x_j},P_l\right)+\dim_\kk \Hom^*\left(\bigoplus\limits_{k \in V^{u+1}} P_k^{x_k},P_l\right)-2\dim_\kk\im\varphi_X^*\\
=\sum\limits_{k\in N(l)}x_k+2\left(x_l-\dim_\kk\im\varphi_X^*\right),
\end{multline*}
where $\varphi_X^* \colon \Hom^*\left(\bigoplus\limits_{k \in V^{u+1}} P_k^{x_k},P_l\right)\rightarrow \Hom^*\left(\bigoplus\limits_{j \in V^u } P_j^{x_j},P_l\right)$ is the map induced by $\varphi_X$.
Note that the set $\Hom^*\left(\bigoplus\limits_{k \in V^{u+1}} P_k^{x_k},P_l\right)=\Hom^*(P_l^{x_l},P_l)$ is a direct sum of two spaces of dimension $x_l$, one of which is annihilated by $\varphi_X$ and the other is formed by split epimorphisms.
Thus, $\dim_\kk\im\varphi_X^*\le x_l$ and the equality holds for all $l\in V^{u+1}$ precisely when $X$ is right-proper. Thus, $X$ is right-proper if and only if ${\dim_\kk\Hom^*(X,P_l)=\sum\limits_{k\in N(l)}x_k}$ for any $l\in V^{u+1}$.
Observe that $\dim_\kk\Hom^*(X,P_l)=\dim_\kk\Hom^*(P_l,X)$ because $P_l$ is spherical. Analogous arguments show that  $\dim_\kk\Hom^*(P_l,X)=\sum\limits_{k\in N(l)}x_k$ for any $l\in V^{u+1}$ if and only if $\alpha_Xg=0$ for any
$g \colon P_l\rightarrow \bigoplus\limits_{k \in V^{u+1}} P_k^{x_k}$ that is not a split monomorphism.
\end{proof}

Next we are going to show that the set of two-term object is stable under some autoequivalences of $\mathfrak{D}$ (namely, some compositions of spherical twists and their inverses). For $\Delta \subseteq V^u$, we denote $t_{\Delta}^+=t_{\Delta}[\omega_{u+1}-1]$ and ${t_{\Delta}^-=t_{\Delta}^{-1}[1-\omega_{u+1}]}$.

\begin{lemma}\label{2toe} Let $X$ be a two-term object.
\begin{enumerate}
\item If $X$ is right-proper and $\rs(X)\subseteq\Delta \subseteq V^{u+1}$, then $t_{\Delta}^+X$ is a left-proper two-term object with the defining triangle of the form
$$t_{\Delta}^+X[-1]\xrightarrow{\beta_{t_{\Delta}^+X}} \bigoplus\limits_{k \in V^{u+1}} P_k^{x_k'}[-\omega_{u+1}]\xrightarrow{\varphi_{t_{\Delta}^+X}}\bigoplus\limits_{j \in V^u } P_j^{x_j}  \xrightarrow{\alpha_{t_{\Delta}^+X}} t_{\Delta}^+X,$$
 where $x_k' = \sum\limits_{j\in N(k)} x_j - x_k$ for $k \in \Delta$ and $x_k'=0$ for $k\in V^{u+1}\setminus\Delta$.

\item  If $X$ is left-proper and $\ls(X)\subseteq\Delta \subseteq V^{u}$, then $t_{\Delta}^-X$ is a right-proper two-term object with the defining triangle of the form
$$t_{\Delta}^-X[-1]\xrightarrow{\beta_{t_{\Delta}^-X}} \bigoplus\limits_{k \in V^{u+1}} P_k^{x_k}[-\omega_{u+1}]\xrightarrow{\varphi_{t_{\Delta}^-X}}\bigoplus\limits_{j \in V^u } P_j^{x_j'}  \xrightarrow{\alpha_{t_{\Delta}^-X}} t_{\Delta}^-X,$$
 where $x_j' = \sum\limits_{k\in N(j)} x_k - x_j$ for $j \in \Delta$ and $x_j'=0$ for $j\in V^u\setminus\Delta$.
\end{enumerate}
\end{lemma}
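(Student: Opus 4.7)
The plan is to prove Part 1; Part 2 follows by a dual argument (swap $V^u \leftrightarrow V^{u+1}$ and replace $t_\Delta$ by $t_\Delta^{-1}$). The strategy is to apply the functor $t_\Delta^+ = t_\Delta[\omega_u - 1]$ to the defining triangle of $X$ and simplify via the octahedral axiom. Two preliminary facts will be needed: first, since $\{P_k\}_{k \in V^{u+1}}$ are pairwise $\Hom^*$-orthogonal, the twists $t_k$ with $k \in \Delta$ commute pairwise and $t_k P_l = P_l$ for $l \neq k$ in $V^{u+1}$, so together with $\rs(X) \subseteq \Delta$ one obtains $t_\Delta\bigoplus_k P_k^{x_k} = \bigoplus_{k \in \Delta} P_k^{x_k}[1-\omega]$. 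Second, an easy induction on $|\Delta|$ yields for each $j \in V^u$ a triangle
$$\bigoplus_{k \in \Delta \cap N(j)} P_k[-\omega_{u+1}] \xrightarrow{\sum \gamma_{k,j}[-\omega_{u+1}]} P_j \to t_\Delta P_j \to \bigoplus_{k \in \Delta \cap N(j)} P_k[1-\omega_{u+1}].$$

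Applying $t_\Delta^+$ to the defining triangle of $X$ and using the first fact gives
$$t_\Delta^+ X[-1] \to t_\Delta\bigoplus_j P_j^{x_j}[-1] \xrightarrow{g} \bigoplus_{k \in \Delta} P_k^{x_k}[-\omega_{u+1}] \to t_\Delta^+ X,$$
while the second fact, summed over $j$ with the appropriate shift, realizes $B := t_\Delta\bigoplus_j P_j^{x_j}[-1]$ as the middle term of a triangle $A \xrightarrow{a} B \xrightarrow{b} D \xrightarrow{h} A[1]$ with $A := \bigoplus_j P_j^{x_j}[-1]$, $D := \bigoplus_{k\in\Delta} P_k^{y_k}[-\omega_{u+1}]$, $y_k := \sum_{j \in N(k)} x_j$, and $h$ built from the $\gamma_{k,j}$'s. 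I then apply the octahedral axiom to $A \xrightarrow{a} B \xrightarrow{g} C := \bigoplus_k P_k^{x_k}[-\omega_{u+1}]$. The crucial observation is that $g \circ a = 0$: by naturality of the unit $\eta \colon \mathrm{id} \to t_\Delta$, this composition factors through a shift of the natural map $\bigoplus_k P_k^{x_k} \to t_\Delta \bigoplus_k P_k^{x_k} = \bigoplus_k P_k^{x_k}[1-\omega]$, whose components $P_k \to t_k P_k$ lie in $\Hom^{1-\omega}(P_k, P_k) = 0$, since $\Hom^*(P_k, P_k) = \kk[t]/(t^2)$ is concentrated in degrees $0$ and $\omega$ and $\omega \neq 1$. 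Hence $\mathrm{cone}(g \circ a) = A[1] \oplus C$, and the octahedron produces a triangle
$$D \xrightarrow{(h, \bar g)^T} \bigoplus_j P_j^{x_j} \oplus \bigoplus_k P_k^{x_k}[-\omega_{u+1}] \to t_\Delta^+ X \to D[1]$$
in which $\bar g \colon D \to C$ is characterized by $\bar g \circ b = g$.

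The main obstacle is identifying $\bar g$ explicitly and cancelling the redundant $C$-summand. Since $\Hom^0(P_k, P_{k'}) = \delta_{k k'}\,\kk$ for $k, k' \in V^{u+1}$, the map $\bar g$ is encoded by scalar matrices $M_k \in M_{x_k \times y_k}(\kk)$, one per $k \in \Delta$. Tracing the relation $\bar g \circ b = g = t_\Delta \varphi_X[\omega_u - 1]$ through the octahedron, one sees that $M_k$ coincides, up to a nonzero scalar, with the matrix encoding the $P_k$-component of $\varphi_X$ when the latter is written in the basis of $\gamma_{j,k}$'s. By Lemma \ref{lpeq}, right-properness of $X$ is equivalent to $\dim_\kk \Hom^*(X, P_k) = y_k$ for every $k \in V^{u+1}$; applying $\Hom^*(-, P_k)$ to the defining triangle of $X$ translates this into the statement that $M_k$ has full row rank $x_k$. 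Consequently $\bar g$ is a split epimorphism with kernel $D_0 \cong \bigoplus_k P_k^{y_k - x_k}[-\omega_{u+1}]$, and writing $D \cong D_0 \oplus C$ via a chosen splitting puts the map $(h, \bar g)^T$ into the block form $\begin{pmatrix} h|_{D_0} & h|_{C} \\ 0 & \mathrm{id}_C \end{pmatrix}$; standard row and column operations then cancel the $C$-summand, yielding the desired triangle
$$\bigoplus_k P_k^{x_k'}[-\omega_{u+1}] \xrightarrow{h|_{D_0}} \bigoplus_j P_j^{x_j} \to t_\Delta^+ X \to$$
with $x_k' = y_k - x_k$ and with $x_k' = 0$ automatically when $k \in V^{u+1} \setminus \Delta$.

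Finally, the left-properness of $t_\Delta^+ X$ reduces by Lemma \ref{lpeq} to verifying $\dim_\kk \Hom^*(t_\Delta^+ X, P_l) = \sum_{k \in N(l)} x_k$ for every $l \in V^u$ (the analogous identity for $l \in V^{u+1}$ is immediate from the constructed triangle). This is a routine dimension computation, carried out via the identification $\Hom^*(t_\Delta^+ X, P_l) \cong \Hom^{*+1-\omega_u}(X, t_\Delta^{-1} P_l)$ together with the right-properness of $X$ and the explicit triangle for $t_\Delta^{-1} P_l$ obtained by dualising the second preliminary fact above.
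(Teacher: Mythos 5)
Your route is genuinely different from the paper's: you apply $t_\Delta$ functorially to the defining triangle of $X$, compute $t_\Delta$ of the two end terms, and then cancel a redundant summand via an octahedron and Gaussian elimination; the paper instead builds the source of the evaluation map $\bigoplus_{k}P_k\otimes\RHom(P_k,X)\to X$ directly, by choosing a basis of $\Hom^*(P_k,X)$ adapted to the triangle of $X$ (using Lemma \ref{lpeq} to know its dimension is $\sum_{j\in N(k)}x_j$ and splitting it into $x_k$ split inclusions into $\bigoplus P_k^{x_k}$ and $x_k'$ maps factoring through $\beta_X$), after which a single octahedron immediately yields the stated triangle and left-properness is read off from the linear independence built into the choice of basis. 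Your skeleton is sound up to and including the observation that $g\circ a=0$ because $\eta_{P_k}\in\Hom^{1-\omega}(P_k,P_k)=0$ for $\omega\neq 1$ --- that is a nice point. However, there is a genuine gap at the crux of your argument: the sentence ``tracing the relation $\overline{g}\circ b=g$ through the octahedron, one sees that $M_k$ coincides, up to a nonzero scalar, with the matrix encoding the $P_k$-component of $\varphi_X$'' is an assertion, not a proof, and it is exactly the hard part. Justifying it requires computing $t_\Delta\gamma_{j,k}$ on the cone presentation of $t_\Delta P_j$: one must show that $\Hom(t_\Delta P_j,P_k[1-\omega_{u+1}])$ is one-dimensional, that every such map factors through the cone part $\bigoplus_{k'\in N(j)\cap\Delta}P_{k'}[1-\omega_{u+1}]$, and that the induced factorization of $t_\Delta\gamma_{j,k}$ hits the $P_k$-component with a \emph{nonzero} scalar (e.g.\ because $t_\Delta$ is an equivalence). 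None of this is in your write-up, and without it the split-epimorphism claim for $\overline{g}$, hence the whole cancellation, is unsupported.

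Moreover, your supporting claim $\Hom^0(P_k,P_{k'})=\delta_{kk'}\kk$ is false precisely in the case $\omega=0$, which is the case the paper cares most about: there $\Hom^0(P_k,P_k)\cong\kk[t]/(t^2)$ is two-dimensional, so $\overline{g}$ is encoded by matrices over $\kk[t]/(t^2)$ rather than scalar matrices. The argument can be patched (a matrix $M+Nt$ is a split epimorphism iff $M$ has full row rank, since $t$ is nilpotent), but as written the step does not cover $\omega=0$. Finally, your left-properness check has the index sets garbled: by Lemma \ref{lpeq}(2) applied to the constructed triangle, left-properness of $t_\Delta^+X$ is the identity $\dim_\kk\Hom^*(t_\Delta^+X,P_l)=\sum_{j\in N(l)}x_j$ for $l\in V^{u+1}$ (not $l\in V^u$), and it is this case --- not the one you defer --- that follows immediately, since $t_\Delta^{-1}P_l$ is a shift of $P_l$ for $l\in V^{u+1}$ and right-properness of $X$ gives $\dim_\kk\Hom^*(X,P_l)=\sum_{j\in N(l)}x_j$. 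That last observation would actually give you a cleaner finish than the paper's linear-independence argument, but only once the triangle itself has been rigorously established.
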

\begin{proof}
As for the previous Lemma, we will prove only the first assertion, since  second one can be deduced by dual arguments. We fix some $l\in V^{u+1}$. By Lemma \ref{lpeq}, $\Hom^*(P_l,X)$ is of dimension $\sum\limits_{k\in N(l)}x_k$. Hence, it is generated by (1) $x_l$ compositions $P_l\rightarrow \bigoplus\limits_{k \in V^{u+1}} P_k^{x_k}\xrightarrow{\alpha_X}X$, where the first arrow ranges over $x_l$ linearly independent direct inclusions, and (2) $x_l'$ maps $P_l\rightarrow X[\omega-1]$ that after composition with $\beta_X[\omega]$ give $x_l'$ linearly independent maps from $P_l$ to $\bigoplus\limits_{j \in V^u } P_j^{x_j}[\omega_{u+1}]$ annihilated by $\varphi_X[\omega]$. Taking all these morphisms for all $l$ together we get a map 
$$\bigoplus\limits_{k \in V^{u+1}} P_k^{x_k}[\omega_u-1]\oplus \bigoplus\limits_{k \in V^{u+1}} P_k^{x_k'}[-\omega_{u+1}]\xrightarrow{\begin{pmatrix}\alpha_X { [\omega_u-1]}&\gamma\end{pmatrix}}X[\omega_u-1]$$
whose cone is isomorphic to $t_{\Delta}^+X$. Now, applying the octahedral axiom to the composition $\begin{pmatrix}\alpha_X { [\omega_u-1]} &\gamma\end{pmatrix}\circ\begin{pmatrix}id_{Z}\\0\end{pmatrix}=\alpha_X { [\omega_u-1]}$, where $Z$ denotes $\bigoplus\limits_{k \in V^{u+1}} P_k^{x_k}[\omega_u-1]$, we get the following commutative diagram whose rows and columns are triangles

\begin{center}

 {\scriptsize
\begin{tikzcd}
{\bigoplus\limits_{k \in V^{u+1}} P_k^{x_k}[\omega_u-1]} \arrow[r, "\begin{pmatrix}id_{Z}\\0\end{pmatrix}"] \arrow[equal, d] &
\bigoplus\limits_{k \in V^{u+1}} P_k^{x_k}[\omega_u-1]\oplus \bigoplus\limits_{k \in V^{u+1}} P_k^{x_k'}[-\omega_{u+1}]\arrow[r] \arrow[d, "(\alpha_X { {[\omega_u-1]}} \enskip \gamma)"] & \bigoplus\limits_{k \in V^{u+1}} P_k^{x_k'}[-\omega_{u+1}] \arrow[d, "\varphi_{t_\Delta^+X}"] \\
\bigoplus\limits_{k \in V^{u+1}} P_k^{x_k}[\omega_u-1] \arrow[r, "\alpha_X { [\omega_u-1]}"] & X[\omega_u-1] \arrow[r, "\beta_X{[-\omega_u]}"]\arrow[d, "\alpha_{t_\Delta^+X}\beta_X {[-\omega_u]}"]& \bigoplus\limits_{j \in V^u } P_j^{x_j}\arrow[d, "\alpha_{t_\Delta^+X}"] \\ 
& t_{\Delta}^+X\arrow[equal, r] & t_{\Delta}^+X 
\end{tikzcd}
}

\end{center}

 Thus, $t_{\Delta}^+X$ indeed has the required form. A direct inclusion $g \colon P_l\rightarrow \bigoplus\limits_{k \in V^{u+1}} P_k^{x_k'}$ such that the composition
$$
P_l[-\omega_{u+1}]\xrightarrow{g[-\omega_{u+1}]} \bigoplus\limits_{k \in V^{u+1}} P_k^{x_k'}[-\omega_{u+1}]\xrightarrow{\varphi_{t_\Delta^+X}} \bigoplus\limits_{j \in V^u } P_j^{x_j}
$$
is zero would produce a linear dependence between $x_l'$ components of the morphism ${P_l^{x_l'}[-\omega_{u+1}]\rightarrow \bigoplus\limits_{j \in V^u } P_j^{x_j}}$. But they are linearly independent by our construction. Thus, $t_{\Delta}^+X$ is indeed left-proper.
\end{proof}

Now we introduce a very natural definition of {\it a two-term subobject} of a two-term object. We will then study the behaviour of this and some other relations between two-term objects with respect to autoequivalences of the form $t_{\Delta}^{\pm}$. 

\begin{definition} \begin{enumerate}
    \item Let 
$${\small X=cone\left(\bigoplus\limits_{j \in V^u } P_j^{x_j}[-\omega_u] \xrightarrow{\varphi_X} \bigoplus\limits_{k \in V^{u+1}} P_k^{x_k}\right)}\mbox{ and }{\small Y=cone\left(\bigoplus\limits_{j \in V^u } P_j^{y_j}[-\omega_u] \xrightarrow{\varphi_Y} \bigoplus\limits_{k \in V^{u+1}} P_k^{y_k}\right)}$$ be two-term objects. We say that $X$ is a {\it two-term subobject} of $Y$ if there exist split monomorphisms $\iota_u \colon \bigoplus\limits_{j \in V^u } P_j^{x_j}\hookrightarrow \bigoplus\limits_{j \in V^u } P_j^{y_j}$ and $\iota_{u+1} \colon \bigoplus\limits_{k \in V^{u+1} } P_k^{x_k}\hookrightarrow \bigoplus\limits_{k \in V^{u+1} } P_k^{y_k}$ such that $\iota_{u+1}\varphi_X=\varphi_Y\iota_u[-\omega_u]$.

The two-term subobject $X$ of $Y$ is called {\it trivial} if either $X=0$ or both of the maps $\iota_u$ and $\iota_{u+1}$ are isomorphisms. Otherwise $X$ is called a {\it nontrivial two-term subobject} of $Y$. 
\item 
We say that ${f\colon X\rightarrow Y[\omega]}$ is a {\it right socle morphism} if it of the form ${f=\alpha_Y[\omega]f'}$ for some ${f':X\rightarrow \bigoplus\limits_{k \in V^{u+1}} P_k^{y_k}[\omega]}$ such that for any split epimorphism $g:\bigoplus\limits_{k \in V^{u+1}} P_k^{y_k}\rightarrow P_l$ with $l\in V^{u+1}$ the morphism ${g[\omega]f'\alpha_X\colon\bigoplus\limits_{k \in V^{u+1}} P_k^{x_k}\rightarrow P_l[\omega]}$ is not a split epimorphism anymore. 
\end{enumerate}

\end{definition}

\begin{remark} The second condition in the definition of a right socle morphism is valid automatically if $X$ is right-proper or $\omega\not=0$. Moreover, if $\omega\not=0$ and $X$ is left-proper, then any morphism of the form $X\xrightarrow{f}Y[\omega]$ is automatically right socle, because in this case $\Hom_\mathfrak{D}(P_k,P_j[1+\omega_{u+1}])=0$ for any $k\in V^{u+1}$, $j\in V^u$ and $g[1-\omega_u]\beta_X[1]=0$ for any $g\colon\bigoplus\limits_{j \in V^u } P_j^{x_j}\rightarrow \bigoplus\limits_{j \in V^u } P_j^{y_j}[\omega]$. In fact, the definition of a right socle morphism is introduced solely to cover the case $\omega=0$ which nevertheless is of special interest to us in view of an application to the derived Picard groups of algebras. As it is easy to see, most of the claims about right socle morphisms we provide below are trivial for $\omega \neq 0$.
\end{remark}

\begin{lemma}\label{presinc} Let $X$ and $Y$ be as above. Suppose also that both $X$ and $Y$ are right-proper. If $X$ is a nontrivial two-term subobject of $Y$, then $t_{\Delta}^+X$ is a non-trivial two-term subobject of $t_{\Delta}^+Y$ for any $\rs(Y)\subseteq\Delta \subseteq V^{u+1}$.
\end{lemma}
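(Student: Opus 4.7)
The plan is to produce the required split monomorphisms on the defining triangle of $t_\Delta^+X$ by first constructing a ``quotient'' two-term object $Z$ for the inclusion $X\hookrightarrow Y$, then applying $t_\Delta^+$ to the triangle $X\to Y\to Z\to X[1]$, and finally reading off the desired split monomorphisms. First I would use (TR3) to extend the commutative square $(\iota_u[-\omega_u],\iota_{u+1})$ to a morphism of triangles with third component some $\phi\colon X\to Y$. Because both $\iota_u$ and $\iota_{u+1}$ are split monomorphisms, choosing complements allows one to write $\varphi_Y$ as a block lower-triangular matrix with diagonal blocks $\varphi_X$ and a certain $\varphi_Z\colon \bigoplus P_j^{y_j-x_j}[-\omega_u]\to \bigoplus P_k^{y_k-x_k}$, and a standard $3\times 3$-lemma argument identifies the cone of $\phi$ as the two-term object $Z$ with defining morphism $\varphi_Z$. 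A short check using the block form shows that $Z$ inherits right-properness from $Y$: any split epimorphism $h\colon \bigoplus P_k^{y_k-x_k}\to P_l$ extends by zero on the first summand to a split epimorphism $\tilde h$ out of $\bigoplus P_k^{y_k}$, and the composition $\tilde h\varphi_Y$ reduces to $h\varphi_Z$ on the second summand, hence is nonzero by right-properness of $Y$.

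Since $\rs(X)$ and $\rs(Z)$ are both contained in $\rs(Y)\subseteq\Delta$, Lemma \ref{2toe} applies and $t_\Delta^+X$, $t_\Delta^+Y$, $t_\Delta^+Z$ are all left-proper two-term objects. Applying $t_\Delta^+$ to the triangle $X\xrightarrow{\phi} Y\to Z\to X[1]$ yields a triangle of left-proper two-term objects of the same parity, and I would then extract the split monomorphisms between the defining triangles of $t_\Delta^+X$ and $t_\Delta^+Y$ as follows. The right parts are indexed by $V^u$ with multiplicities $x_j$ and $y_j$, matching those of $X$ and $Y$, so the split monomorphism $\iota_u$ itself works. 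For the left parts, I would use the explicit construction of $\varphi_{t_\Delta^+X}$ from the proof of Lemma \ref{2toe}: the multiplicity $x_k'$ for $k\in\Delta$ coincides with the dimension of the kernel of the post-composition map $\varphi_X^*\colon \Hom^{\omega_{u+1}}(P_k,\bigoplus_j P_j^{x_j})\to \Hom^\omega(P_k,\bigoplus_k P_k^{x_k})$, and $\varphi_{t_\Delta^+X}$ realizes this kernel as maps from $P_k[-\omega_{u+1}]$. The commutativity $\iota_{u+1}\varphi_X=\varphi_Y\iota_u[-\omega_u]$ together with $\iota_u^*$ being a split monomorphism shows that $\ker\varphi_X^*$ embeds as a direct summand of $\ker\varphi_Y^*$ at each $k\in\Delta$, yielding a split monomorphism $\bigoplus P_k^{x_k'}[-\omega_{u+1}]\hookrightarrow \bigoplus P_k^{y_k'}[-\omega_{u+1}]$ that intertwines $\varphi_{t_\Delta^+X}$ with $\varphi_{t_\Delta^+Y}$.

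For non-triviality I would argue by contradiction. Since $X\neq 0$ implies $t_\Delta^+X\neq 0$ (as $t_\Delta^+$ is a shift of an autoequivalence), triviality of the two-term subobject $t_\Delta^+X\hookrightarrow t_\Delta^+Y$ would force both constructed split monomorphisms to be isomorphisms, so $x_j=y_j$ for every $j\in V^u$ and $x_k'=y_k'$ for every $k\in\Delta$. Substituting $x_j=y_j$ into $x_k'=\sum_{j\in N(k)}x_j-x_k$ and $y_k'=\sum_{j\in N(k)}y_j-y_k$ yields $x_k=y_k$ for every $k\in\Delta\supseteq\rs(X)\cup\rs(Y)$, so both $\iota_u$ and $\iota_{u+1}$ would be isomorphisms and $X$ would already be a trivial two-term subobject of $Y$, contradicting the hypothesis. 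The main obstacle I anticipate is in the middle step: one must verify that the maps between defining triangles really are split monomorphisms of the form required by the definition (rather than arbitrary injections of multiplicities) and that they genuinely commute with $\varphi_{t_\Delta^+X}$ and $\varphi_{t_\Delta^+Y}$. This amounts to careful bookkeeping built on top of the explicit construction from the proof of Lemma \ref{2toe}.
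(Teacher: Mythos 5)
Your proposal is correct and its core step --- using the description of the components of $\varphi_{t_{\Delta}^+X}|_{P_l^{x_l'}}$ as a basis of $\Ker\Hom_{\mathfrak{D}}(P_l,\varphi_X)$ together with the commutativity $\iota_{u+1}\varphi_X=\varphi_Y\iota_u[-\omega_u]$ to induce a split monomorphism on the $V^{u+1}$-indexed parts, then comparing multiplicities via $x_k'=\sum_{j\in N(k)}x_j-x_k$ for non-triviality --- is essentially the paper's argument, the only (cosmetic) difference being that the paper verifies split injectivity of $\iota_{u+1}'$ by contradiction from the left-properness of $t_{\Delta}^+X$ rather than from the rank of your scalar matrix. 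The construction of the quotient object $Z$ and the triangle $X\to Y\to Z$ in your first paragraph is never used in the subsequent extraction of the split monomorphisms and can be omitted.
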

\begin{proof}
It is clear that $\rs(X)\subseteq\rs(Y)$, and hence both objects $t_{\Delta}^+X$ and $t_{\Delta}^+Y$ are two-term by Lemma \ref{2toe}. Moreover, recall from the proof of Lemma \ref{2toe} that one has 
$$t_{\Delta}^+X=cone\left(\bigoplus\limits_{k \in V^{u+1}} P_k^{x_k'}[-\omega_{u+1}]\xrightarrow{\varphi_{t_{\Delta}^+X}}\bigoplus\limits_{j \in V^u } P_j^{x_j}\right)$$
where $x_l'=0$ for any $l\in V^{u+1}\setminus\Delta$ and for any $l\in \Delta$ the components of the map $\varphi_{t_{\Delta}^+X}|_{P_l^{x_l'}}$ constitute the basis of the vector space $\Ker\Hom_\mathfrak{D}(P_l,\varphi_X)$.
 The morphism $\varphi_{t_{\Delta}^+Y}$ satisfies analogous conditions.

\begin{center}
\begin{tikzcd}
P_l \arrow[r, hook, "g"] & \bigoplus\limits_{k \in V^{u+1}} P_k^{x_k'}[-\omega_{u+1}] \arrow[r, "\varphi_{t_\Delta^+X}"] \arrow[d, "\iota_{u+1}'"] & \bigoplus\limits_{j \in V^{u}} P_j^{x_j} \arrow[r, "\varphi_X{[\omega_u]}"] \arrow[d, hook, "\iota_u"]  &
\bigoplus\limits_{k \in V^{u+1}} P_k^{x_k}[\omega_u] \arrow[d, hook, "\iota_{u+1}{[\omega_u]}"]  \\ & \bigoplus\limits_{k \in V^{u+1}} P_k^{y_k'}[-\omega_{u+1}] \arrow[r, "\varphi_{t_\Delta^+Y}"] 
& \bigoplus\limits_{j \in V^{u}} P_j^{y_j} \arrow[r,"\varphi_Y{[\omega_u]}"] & \bigoplus\limits_{k \in V^{u+1}} P_k^{y_k}[\omega_u] \\
\end{tikzcd} 
\end{center}

 Since $\varphi_Y{[\omega_u]}\iota_u\varphi_{t_{\Delta}^+X}=0$, there exists a map $\iota_{u+1}'\colon \bigoplus\limits_{k \in V^{u+1}} P_k^{x_k'}\rightarrow \bigoplus\limits_{k \in V^{u+1}} P_k^{y_k'}$ such that $\iota_u\varphi_{t_{\Delta}^+X}=\varphi_{t_{\Delta}^+Y}\iota_{u+1}'[-\omega_{u+1}]$.
It remains to show that $\iota_{u+1}'$ is a split monomorphism. Suppose it is not. Then for some $l\in V^{u+1}$ there exists a direct inclusion $g:P_l\rightarrow \bigoplus\limits_{k \in V^{u+1}} P_k^{x_k'}$ such that $\iota_{u+1}'g$ is not a direct inclusion. In this case we have $\iota_u\varphi_{t_{\Delta}^+X}g=\varphi_{t_{\Delta}^+Y}\iota_{u+1}'g=0$, and hence $\varphi_{t_{\Delta}^+X}g=0$. But by Lemma \ref{2toe} we know that $t_{\Delta}^+X$ is left-proper. This contradiction implies that $\iota_{u+1}'$ is a split monomorphism, and hence we are done.
\end{proof}

\begin{lemma}\label{rightred} Let $X$ and $Y$ be as in the previous Lemma. Then any right socle morphism $f:X\rightarrow Y[\omega]$ factors through some right socle morphism $X'\rightarrow Y[\omega]$, where $X'$ is a two-term object such that $\rs(X')\subset\rs(Y)$.
\end{lemma}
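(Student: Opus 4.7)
The plan is to iteratively shave off from the right-support of $X$ any vertex $l\in V^{u+1}\setminus\rs(Y)$, one at a time, until we reach a two-term object with $\rs\subseteq\rs(Y)$. If $\rs(X)\subseteq\rs(Y)$ already, take $X'=X$. Otherwise, pick some $l\in\rs(X)\setminus\rs(Y)$ and construct a two-term $X'$ with $\rs(X')\subseteq\rs(X)\setminus\{l\}$, together with a morphism $\pi\colon X\to X'$ and a right socle morphism $g\colon X'\to Y[\omega]$ such that $f=g\pi$. Iterating terminates since $|\rs|$ strictly decreases, and the final object has right-support inside $\rs(Y)$.

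The crucial computation is that $f'\alpha_X\iota_l=0$, where $\iota_l\colon P_l^{x_l}\hookrightarrow B=\bigoplus_{k\in V^{u+1}}P_k^{x_k}$ is the inclusion of the $P_l$-summand. Indeed, this composition lies in $\Hom^{\omega}\!\left(P_l^{x_l},\bigoplus_{k\in V^{u+1}}P_k^{y_k}\right)$; the bipartite structure of $\Gamma$ ensures that any two distinct vertices in $V^{u+1}$ are non-adjacent, so $\Hom^{*}(P_l,P_k)=0$ for $k\in V^{u+1}\setminus\{l\}$, and the remaining $k=l$ component vanishes because $y_l=0$ by the choice $l\notin\rs(Y)$.

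Next, apply the octahedral axiom to the composition $P_l^{x_l}\xrightarrow{\iota_l}B\xrightarrow{\alpha_X}X$, using the split triangle $P_l^{x_l}\to B\xrightarrow{\pi'}B'\to P_l^{x_l}[1]$ (with $B'=\bigoplus_{k\in V^{u+1}\setminus\{l\}}P_k^{x_k}$) and the defining triangle $A\xrightarrow{\varphi_X}B\xrightarrow{\alpha_X}X\to A[1]$ of $X$. Setting $X'=\mathrm{cone}(\alpha_X\iota_l)$, one obtains a triangle $A\xrightarrow{\pi'\varphi_X}B'\xrightarrow{\alpha_{X'}}X'\to A[1]$, exhibiting $X'$ as a two-term object with $\rs(X')\subseteq\rs(X)\setminus\{l\}$, and a morphism $\pi\colon X\to X'$ satisfying $\pi\alpha_X=\alpha_{X'}\pi'$. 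Applying $\Hom\!\left(-,\bigoplus_{k}P_k^{y_k}[\omega]\right)$ to the triangle $P_l^{x_l}\to X\xrightarrow{\pi}X'\to P_l^{x_l}[1]$ and using $f'\alpha_X\iota_l=0$ produces a factorization $f'=f''\pi$ for some $f''\colon X'\to\bigoplus_{k}P_k^{y_k}[\omega]$; then $g:=\alpha_Y[\omega]f''$ yields $f=g\pi$.

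It remains to verify that $g$ is a right socle morphism, which is only nontrivial when $\omega=0$ (by the remark following the definition). For any split epimorphism $h\colon \bigoplus_{k}P_k^{y_k}\to P_{l''}$ with $l''\in V^{u+1}$, the identity $\pi\alpha_X=\alpha_{X'}\pi'$ gives $h[\omega]f''\alpha_{X'}\pi'=h[\omega]f'\alpha_X$, which is not a split epimorphism by assumption that $f$ is right socle; since $\pi'$ is itself a split epimorphism, $h[\omega]f''\alpha_{X'}$ cannot be a split epimorphism either. The main delicate point is threading the octahedral identifications through so that both the factorization of $f'$ and the right socle condition on $g$ are visibly inherited from those of $f$; this is what ensures the induction can be repeated until $\rs(X')\subseteq\rs(Y)$.
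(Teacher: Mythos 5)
Your proof is correct and follows essentially the same route as the paper: the vanishing $f'\alpha_X\iota=0$ via bipartiteness of $\Gamma$ together with $y_l=0$, the octahedral axiom identifying $\mathrm{cone}(\alpha_X\iota)$ as a two-term object with defining map $\pi'\varphi_X$, and the right socle check using that $\pi'$ is a (split) epimorphism. The only difference is that the paper removes the whole set $\rs(X)\setminus\rs(Y)$ in a single step rather than one vertex at a time, which is purely cosmetic.
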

\begin{proof} Let $f=\alpha_Y[\omega]f':X\rightarrow Y{[\omega]}$ be a decomposition of the right socle morphism $f$ and let ${\Delta=\rs(X)\setminus\rs(Y)}$. By $\iota$ we denote the direct inclusion ${\bigoplus\limits_{k \in \Delta} P_k^{x_k}\hookrightarrow\bigoplus\limits_{k \in V^{u+1}} P_k^{x_k}}$ and by $\pi$ the split epimorphism ${\bigoplus\limits_{k \in V^{u+1}} P_k^{x_k}\twoheadrightarrow \bigoplus\limits_{k \in V^{u+1}\setminus\Delta} P_k^{x_k}}$. Since ${\Hom_\mathfrak{D}\left(\bigoplus\limits_{k \in \Delta} P_k^{x_k}, \bigoplus\limits_{k \in V^{u+1}} P_k^{y_k}{[\omega]}\right)=0}$, one has $f'\alpha_X\iota=0$, and hence $f'$ factors through some morphism $f'':X'\rightarrow \bigoplus\limits_{k \in V^{u+1}} P_k^{y_k}[\omega]$, where $X'=cone(\alpha_X\iota)$. Now by the octahedral axiom we have  
$$X'\cong cone\left(\bigoplus\limits_{j \in V^u } P_j^{x_j}[-\omega_u] \xrightarrow{\pi\varphi_X} \bigoplus\limits_{k \in V^{u+1}\setminus\Delta} P_k^{x_k}\right),$$
i.e. $X'$ is as required.  

\begin{center}
\begin{tikzcd}
& \bigoplus\limits_{j \in V^u} P_j^{x_j} [-\omega_u] \arrow[r, bend right=1, equal] \arrow[d, "\varphi_X"] &  \bigoplus\limits_{j \in V^u} P_j^{x_j} [-\omega_u]  \arrow[d, "\pi \varphi_X"] \\ 
\bigoplus\limits_{k\in\Delta} P_k^{x_k} \arrow[r, hook, "\iota"] \arrow[d, equal] & \bigoplus\limits_{k \in V^{u+1}} P_k^{x_k} \arrow[r, two heads, "\pi"] \arrow[d, "\alpha_X"] & \bigoplus\limits_{k \in V^{u+1}\setminus \Delta} P_k^{x_k} \arrow[d, "\alpha_{X'}"] \\ 
\bigoplus\limits_{k\in\Delta} P_k^{x_k} \arrow[r, "\alpha_X \iota"] & X \arrow[r] \arrow[dr, "f'"] & X' \arrow[d, "f''"] \\ 
& & \bigoplus\limits_{k\in V^{u+1}} P_k^{y_k} [\omega] \arrow[r, "g{[\omega]}"]& P_l[\omega]
\end{tikzcd}
\end{center}

It remains to show that the morphism $\alpha_Y[\omega]f''$ is right socle. Suppose that $g \colon { \bigoplus\limits_{k \in V^{u+1}} P_k^{y_k}}\rightarrow P_l$ is a split epimorphism. We need to show that $g{[ \omega]}f''\alpha_{X'}$ is not a split epimorphism. Observe that $\pi$ is an epimorphism and $gf'\alpha_X$ is not a split epimorphism, since $f$ is right socle. On the other hand, we have $f''\alpha_{X'}\pi=f'\alpha_X$ by construction. Hence $g{[\omega]}f'' \alpha_{X'}$ is not a split epimorphism as well.

\end{proof}

\begin{lemma}\label{simpvan} If $X$ is right-proper and $l\in V^{u+1}$, then any right socle morphism from $P_l$ to $X[\omega]$ is zero.
\end{lemma}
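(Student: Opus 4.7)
My plan is to apply the cohomological functor $\Hom(P_l,-)$ to the defining triangle of $X$ and read off the relevant information from the resulting long exact sequence in degree $\omega$. Writing out the triangle
$$X[-1]\xrightarrow{\beta_X}\bigoplus_{j\in V^u}P_j^{x_j}[-\omega_u]\xrightarrow{\varphi_X}\bigoplus_{k\in V^{u+1}}P_k^{x_k}\xrightarrow{\alpha_X}X,$$
since $l\in V^{u+1}$ and $\Hom^*(P_l,P_k)=0$ for every $k\in V^{u+1}\setminus\{l\}$, the picture collapses to the summand $P_l^{x_l}$: any morphism $P_l\to\bigoplus_k P_k^{x_k}[\omega]$ automatically lands in $\Hom^\omega(P_l,P_l^{x_l})$. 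In particular, $f=\alpha_X[\omega]f'=0$ iff $f'$ lies in the image of $\varphi_X^*\colon\Hom^\omega(P_l,\bigoplus_j P_j^{x_j}[-\omega_u])\to\Hom^\omega(P_l,P_l^{x_l})$, by exactness of the long exact sequence at the middle term.

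The bulk of the argument will be the identification of this image as the ``socle'' subspace $\kk^{x_l}\!\cdot t_l\subseteq\Hom^\omega(P_l,P_l^{x_l})$ of dimension $x_l$. Writing the component of $\varphi_X$ between $P_j^{x_j}[-\omega_u]$ and the $P_l^{x_l}$-summand as a scalar matrix $B_{l,j}$ (times the fixed generator $\gamma_{j,l}$), a direct computation shows that $\varphi_X^*$ sends the basis element $\gamma_{l,j}\otimes e_r$ to $(B_{l,j}e_r)\cdot(\gamma_{j,l}\gamma_{l,j})$. The key observation is that $\gamma_{j,l}\gamma_{l,j}\in\Hom^\omega(P_l,P_l)$ is a nonzero scalar multiple of $t_l$ with \emph{zero} $\mathrm{id}_{P_l}$-component, even when $\omega=0$: a nonzero $\mathrm{id}$-component would make $\gamma_{j,l}\gamma_{l,j}$ invertible in the graded local ring $\End^*(P_l)\cong\kk[t]/(t^2)$, forcing $\gamma_{l,j}$ to be a split monomorphism and hence $P_l$ to be a direct summand of the indecomposable $P_j$, contradicting $l\ne j$. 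Combining this with right-properness (which by Lemma \ref{lpeq} guarantees that the block matrix $[B_{l,j_1}\,|\,B_{l,j_2}\,|\,\cdots]$, $j_v\in N(l)$, has full row rank $x_l$), the image of $\varphi_X^*$ is precisely $\kk^{x_l}\!\cdot t_l$.

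It remains to check that the right socle hypothesis places $f'$ inside this socle subspace. When $\omega\ne 0$ this is automatic, since $\Hom^\omega(P_l,P_l^{x_l})=\kk^{x_l}\!\cdot t_l$ to begin with. When $\omega=0$, however, $\Hom^0(P_l,P_l^{x_l})=\kk^{x_l}\!\cdot\mathrm{id}\oplus\kk^{x_l}\!\cdot t_l$, and one really needs the second clause of the definition: any nonzero $\mathrm{id}$-component of $f'$ would produce, via a suitable split epimorphism $g\colon P_l^{x_l}\to P_l$, a split epimorphism $g[\omega]f'\colon P_l\to P_l$, contradicting the hypothesis. In either case $f'\in\im\varphi_X^*=\ker\alpha_X^*$, so $f=\alpha_X[\omega]f'=0$. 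The main (in fact only) subtlety of the argument is the $\omega=0$ case, where both the radicality of $\gamma_{j,l}\gamma_{l,j}$ and the use of the right socle condition become essential rather than vacuous; for $\omega\ne 0$ the statement is a direct consequence of the long exact sequence and right-properness of $X$.
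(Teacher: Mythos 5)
Your proof is correct and follows essentially the same route as the paper: the right socle condition forces $f'$ to have no invertible (identity) component, and right-properness then kills $\alpha_X[\omega]f'$. The only difference is that the paper disposes of the second step in one line by citing Lemma \ref{lpeq}, whereas you re-derive the relevant implication explicitly via the long exact sequence and the rank computation for $\varphi_X$ — which is precisely the content of the proof of Lemma \ref{lpeq}.
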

\begin{proof} Let $f=\alpha_X[\omega]f':P_l\rightarrow X$ be a right socle morphism. Then $f'$ is not a split monomorphism by the definition of a right socle morphism. Then $f=\alpha_X[\omega]f'=0$ by Lemma \ref{lpeq}.
\end{proof}

\begin{lemma}\label{pressoc} Let $X$ and $Y$ be as above. Suppose that both $X$ and $Y$ are right-proper and $\rs(X)\subset \rs(Y)$. Then for any right socle $f:X\rightarrow Y[\omega]$ and any $\rs(Y)\subseteq\Delta \subseteq V^{u+1}$ the morphism $t_{\Delta}^+f:t_{\Delta}^+X\rightarrow t_{\Delta}^+Y[\omega]$ is right socle as well.
\end{lemma}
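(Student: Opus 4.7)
For $\omega\neq 0$ the statement is immediate from the remark following the definition of a right socle morphism: since $t_\Delta^+X$ is left-proper by Lemma~\ref{2toe}, every morphism $t_\Delta^+X\to t_\Delta^+Y[\omega]$ is automatically right socle. So I focus on the substantive case $\omega=0$.

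My plan is to construct $f''\colon t_\Delta^+X\to\bigoplus_{j\in V^u} P_j^{y_j}$ with $\alpha_{t_\Delta^+Y}f''=t_\Delta^+f$ via a morphism of the $3\times 3$ octahedral diagrams from the proof of Lemma~\ref{2toe} for $X$ and for $Y$. The input data are the map $f[\omega_u-1]\colon X[\omega_u-1]\to Y[\omega_u-1]$ on the middle object and $(f'\alpha_X)[\omega_u-1]$ on the top-left; note that $f'\alpha_X\colon\bigoplus_k P_k^{x_k}\to\bigoplus_k P_k^{y_k}$ is block-diagonal by $k\in V^{u+1}$ since $\Hom^*(P_k,P_{k'})=0$ for distinct $k,k'\in V^{u+1}$. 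Functoriality of cones, available thanks to the enhancement, produces compatible vertical maps at every vertex of the diagrams; the bottom-right vertical must coincide with $t_\Delta^+f$ by functoriality of $t_\Delta^+$, and the middle-right vertical yields a morphism $\psi\colon\bigoplus_{j\in V^u} P_j^{x_j}\to\bigoplus_{j\in V^u} P_j^{y_j}$ satisfying $\alpha_{t_\Delta^+Y}\psi=t_\Delta^+f\circ\alpha_{t_\Delta^+X}$. The universal property of the triangle defining $t_\Delta^+X$ then extracts the sought $f''$ with $\alpha_{t_\Delta^+Y}f''=t_\Delta^+f$ and $f''\alpha_{t_\Delta^+X}=\psi$.

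To verify that $f''$ is right socle, observe that $\psi=f''\alpha_{t_\Delta^+X}$ is block-diagonal by $j\in V^u$ because $\Hom^*(P_j,P_{j'})=0$ for distinct $j,j'\in V^u$. For $\omega=0$, being right socle is equivalent to the vanishing of the identity component of each diagonal block of $\psi$: indeed $\Hom^0(P_j,P_j)=\kk\oplus\kk t$, and only morphisms $P_j^a\to P_j$ with nonzero identity component are split epimorphisms. Now commutativity in the middle row of the morphism of diagrams gives
$$\psi\,\beta_X[\omega_u]\;=\;\beta_Y[\omega_u]\circ f[\omega_u-1]\;=\;\beta_Y[\omega_u]\,\alpha_Y[\omega_u-1]\,f'[\omega_u-1]\;=\;0,$$
using $\beta_Y[1]\alpha_Y=0$. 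Hence $\psi$ factors as $\eta\circ\varphi_X[\omega_u]$ for some $\eta\colon\bigoplus_k P_k^{x_k}[\omega_u]\to\bigoplus_j P_j^{y_j}$, so each diagonal component $P_j\to P_j$ of $\psi$ is a sum of compositions $P_j\to P_k\to P_j$ for $k\in N(j)$, each of which lies in $\kk\cdot t\subset\Hom^0(P_j,P_j)$ by the defining spherical pairing. Thus $\psi$ has vanishing identity diagonal, as required.

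The principal technical obstacle lies in the first step: setting up the morphism of octahedra with correct shifts, and verifying that the induced bottom-right vertical agrees with $t_\Delta^+f$. This relies essentially on the enhancement of $\mathfrak{D}$ and careful bookkeeping of the various connecting maps.
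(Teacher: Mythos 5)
Your reduction to $\omega=0$ and your endgame are fine: once one has $t_{\Delta}^+f=\alpha_{t_{\Delta}^+Y}f''$ together with $f''\alpha_{t_{\Delta}^+X}=\psi$ and $\psi\beta_X=0$, the factorization $\psi=\eta\varphi_X$ does force every diagonal block of $\psi$ to lie in the radical (a composite $P_j\to P_k\to P_j$ with $k\in N(j)$ cannot be an isomorphism since $P_k$ is indecomposable and not isomorphic to $P_j$), and then $g\psi$ is never a split epimorphism. The gap is in the middle step, which is exactly the hard part of this lemma. Neither ``functoriality of cones'' nor a ``universal property of the triangle defining $t_{\Delta}^+X$'' delivers the factorization $t_{\Delta}^+f=\alpha_{t_{\Delta}^+Y}f''$. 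Even granting a coherent morphism of the two $3\times 3$ diagrams whose bottom-right vertical is $t_{\Delta}^+f$ (a compatibility you flag as the ``principal technical obstacle'' but do not verify), all you obtain is the commuting square $\alpha_{t_{\Delta}^+Y}\psi=(t_{\Delta}^+f)\alpha_{t_{\Delta}^+X}$. That square does \emph{not} imply that $t_{\Delta}^+f$ factors through $\alpha_{t_{\Delta}^+Y}$: the obstruction $\beta_{t_{\Delta}^+Y}[1]\circ t_{\Delta}^+f$ is then only known to vanish after precomposition with $\alpha_{t_{\Delta}^+X}$, hence it factors through $\beta_{t_{\Delta}^+X}[1]$ and lands in a group of the shape $\Hom\big(\bigoplus_k P_k^{x_k'}[1],\,\bigoplus_k P_k^{y_k'}[1]\big)$, which is nonzero for $\omega=0$ since $\Hom^0(P_k,P_k)=\kk\oplus\kk t$. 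So the simultaneous ``extraction'' of $f''$ with $\alpha_{t_{\Delta}^+Y}f''=t_{\Delta}^+f$ and $f''\alpha_{t_{\Delta}^+X}=\psi$ is unjustified as stated.

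This is precisely where the paper does its work: it first shows $(t_{\Delta}^+f)\alpha_{t_{\Delta}^+X}\beta_X=\alpha_{t_{\Delta}^+Y}\beta_Y f[-1]=0$ using only the naturality of the connecting morphism $X[-1]\to t_{\Delta}^+X$ of the spherical-twist triangle (which is honestly functorial, so no octahedron comparison is needed), deduces that $t_{\Delta}^+f$ factors through $t_{\Delta}^+X\to\bigoplus_k P_k^{x_k}\oplus\bigoplus_k P_k^{x_k'}[1]$, and then kills $\beta_{t_{\Delta}^+Y}[1]t_{\Delta}^+f$ componentwise using the left-properness of $t_{\Delta}^+X$ and $t_{\Delta}^+Y$ and the vanishing of $\Hom$ in degree $1$ between the relevant sums of spherical objects. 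Note also that the paper never asserts $f''\alpha_{t_{\Delta}^+X}\beta_X=0$ on the nose (only its composite with $\alpha_{t_{\Delta}^+Y}$ vanishes), and has to correct $f''\alpha_{t_{\Delta}^+X}$ by a radical term $\varphi_{t_{\Delta}^+Y}f'''$ before running the argument you give at the end; your cleaner identity $\psi\beta_X=0$ is another casualty of the unestablished compatibility $f''\alpha_{t_{\Delta}^+X}=\psi$. To repair the proof you should either carry out in full the enhanced-cone bookkeeping you defer, or, more economically, replace the octahedron step by the paper's direct computation of $\beta_{t_{\Delta}^+Y}[1]t_{\Delta}^+f$.
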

\begin{proof} To simplify the notations here we consider only the case $\omega=0$. Other cases are obvious due to the remark we made after the definition of a right socle morphism. Note that ${\omega_u=\omega_{u+1}=0}$ by our assumption.
Due to the proof of Lemma \ref{2toe} there is a triangle
$$
 t_{\Delta}^+X[-1]\xrightarrow{\begin{pmatrix}\psi\\ \beta_{t_{\Delta}^+X}\end{pmatrix}}\bigoplus\limits_{k \in V^{u+1}} P_k^{x_k}[-1]\oplus \bigoplus\limits_{k \in V^{u+1}} P_k^{x_k'}\xrightarrow{\begin{pmatrix}\alpha_X&\gamma\end{pmatrix}} X[-1]\xrightarrow{\alpha_{t_{\Delta}^+X}\beta_X}t_{\Delta}^+X
$$
emerging from the definition of a spherical twist.  There is also an analogous triangle for $Y$. Observe that $\beta_Y[1]f = 0$, because $f$ factors through $\alpha_Y$. Then $t_{\Delta}^+f$ satisfies the condition
$(t_{\Delta}^+f)\alpha_{t_{\Delta}^+X}\beta_X=\alpha_{t_{\Delta}^+Y}\beta_Yf[-1]=0.$

 Thus, $t_\Delta^+f$ factors through $\begin{pmatrix}\psi[1]\\ \beta_{t_{\Delta}^+X} [1] \end{pmatrix}$, i.e. 
$\beta_{t_{\Delta}^+Y}[1]t_{\Delta}^+f=(\beta_{t_{\Delta}^+Y}[1]f')\begin{pmatrix}\psi[1]\\ \beta_{t_{\Delta}^+X}[1]\end{pmatrix}$
 for some morphism
$f' \colon \bigoplus\limits_{k \in V^{u+1}} P_k^{x_k}\oplus \bigoplus\limits_{k \in V^{u+1}} P_k^{x_k'}[1]\rightarrow t_{\Delta}^+Y.$
Since $\beta_{t_{\Delta}^+Y}[1]f'$ is annihilated by $\varphi_{t_{\Delta}^+Y}{ [1]}$ and $t_{\Delta}^+Y$ is left-proper, all components of $\beta_{t_{\Delta}^+Y}[1]f'$ are not isomorphisms, i.e. all morphisms
$\bigoplus\limits_{k \in V^{u+1}} P_k^{x_k}\oplus \bigoplus\limits_{k \in V^{u+1}} P_k^{x_k'}[1]\rightarrow P_l[1]$
constituting $\beta_{t_{\Delta}^+Y}[1]f'$ are not split epimorphisms.
Remark that ${\beta_{t_{\Delta}^+X}[1]}$ annihilates all morphisms of the form ${\bigoplus\limits_{k \in V^{u+1}} P_k^{x_k'}[1]\rightarrow P_l[1]}$ with $l\in V^{u+1}$ that are not split epimorphisms by the left-properness of $t_{\Delta}^+X$.

Since $\Hom_\mathfrak{D}\left(\bigoplus\limits_{k \in V^{u+1}} P_k^{x_k},\bigoplus\limits_{k \in V^{u+1}} P_k^{y_k}[1]\right)=0$, we have $\beta_{t_{\Delta}^+Y}[1]t_{\Delta}^+f=0$. Therefore, $t_{\Delta}^+f=\alpha_{t_{\Delta}^+Y} f''$ for some $f'' \colon t_{\Delta}^+X\rightarrow \bigoplus\limits_{j \in V^{u}} P_j^{y_j}$. Suppose now that $gf''\alpha_{t_{\Delta}^+X}$ is a split epimorphism for some $g \colon \bigoplus\limits_{j \in V^{u}} P_j^{y_j}\rightarrow P_l$. Then $gf''\alpha_{t_{\Delta}^+X}\beta_X\not=0$, and hence ${f''\alpha_{t_{\Delta}^+X}\beta_X\not=0}$. On the other hand, we have proved earlier that $\alpha_{t_{\Delta}^+Y}f''\alpha_{t_{\Delta}^+X}\beta_X=(t_{\Delta}^+f)\alpha_{t_{\Delta}^+X}\beta_X=0$.
Then $f''\alpha_{t_{\Delta}^+X}\beta_X$ factors as $\varphi_{t_\Delta^+ Y} \theta $ for some morphism ${\theta \colon X[-1] \rightarrow \bigoplus\limits_{k \in V^{u+1}} P_k^{y_k'}}$.
 Since $\Hom_\mathfrak{D}\left(\bigoplus\limits_{k \in V^{u+1}} P_k^{x_k}[-1],\bigoplus\limits_{k \in V^{u+1}} P_k^{y_k'}\right)=0$, $\theta$ 
factors through $\beta_X$. Thus,  we have $f''\alpha_{t_{\Delta}^+X}\beta_X=\varphi_{t_\Delta^+Y} \theta =  \varphi_{t_{\Delta}^+Y}f'''\beta_X$ for some $f''' \colon \bigoplus\limits_{j \in V^{u}} P_j^{x_j} \rightarrow\bigoplus\limits_{k \in V^{u+1}} P_k^{y_k'}$. All components of $\varphi_{t_{\Delta}^+Y}f'''$ are not isomorphisms, so the morphism $g(f''\alpha_{t_{\Delta}^+X}-\varphi_{t_{\Delta}^+Y}f''')$ is a split epimorphism, and hence $(f''\alpha_{t_{\Delta}^+X}-\varphi_{t_{\Delta}^+Y}f''')\beta_X$ cannot be zero. We have a contradiction, which finishes the proof of the lemma.
\end{proof}

\section{Factorization}\label{2tpt}

We return to the context of Lemma \ref{princ}. Until the end of the paper we assume that $\omega\le 0$. Recall that  $\sigma_{2u}=m+u(2-\omega)$, ${\sigma_{2u+1}=m+u(2-\omega)+1-\omega_1}$. Now suppose that we have sets $\Delta_0,\dots,\Delta_p$ for some $p\ge 1$ such that $\Delta_0=\{i\}$ for some $i\in V^0$.
Let us recall that the numbers $\chi_u(k)$ ($0\le u\le p$, $k\in\Delta_u$) are defined inductively by $\chi_0(i):=1$ and $\chi_u(k):=\sum\limits_{t\in N(k)}\chi_{u-1}(t)-\chi_{u-2}(k)$ for $u\ge 1$, where we set for convenience $\chi_v(t)=0$ if $v<0$ or $t\not\in\Delta_v$.
Suppose also that the following conditions hold for $1 \leq u \leq p:$
\begin{enumerate}[label=\textnormal{(\arabic*)}]
     \item\label{factor} $\alpha = s_{\Delta_0}\dots s_{\Delta_u} \beta_u$ for some $\beta_u\in B_\Gamma^+$ with $l(\beta_u)=l(\alpha)-\sum\limits_{v=0}^{u}|\Delta_v|$
  \item\label{lying} $\Delta_{u-2}\subseteq\Delta_u\subseteq N(\Delta_{u-1})$, where we set $\Delta_{-1}=\varnothing$ for convenience.
  \item\label{nds} $P_l$ is not a direct summand of $(T_{\beta_u})_{[\sigma_{u-3}+1,\sigma_{u-1}]}$ for any $l \in V^u$.
\item\label{md} The minimal nonzero degree of $T_{\beta_u}$ is not smaller than $\sigma_{u-2}+1$.
\item\label{ds} For any $l\in \Delta_u$, $P_l$ is a direct summand of $(t_lT_{\beta_u})_{[\sigma_{u-3}+1,\sigma_{u-1}]}$.
    \item\label{taunz} $\chi_u(k) > 0$ for any $k\in \Delta_u$.
\end{enumerate}

We want to continue the process and construct $\Delta_{p+1}$ in such a way that  the conditions \ref{factor} and \ref{lying} hold for $u=p+1$ and, whenever the condition \ref{taunz} holds for $u=p+1$, then so do the conditions \ref{nds}--\ref{ds}. Define 
$$C_u:=  t_{\Delta_u}^- \dots t_{\Delta_1}^- P_i =  (t_{\Delta_u}^{-1} \dots t_{\Delta_1}^{-1} P_i)[\sigma_u-m+\omega_u-\omega_0].$$

The first crucial fact that we will need is that $C_u$ is a two-term object of a certain form.

\begin{lemma}\label{Cis2t} For any $1\le u\le p$ there exists a triangle of the form
$$C_u[-1]\xrightarrow{\beta_u}\bigoplus_{j \in \Delta_{u-1}} P_j^{\chi_{u-1}(j)}[-\omega_{u-1}] \xrightarrow{\varphi_{u}} \bigoplus_{k \in \Delta_u} P_k^{\chi_u(k)}\xrightarrow{\alpha_u}C_u.$$ \end{lemma}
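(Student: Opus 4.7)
The plan is to induct on $u$, strengthening the inductive hypothesis so that $C_u$ is not merely a two-term object with the stated triangle but in fact a \emph{left-proper} one; this is what will permit iterating Lemma \ref{2toe}(2) at the next step.

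For the base case $u=1$, I would view $P_i$ with $i\in V^0$ as the tautologically left-proper two-term object with empty left side and right side $\{P_i\}$, choosing the internal two-term index so that $V^{u+1}=V^0$. The inclusion $\ls(P_i)=\varnothing\subseteq\Delta_1\subseteq V^1$ then satisfies the hypotheses of Lemma \ref{2toe}(2), yielding at once the triangle
$$C_1[-1]\to P_i[-\omega_0]\to\bigoplus_{j\in\Delta_1}P_j\to C_1,$$
with multiplicities $\chi_0(i)=1$ and $\chi_1(j)=1$ for $j\in\Delta_1\subseteq N(i)$ matching the statement. Left-properness of $C_1$ is then verified by a short case analysis of $\Hom^*(C_1,P_l)$ for $l\in V^0$ via the long exact sequence of this triangle, using the criterion in Lemma \ref{lpeq}. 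For the inductive step, applying condition \ref{lying} at index $u+1$ gives $\Delta_{u-1}\subseteq\Delta_{u+1}$, so $\ls(C_u)\subseteq\Delta_{u-1}\subseteq\Delta_{u+1}$, and $\Delta_{u+1}\subseteq V^{u+1}$ lies on the side matching the left-index of $C_u$ (modulo $2$); Lemma \ref{2toe}(2) therefore applies to $C_u$ with $\Delta=\Delta_{u+1}$ and produces $C_{u+1}=t_{\Delta_{u+1}}^-C_u$ with right multiplicities $x_j'=\sum_{k\in N(j)}\chi_u(k)-\chi_{u-1}(j)=\chi_{u+1}(j)$, which is exactly the claimed triangle.

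The main obstacle is re-establishing left-properness of $C_{u+1}$ in order to close the induction, since Lemma \ref{2toe}(2) natively delivers only right-properness of the output. I would handle this via the autoequivalence isomorphism $\Hom^*(C_{u+1},P_l)\cong \Hom^*(C_u,t_{\Delta_{u+1}}^+P_l)$ for $l$ in the new left side $\Delta_u$, combined with the spherical-twist triangle describing $t_{\Delta_{u+1}}^+P_l$ (exploiting the pairwise commutativity of $t_m$ for $m\in\Delta_{u+1}\subseteq V^{u+1}$) together with the inductive description of $C_u$. The resulting Hom-dimension identities are delicate to check, most of all in the borderline case $\omega=0$ where otherwise distinct pieces of $\Hom^*$ can coincide in degree; the positivity $\chi_u(k)>0$ supplied by hypothesis \ref{taunz} should rule out degenerate cancellations and yield exactly the dimensions predicted by Lemma \ref{lpeq}.
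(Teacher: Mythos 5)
Your proposal follows the same route as the paper: the paper's entire proof of Lemma \ref{Cis2t} is the single sentence ``Since $P_i$ is a left-proper two-term object, the required triangle can be obtained by iterated application of Lemma \ref{2toe}'', and your base case and inductive step (apply Lemma \ref{2toe}(2) to $C_u$ with $\Delta=\Delta_{u+1}$, using $\ls(C_u)=\Delta_{u-1}\subseteq\Delta_{u+1}$ from condition \ref{lying} and the identity $\sum_{k\in N(j)}\chi_u(k)-\chi_{u-1}(j)=\chi_{u+1}(j)$) are exactly the intended unpacking of that sentence. You have moreover correctly put your finger on the one point the paper's one-liner passes over in silence: Lemma \ref{2toe}(2) certifies only \emph{right}-properness of its output, whereas the next iteration needs the input to be \emph{left}-proper; without left-properness the multiplicity formula, and hence the identification of the exponents with the $\chi$'s, can fail.

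The problem is that you then leave precisely this point unproved: ``delicate to check'' and ``should rule out degenerate cancellations'' is a statement of intent, not an argument, and it sits at the only place where the lemma has any content beyond bookkeeping. So as written the proposal is not yet a proof. It can be closed, either by carrying out the dimension count you describe (via $\Hom^*(C_{u+1},P_l)\cong\Hom^*(C_u,t_{\Delta_{u+1}}^+P_l)$, the twist triangle for $t_{\Delta_{u+1}}^+P_l$ and the perfect-pairing axiom; this is where $\omega\neq 1$ enters), or more cheaply as follows: $C_u$ is, up to shift, the image of the indecomposable object $P_i$ under an autoequivalence, hence indecomposable; if some split monomorphism $g\colon P_l\to\bigoplus_{j}P_j^{\chi_{u-1}(j)}$ satisfied $\varphi_{C_u}\,g[-\omega_{u-1}]=0$, then $g$ would lift along $\beta_{C_u}$ and exhibit $P_l[1-\omega_{u-1}]$ as a direct summand of $C_u$, forcing $C_u\cong P_l[1-\omega_{u-1}]$; comparing $\dim\Hom^*(C_u,P_{l'})$ for $l'\in V^u$ with the value $\sum_{j\in N(l')}\chi_{u-1}(j)$ guaranteed by right-properness (Lemma \ref{lpeq}) then forces $\Delta_{u-1}=\{l\}$ with $\chi_{u-1}(l)=1$ and, via \ref{lying} and \ref{taunz}, $\Delta_{u-2}=\varnothing$, i.e.\ $u=1$, where left-properness is checked by hand. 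Either way, some such argument must appear for the induction to close.
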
 
\begin{proof} Since $P_i$ is a left-proper two-term object, the required triangle can be obtained by iterated application of Lemma \ref{2toe}.  \end{proof}

The next proposition elaborates on the properties of the objects $C_u$ that will allow us to find direct summands of $(T_{\beta_u})_{[\sigma_{u-2}+1,\sigma_{u}]}$.

\begin{propos}\label{Cprop} For any $1\le u\le p$, the two-term object $C_u$ satisfies the following two conditions:
\begin{enumerate}
\item $\Hom^r(C_u,T_{\beta_u})=0$ for any $r\le \omega_u+\sigma_u$.
\item For any nontrivial two-term subobject $C'$ of $C_u$, there exists a nonzero morphism $f:C'\rightarrow T_{\beta_u}[r]$ with $\omega_u+\sigma_{u-2}+1\le r\le \omega_u+\sigma_u$ such that $f[\omega]g=0$ for any right socle morphism $g:C''\rightarrow C'[\omega]$.
\end{enumerate}
\end{propos}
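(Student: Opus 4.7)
The plan is to prove both parts by induction on $u$. Part (1) reduces to a single Hom computation via the autoequivalence property of spherical twists, while part (2) is more delicate and makes essential use of the theory of two-term objects developed in the previous section.

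For part (1), I will use the description $C_u=(t_{\Delta_u}^{-1}\cdots t_{\Delta_1}^{-1}P_i)[\sigma_u-m+\omega_u-\omega_0]$ together with the identity $t_\alpha=t_it_{\Delta_1}\cdots t_{\Delta_u}t_{\beta_u}$ (which yields $t_{\Delta_1}\cdots t_{\Delta_u}T_{\beta_u}=t_i^{-1}T_\alpha$) to obtain
\[
\Hom^r(C_u,T_{\beta_u})\cong\Hom^{r-\sigma_u+m-\omega_u+\omega_0}(P_i,t_i^{-1}T_\alpha).
\]
For $r\le\omega_u+\sigma_u$ the shifted degree on the right is at most $m+\omega_0$. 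If this Hom were nonzero, Lemma \ref{tech0}(1) would force $m_\alpha\le m+\omega_0-1$, contradicting the upper bound $m\le m_\alpha-\omega_0$ on $m$ in its defining interval.

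For part (2), I argue by induction on $u$. In the \emph{base case} $u=1$, every nontrivial two-term subobject $C'$ of $C_1$ must omit the $P_i[-\omega_0]$-layer: if it were included, the non-vanishing of each component $\gamma_{i,k}$ of $\varphi_1$ would force the entire $V^1$-layer to be included as well, rendering $C'=C_1$ trivial. Hence $C'=\bigoplus_{k\in S}P_k$ for some nonempty $S\subseteq\Delta_1$. Fix $l\in S$; the factorization data provides a long morphism $h\colon P_l\to t_lT_{\beta_1}[s+\omega]$ for some $s\in[\sigma_{-2}+1,m]$. Applying $t_l^{-1}$ and using $t_l^{-1}P_l=P_l[\omega-1]$ yields a nonzero morphism $\tilde h\colon P_l\to T_{\beta_1}[s+1]$, which we extend by zero to $f\colon C'\to T_{\beta_1}[s+1]$. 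Arithmetic shows that $s+1$ lies in the required range $[\omega_1+\sigma_{-1}+1,\omega_1+\sigma_1]=[m+\omega,m+1]$. The socle-killing property is nontrivial only when $\omega=0$; in this case it reduces to $\tilde h\circ t=0$ for the socle generator $t\in\Hom^\omega(P_l,P_l)$, which follows from the longness of $h$ together with the perfect spherical pairing.

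In the \emph{inductive step}, given a nontrivial two-term subobject $C'\subseteq C_u$, Lemma \ref{presinc} produces a nontrivial two-term subobject $t_{\Delta_u}^+C'\subseteq t_{\Delta_u}^+C_u=C_{u-1}$. The inductive hypothesis supplies a morphism $f'\colon t_{\Delta_u}^+C'\to T_{\beta_{u-1}}[r']$ in the $(u-1)$-range with the socle-killing property. Applying $t_{\Delta_u}^-$ and using $t_{\Delta_u}T_{\beta_u}=T_{\beta_{u-1}}$ produces $f\colon C'\to T_{\beta_u}[r]$ with $r=r'+1-\omega_{u+1}$; a direct arithmetic check using $\omega_{u-1}=\omega_{u+1}\ (\mathrm{mod}\ 2)$ and $\sigma_u=\sigma_{u-1}+1-\omega_u$ verifies that $r$ lies in the prescribed range for $u$. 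The socle-killing property transfers contrapositively via Lemma \ref{pressoc}: any right socle $g\colon C''\to C'[\omega]$ becomes a right socle morphism after applying $t_{\Delta_u}^+$, so if $f[\omega]g\neq 0$ we would derive a contradiction with the inductive hypothesis for $f'$. The main obstacle is verifying the technical hypotheses of Lemmas \ref{presinc} and \ref{pressoc}—in particular, the right-properness of $C_u$ and $C'$ and the containment $\rs(C'')\subseteq\rs(C')$—which must be maintained along the induction. These are handled using Lemma \ref{rightred} to reduce to the right-proper case, Lemma \ref{simpvan} to discard the $P_k$-summands that split off during the reduction, and careful tracking of the alternating properness of the $C_u$ along the construction of Lemma \ref{Cis2t}, exploiting the containment $\Delta_{u-2}\subseteq\Delta_u$ provided by condition (2) of the factorization data.
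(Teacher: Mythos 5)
Part (1) of your argument is correct and amounts to the paper's computation with the induction unrolled: the paper instead reduces $\Hom^r(C_u,T_{\beta_u})$ to $\Hom^{r+\omega_{u+1}-1}(C_{u-1},T_{\beta_{u-1}})$ one step at a time, landing on Lemma~\ref{base} (itself an application of Lemma~\ref{tech0}(1)), but the content is identical. Your inductive step for part (2) in the case where $C'$ is right-proper --- Lemma~\ref{presinc} to pass to $t_{\Delta_u}^+C'\subseteq C_{u-1}$, the degree shift $r\mapsto r+1-\omega_{u+1}$, and the contrapositive transfer of the socle-killing property via Lemmas~\ref{rightred}, \ref{simpvan} and \ref{pressoc} --- is exactly the paper's argument, and your arithmetic checks out.

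The gap is that your induction for part (2) only covers right-proper $C'$. Lemma~\ref{presinc} needs $C'$ to be right-proper in order to produce a nontrivial two-term subobject of $C_{u-1}$, and a nontrivial two-term subobject of $C_u$ can fail to be right-proper at \emph{every} level $u$, not only at $u=1$: whenever some $P_l$ with $l\in\Delta_u$ splits off the top layer of $C'$ (for instance $C'=P_l$ itself), the reduction to $C_{u-1}$ is unavailable. Lemma~\ref{rightred} does not repair this --- it makes the \emph{source} $C''$ of a right socle morphism right-proper, not the subobject $C'$ --- and splitting $C'$ into a right-proper piece plus copies of $P_l$ still leaves you needing a morphism out of $P_l$ when the right-proper piece is trivial. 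The paper handles this by a separate level-$u$ argument: reduce to $C'=P_l$ with $l\in\Delta_u$, use property (5) of the factorization data to obtain a long morphism $P_l\to t_lT_{\beta_u}[a+\omega]$ with $a\in[\sigma_{u-3}+1,\sigma_{u-1}]$ minimal, apply $t_l^{-1}[1-\omega]$, and then verify the socle-killing property. Your base case does the first half of this for $u=1$ only, and even there the socle-killing verification is undersketched: a right socle morphism $g\colon C''\to P_l[\omega]$ need not have $C''=P_l$; after Lemma~\ref{rightred} the essential case is $C''$ right-proper with $\rs(C'')=\{l\}$, where the paper factors $f(t_lg)[-1]$ through $P_l^y[1-\omega]$ and derives a contradiction with the minimal nonzero degree of $t_lT_{\beta_u}$ (culminating in $\omega>0$). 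The sub-case you treat via the perfect pairing is the easy one. Without the non-right-proper branch carried through all levels $u$, the proposition is not established.
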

\begin{proof} We proceed by induction on $u$ with the base case $u=0$. Since $C_0=P_i$ does not have nontrivial two-term subobjects and ${\Hom^r(P_i,T_\beta) = 0}$ for any $r\le m+\omega_0$ by Lemma \ref{base}, there is nothing left to prove in the base case. Now we prove $1)$ and $2)$ for $u$, assuming they are true for $u-1$. The first property of $C_u$ is clear, because 
$$\Hom^r(C_u,T_{\beta_u})=\Hom^r(t_{\Delta_u}^-C_{u-1},t_{\Delta_u}^{-1}T_{\beta_{u-1}})\cong \Hom^{r+\omega_{u+1}-1}(C_{u-1},T_{\beta_{u-1}}).$$

 \bigskip Now we turn to the second property. Suppose that $C'$ is a nontrivial two-term subobject of $C_u$. If $C'$ is right-proper, then $t_{\Delta_u}^+C'$ is a nontrivial two-term subobject of $C_{u-1}=t_{\Delta_u}^+C_u$ by Lemma \ref{presinc}. By the induction hypothesis, there is a nonzero morphism $f \colon t_{\Delta_u}^+C'\rightarrow T_{\beta_{u-1}}[r]$ with $\omega_{u-1}+\sigma_{u-3}+1\le r\le \omega_{u-1}+\sigma_{u-1}$ such that $f[\omega]g=0$ for any right socle morphism $g \colon C''\rightarrow t_{\Delta_u}^+C'[\omega]$.
 Let us prove that the morphism $t_{\Delta_u}^-f \colon C'\rightarrow T_{\beta_{u}}[r+1-\omega_{u-1}]$ satisfies the required properties.

 Suppose for a contradiction that there exists a right socle morphism ${g \colon C''\rightarrow C'[\omega]}$ such that ${(t_{\Delta_u}^-f)[\omega]g\not=0}$. Then applying Lemmas \ref{rightred} and \ref{simpvan} we can find such a $g$ with right-proper $C''$ satisfying the condition $\rs(C'')\subseteq \rs(C')$. Then ${t_{\Delta_u}^+g:t_{\Delta_u}^+C''\rightarrow t_{\Delta_u}^+C'[\omega]}$ is right socle by Lemma \ref{pressoc} and satisfies the condition $f[\omega]\big(t_{\Delta_u}^+g\big)=t_{\Delta_u}^+\left((t_{\Delta_u}^-f)[\omega]g\right)\not=0$ which is impossible. This shows that $t_{\Delta_u}^{-1}f$ is indeed the required morphism.

 It remains to consider the case when $C'$ is not right-proper. In this case we may assume that $C'=P_l$ for some $l\in\Delta_u$.
Let  $a$ be the minimal integer such that $P_l$ is a direct summand of $(t_lT_{\beta_u})_a$. Note that $a\in [\sigma_{u-3}+1,\sigma_{u-1}]$ by the properties \ref{nds}--\ref{ds} of $\Delta_u$.
Picking some long morphism $f \colon P_l\rightarrow t_lT_{\beta_u}[a+\omega]$ and applying $t_l^{-1}[1-\omega]$, we get a nonzero morphism $t_l^{-1}f[1-\omega] \colon P_l\rightarrow T_{\beta_u}[a+1]$. Note that 
$$a+1\in [\sigma_{u-3}+2,\sigma_{u-1}+1]=[\omega_u+\sigma_{u-2}+1,\omega_u+\sigma_u].$$
Thus, it remains to show that ${(t_l^{-1}f)[1]g=0}$ for any right socle morphism ${g \colon C''\rightarrow P_l[\omega]}$. Due to Lemma \ref{rightred}, we may assume that $C''$ has the form
$$
C''=cone\left(\bigoplus\limits_{j\in V^{u+1}}P_j^{x_j}[-\omega_{u+1}]\xrightarrow{\varphi_{C''}} P_l^x\right)
$$
for some integers $x_j,x$.  Note now that if $C''=P_l$, then $t_lg:P_l[1-\omega]\rightarrow P_l[1]$ is a right socle morphism, and hence factors though $P_j[1-\omega_{u+1}]$ for $j\in N(l)$. Then ${f[1](t_lg)=0}$ by the definition of a long morphism. Hence, we may assume that $C''$ is right-proper. Then $t_lg$ is a morphism from $cone\left(P_l^y[1-\omega]\xrightarrow{\varphi_{t_l^+C''}[1-\omega_{u+1}]}\bigoplus\limits_{j\in V^{u+1}}P_j^{x_j}[1-\omega_{u+1}]\right)$ to $P_l[1]$ by Lemma \ref{2toe}. Note that the composition
$$\bigoplus\limits_{j\in V^{u+1}}P_j^{x_j}[-\omega_{u+1}]\xrightarrow{\alpha_{t_l^+C''}[-\omega_{u+1}]} t_lC''[-1]\xrightarrow{t_lg[-1]}P_l\xrightarrow{f}t_lT_{\beta_u}[a+\omega]$$
is zero by the definition of a long morphism. Hence $f(t_lg)[-1]$ factors through some morphism ${\theta:P_l^y[1-\omega]\rightarrow t_lT_{\beta_u}[a+\omega]}$. Since $a+\omega-1<a$, $P_l$ is not a direct summand of $(t_lT_{\beta_u})_{a+\omega-1}$. If ${f[1](t_lg)\not=0}$, then $\theta\not=0$ and there is some $j\in N(l)$
such that $P_j\xrightarrow{\theta[\omega+\omega_u-1]\iota[\omega_u] \gamma_{j,l}} t_lT_{\beta_u}[a+2\omega+\omega_u-1]$ is nonzero, where $\iota:P_l\rightarrow P_l^y$ is some split monomorphism. Note that by property \ref{md} and the choice of $a$, the minimal nonzero degree of $t_lT_{\beta_u}$ is not smaller than $min(a, \sigma_{u-2}+1)$ by  Lemma \ref{tech0}. On the other hand, if ${f[1](t_lg)\not=0}$, then the minimal nonzero degree of $t_lT_{\beta_u}$ does not exceed ${a+\omega+\omega_u-1<a}$. Then we have
$$\sigma_{u-2}+1\le a+\omega+\omega_u-1\le \sigma_{u-1}+\omega+\omega_u-1=\sigma_{u-2}+\omega,$$ i.e. $\omega>0$, a contradiction. This shows that ${f[1](t_lg)=0}$, and hence $(t_l^{-1}f)[1]g=0$, as required.
\end{proof}

Now we are finially ready construct $\Delta_{p+1}\subseteq V_{p+1}$. We will do this in the following way. Set ${\Delta_{p+1}^0=\varnothing}$. Suppose that we have defined the set $\Delta_{p+1}^c$. Choose a pair $l, a$ with ${l\in V_{p+1}\setminus \Delta_{p+1}^c}$, $a \in \Z$ such that $P_l$ is a direct summand of $\left(t_{\Delta_{p+1}^c}^{-1}T_{\beta_p}\right)_a$, and with $a$ as small as possible (among all such pairs). We define $\Delta_{p+1}^{c+1}=\Delta_{p+1}^c\cup\{l\}$ and continue the process if $a\in [\sigma_{p-2}+1,\sigma_p]$. If either such an integer $a$ does not exist or $a>\sigma_p$, then we terminate the process, defining  $\Delta_{p+1}:=\Delta_{p+1}^c$. Now we are ready to prove the factorization theorem. 

\begin{theorem} The $\Delta_{p+1}$ constructed as described above satisfies the following conditions:
\begin{enumerate}
\item[1.] $\alpha = s_{\Delta_0}\dots s_{\Delta_{p+1}} \beta_{p+1}$ for some $\beta_{p+1}\in B_\Gamma^+$ with $l(\beta_{p+1})=l(\alpha)-\sum\limits_{v=0}^{p+1}|\Delta_v|$
\item[2.] $\Delta_{p-2}\subseteq\Delta_p\subseteq N(\Delta_{p-1})$.
\end{enumerate}
Moreover, if $\chi_{p+1}(l)>0$ for any $l\in\Delta_{p+1}$, then the following conditions are satisfied as well:
\begin{enumerate}
\item[3.] $P_l$ is not a direct summand of $(T_{\beta_{p+1}})_{[\sigma_{p-2}+1,\sigma_{p}]}$ for any $l \in V_{p+1}$.
\item[4.] The minimal nonzero degree of $T_{\beta_{p+1}}$ is not smaller than $\sigma_{p-1}+1$.
\item[5.] For any $l\in \Delta_{p+1}$, $P_l$ is a direct summand of $(t_lT_{\beta_{p+1}})_{[\sigma_{p-2}+1,\sigma_{p}]}$.
\end{enumerate}
\end{theorem}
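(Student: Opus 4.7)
The plan is to follow the greedy construction of $\Delta_{p+1}$ step by step, verifying the desired properties inductively and invoking Lemma \ref{princ} for words strictly shorter than $\alpha$ (for which it is already known by the outer inductive framework). Throughout I will use that elements of $V^{p+1}$ (index mod $2$) are pairwise non-adjacent in $\Gamma$, so that for any $\Delta\subseteq V^{p+1}$ the product $s_\Delta\in B_\Gamma^+$ and the composition of twists $t_\Delta$ are unambiguously defined, and that Lemma \ref{tech0} controls how the minimal nonzero degree and direct summands transform under a single twist $t_i^{\pm 1}$.

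For condition $1$, I show inductively on the greedy step $c$ that $\beta_p = s_{\Delta_{p+1}^c}\beta_p^c$ in $B_\Gamma^+$ with $l(\beta_p^c) = l(\beta_p) - |\Delta_{p+1}^c|$, so that $t_{\Delta_{p+1}^c}^{-1}T_{\beta_p}\cong T_{\beta_p^c}$. When the procedure selects a new $l$ with $P_l$ a direct summand of $(T_{\beta_p^c})_a$ at $a\in[\sigma_{p-2}+1,\sigma_p]$, iterated application of Lemma \ref{tech0} (together with hypothesis (4) for $\beta_p$) places $a$ inside the interval $I_{(p+1)\bmod 2,\beta_p^c}=[\min(\beta_p^c),\min(\beta_p^c)-\omega_p]$ demanded by Lemma \ref{princ}. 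Since $l(\beta_p^c) < l(\alpha)$, the lemma already applies to $\beta_p^c$, giving $\beta_p^{c+1}=s_l^{-1}\beta_p^c\in B_\Gamma^+$; taking $\beta_{p+1}$ to be the final $\beta_p^c$ yields the factorization. Condition $2$ asks that $\Delta_{p-1}\subseteq\Delta_{p+1}\subseteq N(\Delta_p)$. The first inclusion is verified by showing that any $l\in\Delta_{p-1}$ must be selected by the greedy step: hypothesis (5) for $u=p-1$ provides a direct summand $P_l$ of $(t_lT_{\beta_{p-1}})$ in $[\sigma_{p-4}+1,\sigma_{p-2}]$, and transporting it through the twists by $\Delta_p$ via Lemma \ref{tech0} yields a direct summand of $P_l$ in some $(t_{\Delta_{p+1}^c}^{-1}T_{\beta_p})_a$ with $a\in[\sigma_{p-2}+1,\sigma_p]$, forcing $l$ into $\Delta_{p+1}$. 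The second inclusion follows by contradiction: if $l\in\Delta_{p+1}\setminus N(\Delta_p)$, then $s_l$ commutes with each $s_k$, $k\in\Delta_p$, and a rewriting of the factorization of $\alpha$ places $s_l$ to the left of $s_{\Delta_p}$, conflicting with hypothesis (3) for $u=p$ via the argument already used in Lemma \ref{base}.

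Conditions $3$ through $5$ require the positivity $\chi_{p+1}(l)>0$ for all $l\in\Delta_{p+1}$, which by Lemma \ref{Cis2t} makes $C_{p+1}$ a genuine two-term object with $P_l^{\chi_{p+1}(l)}$ on its right. Condition $5$ follows directly from the greedy record: at step $c(l)$ the summand $P_l$ appeared in $(t_{\Delta_{p+1}^{c(l)}}^{-1}T_{\beta_p})_{a(l)}$ with $a(l)\in[\sigma_{p-2}+1,\sigma_p]$, and the remaining twists by $\Delta_{p+1}\setminus(\{l\}\cup\Delta_{p+1}^{c(l)})\subseteq V^{p+1}$ commute with $t_l$ and preserve the summand position by Lemma \ref{tech0}. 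Condition $4$ is obtained from iterated Lemma \ref{tech0}: termination forces any remaining direct summand of $T_{\beta_{p+1}}$ to sit strictly above $\sigma_p$, which combined with the shift formulas produces $\min(T_{\beta_{p+1}})\geq\sigma_{p-1}+1$. For condition $3$, when $l\in V^{p+1}\setminus\Delta_{p+1}$ the greedy termination directly prohibits summands in $[\sigma_{p-2}+1,\sigma_p]$; when $l\in\Delta_{p+1}$, a long morphism $P_l\to T_{\beta_{p+1}}[r+\omega]$ in the range would extend along the split inclusion $P_l\hookrightarrow \bigoplus P_k^{\chi_{p+1}(k)}$ and then to a nonzero morphism $C_{p+1}\to T_{\beta_{p+1}}[r']$ with $r'\leq\omega_{p+1}+\sigma_{p+1}$, contradicting part $(1)$ of Proposition \ref{Cprop}.

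The main obstacle is the delicate range bookkeeping throughout: matching the greedy interval $[\sigma_{p-2}+1,\sigma_p]$ to the Lemma \ref{princ} interval $I_{(p+1)\bmod 2,\beta_p^c}$ at every single step, and keeping $\min(T_{\beta_p^c})$ under control under the cascade of twists by subsets of $\Delta_{p+1}$. The positivity assumption on $\chi_{p+1}$ is indispensable because it is precisely what turns $C_{p+1}$ into a nontrivial two-term object, allowing Proposition \ref{Cprop} to rule out rogue direct summands in condition $3$; the complementary case (some $\chi_{p+1}(l)\leq 0$) is exactly where the factorization process is meant to stop with $y\in\Delta_{p-1}$ and $\chi_{p+1}(y)=0$, handing the problem over to the combinatorial braiding step of the overall proof.
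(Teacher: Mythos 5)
Your overall strategy coincides with the paper's: prove condition 1 by iterating Lemma \ref{princ} on the shorter words $s_{\Delta_{p+1}^c}^{-1}\beta_p$ after checking that the relevant degree $a$ lands in the interval $I_{u,\cdot}$, get the inclusion $\Delta_{p+1}\subseteq N(\Delta_p)$ by commuting $s_l$ to the far left and contradicting the choice of $j$, and derive conditions 3--5 from the greedy record, Lemma \ref{tech0} and the vanishing $\Hom^r(C_{p+1},T_{\beta_{p+1}})=0$. Those parts are essentially the paper's argument and are fine.

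There is, however, a genuine gap in your proof of the inclusion $\Delta_{p-1}\subseteq\Delta_{p+1}$. You claim that hypothesis (5) for $u=p-1$ gives $P_l$ as a direct summand of $(t_lT_{\beta_{p-1}})_{[\sigma_{p-4}+1,\sigma_{p-2}]}$ and that this can be ``transported through the twists by $\Delta_p$ via Lemma \ref{tech0}'' to a direct summand of $(t_{\Delta_{p+1}^c}^{-1}T_{\beta_p})_a$ with $a\in[\sigma_{p-2}+1,\sigma_p]$. Lemma \ref{tech0}(2) cannot do this: it concerns a \emph{single} twist $t_i^{-1}$ with $i\neq k$, it \emph{preserves} the degree $r$ rather than shifting it by $2-\omega$ as your interval change requires, and it says nothing about twists $t_k$ with $k\in\Delta_p\cap N(l)$, which do not commute with $t_l$. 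Concretely, a long morphism $P_l\to t_lt_{\Delta_p}T_{\beta_p}[\,\cdot\,]$ corresponds under $t_{\Delta_p}^{-1}t_l^{-1}$ to a morphism out of $t_{\Delta_p}^{-1}P_l[\omega-1]$, which is a (shifted) two-term object rather than $P_l$, so ``being a direct summand'' does not transport naively. This inclusion is exactly the point of the paper's two-term-object machinery: assuming $l\in\Delta_{p-1}\setminus\Delta_{p+1}$, one builds via the octahedral axiom a nontrivial two-term subobject $C'$ of $C_p$ complementary to $P_l^{\chi_{p-1}(l)}$, invokes Proposition \ref{Cprop}(2) to obtain a nonzero $f\colon C'\to T_{\beta_p}[r]$ killing all right socle morphisms, and then checks that $t_{\Delta_{p+1}}^{-1}(f\psi')[\omega_{p+1}]$ is a long morphism $P_l\to t_{\Delta_{p+1}}^{-1}T_{\beta_p}[r+\omega_{p+1}]$ in the forbidden range --- which is where the right-socle condition and Lemmas \ref{presinc}--\ref{pressoc} are actually consumed. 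Your proposal uses Proposition \ref{Cprop} only for condition 3, where the easy part (1) suffices; the hard part (2) is indispensable precisely for $\Delta_{p-1}\subseteq\Delta_{p+1}$, and your sketch does not supply a substitute for it.
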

\begin{proof} The first condition can be obtained applying Lemma \ref{princ} to the words $\beta_p$, $s_{\Delta_{p+1}^1}^{-1}\beta_p,\dots,s_{\Delta_{p+1}}^{-1}\beta_p$, all of which are of strictly smaller length than $\alpha$. Indeed, if $\Delta_{p+1}^{c+1}=\Delta_{p+1}^c\cup\{l\}$, then $P_l$ is a direct summand of $\left(t_{\Delta_{p+1}^c}^{-1}T_{\beta_p}\right)_a$ for $a\le \sigma_p$ and it is sufficient to prove that the minimal nonzero degree of $t_{\Delta_{p+1}^c}^{-1}T_{\beta_p}$ is not smaller than $a+\omega_p\le \sigma_{p-1}+1$. If this is not the case, there exists some $b\le min(a-1,\sigma_{p-1})$ and $k\in \Gamma_0$ such that $P_k$ is a direct summand of $(t_{\Delta_{p+1}^c}^{-1}T_{\beta_p})_b$. Observe that $k$ cannot belong to $V_p$ by the conditions \ref{nds}, \ref{md} with $u=p$ and  Lemma \ref{tech0}. $k$ also cannot belong to $V^{u+1}\setminus\Delta_{p+1}^c$ by the choice of $a$. On the other hand, for $k\in \Delta_{p+1}^c$, one has
$$\Hom^{b+\omega}(P_k,t_{\Delta_{p+1}^c}^{-1}T_{\beta_p})\cong \Hom^{b+\omega}(t_{\Delta_{p+1}^c}P_k,T_{\beta_p})\\
\cong\Hom^{b+2\omega-1}(P_k,T_{\beta_p})=0,$$
because the minimal nonzero degree of $T_{\beta_p}$ is not smaller than $\sigma_{p-2}+1$ and ${b+\omega-1\le \sigma_{p-1}+\omega-1\le \sigma_{p-2}}$.

 Let us now prove the second condition. Suppose first that there exists some ${l\in\Delta_{p+1}\setminus N(\Delta_p)}$. Then it is clear that $l\not\in N(\Delta_u)$ for all $0\le u\le p$, in particular, $l \neq i$. Hence we have $\alpha=s_{\Delta_0}\dots s_{\Delta_{p}} s_ls_l^{-1}\beta_p=s_l\gamma$ for some $\gamma\in B^+_{\Gamma}$ with $l(\gamma)=l(\alpha)-1$. Then the claim of Lemma \ref{princ} is valid for $\alpha$, a contradiction. 

 Suppose now that $l\not\in\Delta_{p+1}$ for some $l\in\Delta_{p-1}$. By construction, this means that $P_l$ is not a direct summand of $\left(t_{\Delta_{p+1}}^{-1}T_{\beta_p}\right)_{[\sigma_{p-2}+1,\sigma_p]}$. Let $\pi$ denote the split epimorphism $\bigoplus_{j \in \Delta_{p-1}} P_j^{\chi_{p-1}(j)}\twoheadrightarrow P_l^{\chi_{p-1}(l)}$. The octahedral axiom applied to the composition ${\pi[1-\omega_{p+1}]\circ \beta_{C_p}[1]}$ gives us the following diagram:
$$ {\small \xymatrix{ \bigoplus_{k \in \Delta_p} P_k^{\chi_p(k)} \ar[r] \ar@{=}[d]&
C'\ar[r]\ar[d] & \bigoplus_{j \in \Delta_{p-1}\setminus\{l\}} P_j^{\chi_{p-1}(j)}[1-\omega_{p+1}] \ar[d]\\
\bigoplus_{k \in \Delta_p} P_k^{\chi_p(k)} \ar[r] & C_p \ar[r]\ar[d]& \bigoplus_{j \in \Delta_{p-1}} P_j^{\chi_{p-1}(j)}[1-\omega_{p+1}] \ar[d]^{\pi[1-\omega_{p+1}]}\\ 
& P_l^{\chi_{p-1}(l)}[1-\omega_{p+1}]\ar@{=}[r] & P_l^{\chi_{p-1}(l)}[1-\omega_{p+1}] }
}$$
Let $\psi$ denote the morphism of $P_l^{\chi_{p-1}(l)}[-\omega_{p+1}]$ to $C'$ arising from this diagram.
Since $\chi_{p-1}(l)>0$ by our assumptions, $C'$ is a nontrivial two-term subobject of $C_p$ and Proposition \ref{Cprop} can be applied. For some $\omega_p+\sigma_{p-2}+1\le r\le \omega_p+\sigma_p$, we have a nonzero morphism $f:C'\rightarrow T_{\beta_p}[r]$ such that $f[\omega]g=0$ for any right socle morphism $g \colon C''\rightarrow C'[\omega]$. Since $\Hom^r(C_p,T_{\beta_p})=0$ by Proposition \ref{Cprop}, the morphism $f\psi'$ is nonzero for some component $\psi':P_l[-\omega_{p+1}]\rightarrow C'$ of the map $\psi$. We are going to prove that $t_{\Delta_{p+1}}^{-1}(f\psi')[\omega_{p+1}] \colon P_l\rightarrow t_{\Delta_{p+1}}^{-1}T_{\beta_p}[r+\omega_{p+1}]$ is long. Since ${r-\omega_p \in [\sigma_{p-2}+1,\sigma_p]}$, this contradicts the assumption that $P_l$ is not a direct summand of $\left(t_{\Delta_{p+1}}^{-1}T_{\beta_p}\right)_{[\sigma_{p-2}+1,\sigma_p]}$.


 Pick some $r\in N(l)$. Since $P_r[1-\omega_p]$ is a right-proper two-term object with $\alpha_{P_r[1-\omega_p]}=\varphi_{P_r[1-\omega_p]}=0$ and $\beta_{P_r[1-\omega_p]}=id_{P_r[\omega_p]}$, we have
$$t_{\Delta_{p+1}}^+P_r[1-\omega_p]\cong cone\left(\bigoplus\limits_{j\in N(r)\cap \Delta_{p+1}}P_j[-\omega_{p+1}]\rightarrow P_r\right)$$
by Lemma \ref{2toe}.
Note that the morphism $\psi'[\omega](t_{\Delta_{p+1}}^+\gamma_{r,l}[1-\omega_p])$ is right socle. Indeed, as the diagram above shows, $\psi'$ factors through $\bigoplus_{k \in \Delta_p} P_k^{\chi_p(k)}$ and the map
$$P_r\xrightarrow{\alpha_{t_{\Delta_{p+1}}^+P_r[1-\omega_p]}}t_{\Delta_{p+1}}^+P_r[1-\omega_p]\xrightarrow{t_{\Delta_{p+1}}^+\gamma_{r,l}[1-\omega_p]} P_l[\omega_p]\xrightarrow{}\bigoplus_{k \in \Delta_p} P_k^{\chi_p(k)}[\omega]$$
is not a split monomorphism, because it factors through $P_l[\omega_p]$. Then $(f\psi')[\omega](t_{\Delta_{p+1}}^+\gamma_{r,l}[1-\omega_p])=0$, and hence $t_{\Delta_{p+1}}^{-1}(f\psi')[\omega]\gamma_{r,l}=0$. Thus, $t_{\Delta_{p+1}}^{-1}(f\psi')[\omega_{p+1}]$ is a long morphism.

Suppose now that $\chi_{p+1}(l)>0$ for all $l\in\Delta_{p+1}$. We define $C_{p+1}:=t_{\Delta_{p+1}}^-C_p$. Then we have $\Hom^r(C_{p+1},T_{\beta_{p+1}})=0$ for $r\le \omega_{p+1}+\sigma_{p+1}$ (see the proof of the first part of Proposition \ref{Cprop}). Due to Lemma \ref{Cis2t}, we have $$C_{p+1}=cone\left(\bigoplus_{j \in \Delta_{p}} P_j^{\chi_{p}(j)}[-\omega_{p}] \xrightarrow{\varphi_{p+1}} \bigoplus_{k \in \Delta_{p+1}} P_k^{\chi_{p+1}(k)}\right)$$ for some morphism $\varphi_{p+1}$. We prove the third property by contradiction. If $P_l$ with $l\in\Delta_{p+1}$ is a direct summand of $(T_{\beta_{p+1}})_a$ for some $a\le \sigma_p$, then there exists a morphism of $\bigoplus_{k \in \Delta_{p+1}} P_k^{\chi_{p+1}(k)}$ to $T_{\beta_{p+1}}[a+\omega]$ annihilated by $\varphi_{p+1}$. This gives a nonzero morphism of $C_{p+1}$ to $T_{\beta_{p+1}}[a+\omega]$, which is impossible since $a+\omega\le \omega+\sigma_p<\omega_{p+1}+\sigma_{p+1}$. Observe that the minimal nonzero degree of $T_{\beta_{p+1}}$ is not smaller than $\sigma_{p-2}+1$ and $P_l$ with $l\in V_{p+1}\setminus \Delta_{p+1}$ cannot be a direct summand of $(T_{\beta_{p+1}})_{[\sigma_{p-2}+1,\sigma_p]}$ by the construction of $\Delta_{p+1}$. This finishes the proof of the third property. The forth property follows from the third one, the property \ref{nds} with $u=p$ and Lemma \ref{tech0}. 

 It remains to prove the fifth property. Let  ${l\in\Delta_{p+1}}$. There is some $c$ such that $l\not\in\Delta_{p+1}^{c}$ and $l\in\Delta_{p+1}^{c+1}$. Then $P_l$ is a direct summand of $\left(t_{\Delta_{p+1}^c}^{-1}T_{\beta_{p}}\right)_a$ for some $a\in [\sigma_{p-2}+1,\sigma_{p}]$. Choose the minimal such a number $a$. It follows from the construction of $\Delta_{p+1}$, the property \ref{nds} with $u=p$ and  Lemma \ref{tech0} that the minimal nonzero degree of $t_{\Delta_{p+1}^c}^{-1}T_{\beta_{p}}$ is not smaller than $min(a, \sigma_{p-1}+1)$ (see the beginning of this proof). Then $P_l$ is a direct summand of $\left(t_{\Delta_{p+1}\setminus\Delta_{p+1}^{c+1}}^{-1}t_{\Delta_{p+1}^c}^{-1}T_{\beta_{p}}\right)_a=(t_lT_{\beta_{p+1}})_a$ by Lemma \ref{tech0} and we are done.
\end{proof}

If $\chi_{p+1}(l) = 0$ for some $l \in \Delta_{p+1}$, we can write $\alpha = s_{\Delta_0} \cdots s_{\Delta_p} s_l \widetilde{\beta}$ and get the required presentation of $\alpha$, as we have announced in the previous section. In this case we terminate the process of factorization (step I) and go to braiding (step II), which is described in the next section. Otherwise \ref{taunz} is satisfied for $u = p+1$ as well. Then we write $\alpha = s_{\Delta_0} s_{\Delta_1}\cdots s_{\Delta_p} s_{\Delta_{p+1}} \beta_{p+1}$ and continue the process, applying the arguments above to $u=p+2$ instead of $u=p+1$.

\section{Mesh braiding in $B^+_\Gamma$}\label{br} 

The goal of this section is to show that any word of the form $s_{\Delta_0} \cdots s_{\Delta_p} s_l$ as obtained in the previous section is left-divisible by some $s_j$ in $B_\Gamma^+$ with $j\neq i$. As we explained in Section 5, this finishes the proof of Lemma \ref{princ}. 

\bigskip 
Let $\Z\Gamma$ be a directed graph whose vertices are pairs $(n,j)$ for every $n \in \Z$, $j \in V^n$. The arrows of $\Z\Gamma$ are of the form $((n,j) \to (n+1,i))$ for every edge $(i,j)$ of $\Gamma$. The graph $\Z\Gamma$ is essentially the stable translation quiver associated to $\Gamma$, but with vertices enumerated differently. It is convenient to draw $\Z\Gamma$ in such a way that vertices with the same first coordinate form one `vertical slice'. Set $p_1(a) = n$, $p_2(a) = j$ for $a=(n,j) \in (\Z\Gamma)_0$. 
Let $\Theta_n$ denote the set $p_1^{-1}(n) = \{(n,j) \in (\Z\Gamma)_0 \mid j\in V^n\} \subset (\Z\Gamma)_0$ (the $n$'th vertical slice).  


\bigskip

Now any word $\gamma$ representing an element of $ B_\Gamma^+$ can be depicted as some set of vertices $\Lambda_\gamma$ of $\Z\Gamma$ in the following way. Choose a presentation $\gamma = s_{\Sigma_0} \dots s_{\Sigma_p}$, where $\Sigma_0, \dots, \Sigma_p $ are such that $\Sigma_k \subseteq V^k$. Here we use the convention $s_{\varnothing} = 1$. Now we can define ${\Lambda_\gamma = \{(k, j) \mid k \geq 0, j \in \Sigma_k \} \subset (\Z\Gamma)_0}$. On the other hand, to any finite set $\Lambda \subset (\Z\Gamma)_0$ one can assign a word $\gamma_{\Lambda}$ in $B_\Gamma^+$. More precisely, set  ${\gamma_{\Lambda} = \prod\limits_{k = -\infty}^{+\infty} s_{\Sigma_k}}$, where $\Sigma_k = \Lambda \cap \Theta_k$.  It is easy to see that $\gamma_{\Lambda_\gamma}=\gamma$.

\bigskip 

  {\bf Example.}\label{ex1} Let $\Gamma = D_4$ with vertices enumerated in such a way that the vertex of degree three is labeled $2$. Let $\gamma = s_2(s_1s_3s_4)s_2(s_1s_3s_4)s_2s_4$. Then we have the following set $\Lambda_{\gamma}$:

\begin{figure}[H]
\centering

\begin{tikzpicture}

\draw (-3,1.5) -- (-2,0.5) -- (-1,1.5) -- (0,0.5) --(1,1.5);
\draw  (-3,0.5) --  (1,0.5);
\draw (-3,-0.5) --  (-2,0.5) --  (-1,-0.5) --(0,0.5); 
\draw  (-4,0.5) --(-3,1.5); 
\draw  (-4,0.5) -- (-3,0.5); 
\draw  (-4,0.5) -- (-3,-0.5); 
\draw (0,0.5) --  (1,-0.5);

\draw [fill=white,white]  (-3,-0.5) circle [radius=0.2];
\draw [fill=white,white]  (-3,0.5) circle [radius=0.2];
\draw [fill=white,white]  (-3,1.5) circle [radius=0.2];
\draw [fill=white,white]  (-2,0.5) circle [radius=0.2];
\draw [fill=white,white]  (-1,-0.5) circle [radius=0.2];
\draw [fill=white,white]  (-1,0.5) circle [radius=0.2];
\draw [fill=white,white]  (-1,1.5) circle [radius=0.2];
\draw [fill=white,white]  (0,0.5) circle [radius=0.2];
\draw [fill=white,white]  (0,0.5) circle [radius=0.2];
\draw [fill=white,white] (1,-0.5) circle [radius=0.2];
\draw [fill=white,white]   (1,0.5)  circle [radius=0.2];
\draw [fill=white,white]   (1,1.5)  circle [radius=0.2];
\draw [fill=white,white]  (-4,0.5)  circle [radius=0.2];
\draw [fill=red,red]  (-3,0.5) circle [radius=0.05];
\draw [fill=red,red]   (-3,1.5) circle [radius=0.05];
\draw [fill=red,red]  (-1,-0.5) circle [radius=0.05];
\draw [fill=red,red]   (-1,0.5) circle [radius=0.05];
\draw [fill=red,red]  (-1,1.5) circle [radius=0.05];
\draw [fill=red,red]  (0,0.5) circle [radius=0.05];
\draw [fill=red,red]   (0,0.5) circle [radius=0.05];
\draw [fill=red,red]   (-4,0.5) circle [radius=0.05];
\draw [fill=red,red]   (-3,-0.5) circle [radius=0.05];
\draw [fill=red,red]   (-2,0.5)  circle [radius=0.05];
\draw [fill=red,red]  (1,1.5)  circle [radius=0.05];
\draw [fill=black,black]   (1,0.5)  circle [radius=0.05];
\draw [fill=black,black]  (1,-0.5) circle [radius=0.05];

\node at (-3,-0.5) {};
\node at (-3,0.5) {};
\node at (-3,1.5) {};
\node at (-2,0.5) {};

\node at (-1,-0.5) {};
\node at (-1,0.5) {};
\node at (-1,1.5) {};
\node at (0,0.5) {};
\node at (1,-0.5) {};
\node at (1,0.5) {};
\node at (1,1.5) {};
\node at (-6.5,-1.5) {\footnotesize $p_1$};
\node [slice] at (-3,-1.5) {\footnotesize 1};
\node [slice] at (-2,-1.5) {\footnotesize 2};
\node [slice] at (-1,-1.5) {\footnotesize 3};
\node [slice] at (0,-1.5) {\footnotesize 4};
\node [slice] at (1,-1.5) {\footnotesize 5};
\node [slice] at (-4,-1.5) {\footnotesize 0};

\node at (-4,0.5) {};
\end{tikzpicture}

\caption{}

\end{figure}

On the other hand, observe that there are infinitely many sets $\Lambda$ such that $\gamma_{\Lambda} =  \gamma$. For instance, the following set of vertices of $(\Z\Gamma)_0$ also corresponds to the word $\gamma$:   

\begin{figure}[H]
\centering
\begin{tikzpicture}

\draw (-3,1.5) -- (-2,0.5) -- (-1,1.5) -- (0,0.5) --(1,1.5);
\draw  (-3,0.5) --  (1,0.5);
\draw (-3,-0.5) --  (-2,0.5) --  (-1,-0.5) --(0,0.5); 
\draw  (-4,0.5) --(-3,1.5); 
\draw  (-4,0.5) -- (-3,0.5); 
\draw  (-4,0.5) -- (-3,-0.5); 
\draw (0,0.5) --  (1,-0.5);
\draw (3,1.5) -- (2,0.5) -- (3,0.5);
\draw (3,-0.5)  -- (2, 0.5);
\draw (1,1.5) --  (2,0.5) -- (1,0.5);
\draw (1,-0.5) -- (2,0.5);
\draw [fill=white,white]  (2,0.5) circle [radius=0.2];
\draw [fill=white,white]  (3,1.5)  circle [radius=0.2];
\draw [fill=white,white]  (3,0.5)  circle [radius=0.2];
\draw [fill=white,white]  (3,-0.5) circle [radius=0.2];
\draw [fill=white,white]  (-3,-0.5) circle [radius=0.2];
\draw [fill=white,white]  (-3,0.5) circle [radius=0.2];
\draw [fill=white,white]  (-3,1.5) circle [radius=0.2];
\draw [fill=white,white]  (-2,0.5) circle [radius=0.2];
\draw [fill=white,white]  (-1,-0.5) circle [radius=0.2];
\draw [fill=white,white]  (-1,0.5) circle [radius=0.2];
\draw [fill=white,white]  (-1,1.5) circle [radius=0.2];
\draw [fill=white,white]  (0,0.5) circle [radius=0.2];
\draw [fill=white,white]  (0,0.5) circle [radius=0.2];
\draw [fill=white,white] (1,-0.5) circle [radius=0.2];
\draw [fill=white,white]   (1,0.5)  circle [radius=0.2];
\draw [fill=white,white]   (1,1.5)  circle [radius=0.2];
\draw [fill=white,white]  (-4,0.5)  circle [radius=0.2];
\draw [fill=red,red]  (-3,0.5) circle [radius=0.05];
\draw [fill=red,red]   (-3,1.5) circle [radius=0.05];
\draw [fill=red,red]  (-1,-0.5) circle [radius=0.05];
\draw [fill=black,black]   (-1,0.5) circle [radius=0.05];
\draw  [fill=black,black]   (-1,1.5) circle [radius=0.05];
\draw [fill=black,black]   (0,0.5) circle [radius=0.05];
\draw [fill=red,red]   (-4,0.5) circle [radius=0.05];
\draw [fill=red,red]   (-3,-0.5) circle [radius=0.05];
\draw [fill=red,red]   (-2,0.5)  circle [radius=0.05];
\draw [fill=red,red]  (1,1.5)  circle [radius=0.05];
\draw [fill=red,red]  (2,0.5)  circle [radius=0.05];
\draw [fill=red,red]  (3,1.5)   circle [radius=0.05];
\draw [fill=red,red]  (1,0.5)  circle [radius=0.05];
\draw [fill=black,black]  (1,-0.5) circle [radius=0.05];
\draw [fill=black,black]  (3,0.5)  circle [radius=0.05];
\draw [fill=black,black]  (3,-0.5) circle [radius=0.05];

\node at (-3,-0.5) {};
\node at (-3,0.5) {};
\node at (-3,1.5) {};
\node at (-2,0.5) {};

\node at (-1,-0.5) {};
\node at (-1,0.5) {};
\node at (-1,1.5) {};
\node at (0,0.5) {};
\node at (1,-0.5) {};
\node at (1,0.5) {};
\node at (1,1.5) {};
\node at (-6.5,-1.5) {\footnotesize $p_1$};
\node [slice] at (-3,-1.5) {\footnotesize 1};
\node [slice] at (-2,-1.5) {\footnotesize 2};
\node [slice] at (-1,-1.5) {\footnotesize 3};
\node [slice] at (0,-1.5) {\footnotesize 4};
\node [slice] at (1,-1.5) {\footnotesize 5};
\node [slice] at (-4,-1.5) {\footnotesize 0};
\node [slice] at (2,-1.5) {\footnotesize 6};
\node [slice]  at (3,-1.5) {\footnotesize 7};

\node at (-4,0.5) {};
\node at (2,0.5) {};
\node at (3,1.5) {};
\node at (3,0.5) {};
\node at (3,-0.5) {};

\end{tikzpicture}
\caption{}
\end{figure}

Now fix some finite $\Lambda \subset (\Z\Gamma)_0$. 

\begin{definition}
Let $a,b \in \Lambda$. We say that there is a {\it generalized mesh}  starting at $a$ and ending at $b$ if $p_2(a)=p_2(b)=t$, $p_1(a)<p_1(b)$ and $(k,t) \notin \Lambda$ for every $k$ with $p_1(a) < k < p_1(b)$. 
In this case we set
$${mesh_{\Lambda}(a,b) = \{c\in \Lambda \mid p_2(c) \in N(t), p_1(a) < p_1(c) < p_1(b) \}}.$$ 
We will sometimes refer to $mesh_{\Lambda}(a,b)$ as a generalized mesh as well. 

\bigskip 
For every $b \in \Lambda$, let $\tau_{{\Lambda}}(b)$ denote the element of $\Lambda$ such that there is a generalized mesh starting at $\tau_{{\Lambda}}(b)$ and ending at $b$. If there is no such an element in $\Lambda$, we say that $\tau_{{\Lambda}}(b)$ is an `imaginary' vertex $(-\infty,p_2(b))$. In this case we also say that there is an {\it infinite mesh} starting at $\tau_{{\Lambda}}(b)$ and ending at $b$ and set ${mesh_{\Lambda}(\tau_{{\Lambda}}(b),b) = \{c\in\Lambda \mid p_2(c) \in N(p_2(b)), p_1(c) < p_1(b) \}}$.

\end{definition}

Now suppose there is a non-negative integer assigned to each element of $\Lambda$ and each of the imaginary vertices at minus infinity, i.e. consider $\Lambda$ together with a function ${\theta \colon \Lambda \sqcup \big(\{-\infty\}\times \Gamma_0\big) \to \Z}$.

\begin{definition} We say that $(\Lambda, \theta)$ {\it satisfies mesh relations} if 
$$\theta(b) + \theta\tau_{\Lambda}({b}) = \sum\limits_{c \in mesh_{\Lambda}(\tau_{\Lambda}({b}),b)} \theta(c)$$ 
for every $b \in \Lambda$ \end{definition} 

The following statement follows immediately from the definitions: 

\begin{lemma}\label{in_func} Let $\gamma$ be a word of the form $s_{\Delta_0}\dots s_{\Delta_p} s_l$ as described in the previous section. For every $(n,j) \in \Lambda_\gamma$ let $\theta(n,j) = \chi_n(j)$, $\theta(-\infty,i) = -1$, $\theta(-\infty,j) = 0$ for every $j \neq i$. Then $(\Lambda_\gamma, \theta)$ satisfies mesh relations. \end{lemma}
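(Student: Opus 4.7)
The plan is to verify the mesh identity at every vertex $b \in \Lambda_\gamma$ by a direct case analysis, using the defining recursion of $\chi$ as the main tool. The key structural input is the inclusion $\Delta_{u-2} \subseteq \Delta_u$ from property (2) of the factorization: iterated, it forces that for every $j \in V^u$ the set $\{k \le u : j \in \Delta_k\}$ is either empty or a consecutive progression (of step $2$) ending at $u$. Hence, for any interior vertex $b = (u, j) \in \Lambda_\gamma$ with $0 \le u \le p$ there are exactly two possibilities: $\tau_{\Lambda_\gamma}(b) = (u-2, j)$ if $j \in \Delta_{u-2}$, or $\tau_{\Lambda_\gamma}(b) = (-\infty, j)$ if $j \notin \Delta_{u-2}$.

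In the first case the mesh reduces to $\{(u-1, t) : t \in N(j) \cap \Delta_{u-1}\}$, and the identity $\chi_u(j) + \chi_{u-2}(j) = \sum_{t \in N(j) \cap \Delta_{u-1}} \chi_{u-1}(t)$ is literally the defining recursion for $\chi_u(j)$ (using the convention $\chi_{u-1}(t) = 0$ for $t \notin \Delta_{u-1}$). In the second case the mesh stretches over all earlier columns of parity opposite to $u$; here I would unfold the mesh sum column by column via the recursion, combining this with the iterated vanishing $j \notin \Delta_{u-2m}$ for every $m \ge 1$ and with the categorical interpretation of the multiplicities $\chi_u(k)$ as given by the two-term object $C_u$ from Lemma \ref{Cis2t}. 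Together these make all contributions from columns $k < u-1$ cancel, leaving exactly $\chi_u(j)$, which matches the left-hand side $\chi_u(j) + 0$.

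The two remaining vertices are boundary/terminal cases. For $b = (0, i)$ the mesh is empty and the identity $\chi_0(i) + \theta(-\infty, i) = 1 - 1 = 0$ holds by the chosen boundary value $\theta(-\infty, i) = -1$. For the terminal vertex $b = (p+1, l)$ one has $l \in \Delta_{p-1}$ and $l \notin \Delta_p$ by parity, so $\tau_{\Lambda_\gamma}((p+1, l)) = (p-1, l)$ and the mesh is $\{(p, t) : t \in N(l) \cap \Delta_p\}$; the required identity $\chi_{p+1}(l) + \chi_{p-1}(l) = \sum_{t \in N(l) \cap \Delta_p} \chi_p(t)$ is precisely the termination condition $\chi_{p+1}(l) = 0$ rewritten through the recursion.

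The delicate step is the infinite-mesh subcase, where purely formal manipulation of the $\chi$-recursion is insufficient: one must exploit the very specific form of the sets $\Delta_u$ produced by the factorization in Section \ref{2tpt} (rather than treat them as abstract combinatorial data satisfying only properties (1)--(6)). It is precisely the link $\chi_u(k) = $ (multiplicity of $P_k$ in the right part of $C_u$) which prevents "phantom" contributions from lower columns and makes the full infinite mesh sum collapse to $\chi_u(j)$.
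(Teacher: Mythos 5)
Your case division and three of your four cases are right, and they coincide with what the paper intends: the finite mesh ending at $(u,j)$ with $j\in\Delta_{u-2}$ is literally the recursion $\chi_u(j)=\sum_{t\in N(j)}\chi_{u-1}(t)-\chi_{u-2}(j)$ together with the convention $\chi_{u-1}(t)=0$ for $t\notin\Delta_{u-1}$; the vertex $(0,i)$ gives $1+(-1)=0$ over an empty mesh; and the terminal vertex $(p+1,l)$, with $\tau_{\Lambda_\gamma}((p+1,l))=(p-1,l)$, encodes exactly the termination condition $\chi_{p+1}(l)=0$. Note that the paper offers no more than this: Lemma \ref{in_func} is asserted to follow ``immediately from the definitions'', with no written proof.

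The genuine gap is your infinite-mesh case, which you flag as delicate but do not actually close. For $b=(u,j)$ with $u\ge 2$ and $j\in\Delta_u\setminus\Delta_{u-2}$, the mesh relation reads $\chi_u(j)=\sum_{v<u}\sum_{t\in N(j)\cap\Delta_v}\chi_v(t)$, while the recursion (with $\chi_{u-2}(j)=0$) gives $\chi_u(j)=\sum_{t\in N(j)\cap\Delta_{u-1}}\chi_{u-1}(t)$; since $\chi_v>0$ on $\Delta_v$ for $v\le p$, the relation is \emph{equivalent} to the support statement $N(j)\cap\Delta_{u-3}=\varnothing$, i.e.\ every neighbour of a vertex first appearing in $\Delta_u$ itself first appears in $\Delta_{u-1}$. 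There is no ``column-by-column cancellation'' to be had: either that set is empty or the relation fails. This support statement is not a formal consequence of properties (1)--(6): for $\Gamma=D_4$ with $i$ the central vertex, $\Delta_0=\{2\}$, $\Delta_1=\{1,3\}$, $\Delta_2=\{2\}$, $\Delta_3=\{1,3,4\}$ satisfies the inclusions of (2) and positivity through level $2$, yet $4\in\Delta_3\setminus\Delta_1$ has its unique neighbour in $\Delta_0$ and the would-be relation at $(3,4)$ reads $1=\chi_0(2)+\chi_2(2)=2$; this configuration is only excluded because $\chi_3(1)=\chi_3(3)=0$ forces termination before level $3$ enters $\Lambda_\gamma$ in full. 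Your appeal to ``the categorical interpretation of $\chi_u(k)$ via $C_u$'' does not supply the missing step either: Lemma \ref{Cis2t} only records multiplicities over $\Delta_{u-1}\cup\Delta_u$ and says nothing about why $j$ cannot have a neighbour in $\Delta_{u-3}$. To complete the proof one must show, from the actual construction of the $\Delta$'s in Section \ref{2tpt} together with the positivity/termination interplay, that a vertex adjacent to $\Delta_{u-3}$ which is omitted from $\Delta_{u-2}$ can never reappear in a later $\Delta_w$ with all $\chi_w>0$. You have correctly located the one nontrivial point of the lemma (which the paper itself passes over in silence), but locating it is not proving it.
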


  {\bf Example:} The word  $s_2(s_1s_3s_4)s_2(s_1s_3s_4)s_2s_4 $ in $B_{D_4}^+$ as in the previous example is depicted below together with the function $\theta = \chi$. 

\begin{figure}[H]
\centering 
\begin{tikzpicture}
\draw (-3,1.5) -- (-2,0.5) -- (-1,1.5) -- (0,0.5) --(1,1.5);
\draw  (-3,0.5) --  (1,0.5);
\draw (-3,-0.5) --  (-2,0.5) --  (-1,-0.5) --(0,0.5); 
\draw  (-4,0.5) --(-3,1.5); 
\draw  (-4,0.5) -- (-3,0.5); 
\draw  (-4,0.5) -- (-3,-0.5); 
\draw (0,0.5) --  (1,-0.5);

\draw [fill=white,white]  (-3,-0.5) circle [radius=0.2];
\draw [fill=white,white]  (-3,0.5) circle [radius=0.2];
\draw [fill=white,white]  (-3,1.5) circle [radius=0.2];
\draw [fill=white,white]  (-2,0.5) circle [radius=0.2];
\draw [fill=white,white]  (-1,-0.5) circle [radius=0.2];
\draw [fill=white,white]  (-1,0.5) circle [radius=0.2];
\draw [fill=white,white]  (-1,1.5) circle [radius=0.2];
\draw [fill=white,white]  (0,0.5) circle [radius=0.2];
\draw [fill=white,white]  (0,0.5) circle [radius=0.2];
\draw [fill=white,white] (1,-0.5) circle [radius=0.2];
\draw [fill=white,white]   (1,0.5)  circle [radius=0.2];
\draw [fill=white,white]   (1,1.5)  circle [radius=0.2];
\draw [fill=white,white]  (-4,0.5)  circle [radius=0.2];
\draw [fill=black,black]   (1,0.5)  circle [radius=0.05];
\draw [fill=black,black]  (1,-0.5) circle [radius=0.05];

\node at (-3,-0.5)  {\footnotesize 1};
\node at (-3,0.5)  {\footnotesize 1};
\node at (-3,1.5)  {\footnotesize 1};
\node at (-2,0.5)  {\footnotesize 2};

\node at (-1,-0.5)  {\footnotesize 1};
\node at (-1,0.5)  {\footnotesize 1};
\node at (-1,1.5)  {\footnotesize 1};
\node at (0,0.5)  {\footnotesize 1};
\node at (1,-0.5) {};
\node at (1,0.5) {};
\node at (1,1.5)  {\footnotesize 0};
\node at (-6.5,-1.5) {\footnotesize $p_1$};
\node [slice] at (-3,-1.5) {\footnotesize 1};
\node [slice] at (-2,-1.5) {\footnotesize 2};
\node [slice] at (-1,-1.5) {\footnotesize 3};
\node [slice] at (0,-1.5) {\footnotesize 4};
\node [slice] at (1,-1.5) {\footnotesize 5};
\node [slice] at (-4,-1.5) {\footnotesize 0};

\node at (-4,0.5) {\footnotesize 1};
\end{tikzpicture}
\caption{}
\end{figure} 

It is reasonable to ask which finite subsets of $(\Z\Gamma)_0$ correspond to words that are equal in the braid monoid $B_\Gamma^+$. Two words representing the same element of $B_\Gamma^+$ can be obtained one from another by applying braid and commutation relations. Hence, we would quite naturally expect to have some analogous transformations on the `$\Z \Gamma$ side' of the story, leaving the corresponding element of the braid monoid unchanged. We will introduce these transformations in a moment.

 First let $a = (n,j) \in \Lambda$ be such that ${b = (n+2,j) \notin \Lambda}$ (respectively ${b = (n-2, j) \notin \Lambda}$) and $(n+1,k) \notin \Lambda$ (respectively $(n-1,k) \notin \Lambda$) for every $k \in N(j)$. Set $\overline{\Lambda} = \big(\Lambda \setminus \{a\} \big) \cup \{b\} $. In addition, let $\overline{\theta}(b) = \theta(a)$ and $\overline{\theta}|_{\overline{\Lambda}\setminus\{b\}} = \theta|_{\Lambda\setminus\{a\}}$.  We refer to this procedure as commutation.
 
 \begin{figure}[H]\centering
\begin{tikzpicture}
\draw (-2.5,0.5) --  (-1.5,0.5)  -- (-0.5,0.5) -- (-1.5,1.5) --  (-2.5,0.5); 
\draw (-2.5,0.5) --  (-1.5, -0.5) -- (-0.5,0.5);
\draw (1.5,0.5) -- (2.5,0.5) --  (3.5,0.5) -- (2.5,1.5) -- (1.5,0.5) -- (2.5,-0.5) -- (3.5,0.5);

\draw [fill=white,white] (-2.5,0.5)  circle [radius=0.3];
\draw [fill=white,white] (-1.5,0.5) circle [radius=0.2];
\draw [fill=white,white] (-0.5,0.5)  circle [radius=0.2];
\draw [fill=white,white] (-1.5,1.5)  circle [radius=0.2];
\draw [fill=white,white] (-1.5,-0.5) circle [radius=0.2];

\draw [fill=white,white] (1.5,0.5)  circle [radius=0.2];
\draw [fill=white,white] (2.5,0.5) circle [radius=0.2];
\draw [fill=white,white] (3.5,0.5)  circle [radius=0.3];
\draw [fill=white,white] (2.5,1.5) circle [radius=0.2];
\draw [fill=white,white] (2.5,-0.5) circle [radius=0.2];
\draw [fill=black,black]  (-1.5,0.5) circle [radius=0.05];
\draw [fill=black,black]  (-0.5,0.5) circle [radius=0.05];
\draw [fill=black,black]  (-1.5,1.5) circle [radius=0.05];
\draw [fill=black,black]  (-1.5,-0.5) circle [radius=0.05];
\draw [fill=black,black]  (1.5,0.5) circle [radius=0.05];
\draw [fill=black,black]  (2.5,0.5) circle [radius=0.05];
\draw [fill=black,black]  (2.5,1.5) circle [radius=0.05];
\draw [fill=black,black]  (2.5,-0.5) circle [radius=0.05];

\node at (-2.5,0.5) {\footnotesize $\theta(a)$};
\node at (-1.5,0.5) {};
\node at (-0.5,0.5) {};
\node at (-1.5,1.5) {};
\node at (-1.5,-0.5) {};

\node at (1.5,0.5) {};
\node at (2.5,0.5) {};
\node at (3.5,0.5) {\footnotesize $\theta(a)$};
\node at (2.5,1.5) {};
\node at (2.5,-0.5) {};
\node at (-4.5,-1.5)   {\footnotesize $p_1$};
\node at (-2.5,-1.5)  [slice]{\footnotesize $n$};
\node at (-1.5,-1.5)  [slice] {\footnotesize $n+1$};
\node at (-0.5,-1.5)  [slice] {\footnotesize $n+2$};

\node [slice] at (1.5,-1.5)  {\footnotesize $n$};
\node [slice] at (2.5,-1.5)  {\footnotesize $n+1$};
\node [slice] at (3.5,-1.5) {\footnotesize $n+2$};
\end{tikzpicture}
\caption{}

\end{figure}

The following statement is clear.

 \begin{lemma} Commutation does not change the corresponding word in $B_\Gamma^+$, i.e. $\gamma_{\Lambda} = \gamma_{\overline{\Lambda}}$. In addition, if $(\Lambda, \theta)$ satisfies mesh relations, then so does  $(\overline{\Lambda}, \overline{\theta})$.
 \end{lemma}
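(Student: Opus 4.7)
The plan is to handle the two assertions separately, with the second requiring a short case analysis that the hypothesis on $(n{+}1,k)\notin\Lambda$ tames.

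For $\gamma_\Lambda=\gamma_{\overline{\Lambda}}$, I would write $\gamma_\Lambda=\cdots s_{\Sigma_{n-1}}s_{\Sigma_n}s_{\Sigma_{n+1}}s_{\Sigma_{n+2}}\cdots$ and exploit that each $\Sigma_m\subseteq V^m$ is an independent set in the bipartite graph $\Gamma$, so any product $s_{\Sigma_m}$ may be freely re-ordered via commutation. Starting from this, I would extract $s_j$ from $s_{\Sigma_n}$, commute it past $s_{\Sigma_{n+1}}$ (legal because $N(j)\cap\Sigma_{n+1}=\emptyset$ by hypothesis, so $s_j$ commutes with every factor of $s_{\Sigma_{n+1}}$), and finally absorb it into $s_{\Sigma_{n+2}}$ (legal since $j\in V^n=V^{n+2}$ is non-adjacent to every other element of $V^{n+2}$, and $j\notin\Sigma_{n+2}$ because $b\notin\Lambda$). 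The resulting word is literally $\gamma_{\overline{\Lambda}}$.

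For the preservation of mesh relations I would check that each relation in $(\overline{\Lambda},\overline{\theta})$ matches one in $(\Lambda,\theta)$. The mesh structure in any column $k\neq j$ is unchanged because $\overline{\Lambda}$ and $\Lambda$ differ only along the $j$-column; for the $j$-column, the unique (possibly infinite) mesh ending at $a$ corresponds to the unique mesh ending at $b$ with the same left endpoint $\tau_{\overline{\Lambda}}(b)=\tau_\Lambda(a)$, and analogously for any mesh starting at $a$. The endpoint $\theta$-values agree by construction since $\overline{\theta}(b)=\theta(a)$ and $\overline{\theta}=\theta$ elsewhere.

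The one genuinely substantive point is that the contents of these meshes sum to the same value. Two potential discrepancies could arise: (i)~a $j$-mesh might pick up a vertex at position $n{+}1$ with second coordinate in $N(j)$, but this is ruled out directly by the standing hypothesis $(n{+}1,k)\notin\Lambda$ for $k\in N(j)$ (which transfers to $\overline{\Lambda}$ since the modification only touches positions $n$ and $n{+}2$); (ii)~a $k$-mesh with $k\in N(j)$ could acquire $a$ in $\Lambda$ but not $b$ in $\overline{\Lambda}$, or vice versa. The resolution of (ii) is the main (and only nontrivial) obstacle, and it is handled by a parity check: since $k\in V^{n+1}$, any $k$-vertex in $\Lambda$ sits at a position $\equiv n{+}1\pmod{2}$, and position $n{+}1$ itself is forbidden by the hypothesis, so the endpoints $c_1,c_2$ of any $k$-mesh whose open range meets $\{n,n{+}2\}$ must satisfy $p_1(c_1)\le n{-}1$ and $p_1(c_2)\ge n{+}3$. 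Consequently the conditions $p_1(c_1)<n<p_1(c_2)$ and $p_1(c_1)<n{+}2<p_1(c_2)$ are equivalent, so $a$ enters the $\Lambda$-mesh if and only if $b$ enters the $\overline{\Lambda}$-mesh, and since $\overline{\theta}(b)=\theta(a)$ the mesh-sum identity is preserved in both directions.
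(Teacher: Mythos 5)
Your proof is correct; the paper itself offers no argument here (it declares the lemma ``clear''), and your write-up is precisely the natural verification that is being left to the reader: $s_j$ commutes with every letter of $s_{\Sigma_{n+1}}$ by the hypothesis $(n{+}1,k)\notin\Lambda$ for $k\in N(j)$ and with every letter of its own and the target slice by bipartiteness, while the mesh-relation check reduces to the observation that no vertex with second coordinate in $N(j)$ can occupy positions $n$, $n{+}1$ or $n{+}2$ (by parity of $\Z\Gamma$ and the same hypothesis), so every affected mesh sees $a$ in $\Lambda$ exactly when it sees $b$ in $\overline{\Lambda}$, with $\overline{\theta}(b)=\theta(a)$. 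No gaps.
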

 
 Now braid relations in $B_\Gamma^+$ can be also depicted in terms of vertices of $\Z\Gamma$. We say that $a,b,c \in (\Z\Gamma)_0$ form {\it a braid} if $a = (n,j)$, $b = (n+1,k)$, $c = (n+2,j)$ for some $n \in \Z$ and some $j,k \in \Gamma_0$ such that $k \in N(j)$. Now let $a,b,c \in \Lambda$ as above form a braid and suppose in addition that $(n+1,t) \notin \Lambda$ for any ${t \in N(j) \setminus \{k\}}$, $(n+2,l) \notin \Lambda$ for any ${l \in N(k) \setminus \{j\}}$ and $d = (n+3, k) \notin \Lambda$. Set $\overline{\Lambda} = (\Lambda \setminus \{a \}) \cup  \{d\}$,  $\overline{\theta}|_{\overline{\Lambda} \setminus \{b,c,d\}} = \theta|_{\Lambda \setminus \{a,b,c\}}$, $\overline{\theta}(b) = \theta(c), \overline{\theta}(c) = \theta(b), \overline{\theta}(d) = \theta(a)$. We refer to this procedure as braiding. 
 
 \begin{figure}[H]
\centering
\begin{tikzpicture}
\draw (-2.5,0.5) --  (-1.5,0.5)  -- (-0.5,0.5) -- (-1.5,1.5) --  (-2.5,0.5); 
\draw (-2.5,0.5) --  (-1.5, -0.5) -- (-0.5,0.5);
\draw (3.5,0.5) -- (4.5,0.5) --  (5.5,0.5) -- (4.5,1.5) -- (3.5,0.5) -- (4.5,-0.5) -- (5.5,0.5);
\draw (-1.5,1.5) --  (-0.5,2.5) -- (0.5,1.5) -- (-0.5,0.5); 
\draw (4.5,1.5) --  (5.5,2.5) -- (6.5,1.5) -- (5.5,0.5); 

\draw [fill=white,white] (-2.5,0.5)  circle [radius=0.3];
\draw [fill=white,white] (-1.5,0.5) circle [radius=0.2];
\draw [fill=white,white] (-0.5,0.5)  circle [radius=0.3];
\draw [fill=white,white] (-1.5,1.5)  circle [radius=0.3];
\draw [fill=white,white] (-1.5,-0.5) circle [radius=0.2];
\draw [fill=white,white] (3.5,0.5)  circle [radius=0.2];
\draw [fill=white,white] (4.5,0.5) circle [radius=0.3];
\draw [fill=white,white] (3.5,0.5)  circle [radius=0.2];
\draw [fill=white,white] (4.5,1.5) circle [radius=0.2];
\draw [fill=white,white] (4.5,-0.5) circle [radius=0.2];
\draw [fill=white,white] (5.5,2.5) circle [radius=0.2];
\draw [fill=white,white]  (6.5,1.5) circle [radius=0.2];
\draw [fill=white,white]  (0.5,1.5) circle [radius=0.2];
\draw [fill=white,white]  (5.5,0.5) circle [radius=0.3];
\draw [fill=white,white]  (-0.5,2.5) circle [radius=0.2];
draw [fill=black,black]  (-1.5,0.5) circle [radius=0.05];
\draw [fill=black,black]  (-1.5,-0.5) circle [radius=0.05];
\draw [fill=black,black]  (3.5,0.5) circle [radius=0.05];
\draw [fill=black,black]  (4.5,0.5) circle [radius=0.05];
\draw [fill=black,black]  (4.5,-0.5) circle [radius=0.05];
\draw [fill=black,black]  (5.5,2.5)  circle [radius=0.05];
\draw [fill=black,black]   (0.5,1.5) circle [radius=0.05];
\draw [fill=black,black]  (-0.5,2.5) circle [radius=0.05];
\draw [fill=black,black] (-1.5,0.5) circle [radius=0.05];

\node at (-2.5,0.5) {\footnotesize $\theta(a)$};
\node at (-1.5,0.5) {};
\node at (-0.5,0.5) {\footnotesize $\theta(c)$};
\node at (-1.5,1.5) {\footnotesize $\theta(b)$};
\node at (-1.5,-0.5) {};

\node at (3.5,0.5) {};
\node at (4.5,0.5) {};
\node at (5.5,0.5) {\footnotesize $\theta(b)$};
\node at (4.5,1.5) {\footnotesize $\theta(c)$};
\node at (4.5,-0.5) {};
\node at (-4.5,-1.5)   {\footnotesize $p_1$};
\node at (-2.5,-1.5)  [slice]{\footnotesize $n$};
\node at (-1.5,-1.5)  [slice] {\footnotesize $n+1$};
\node at (-0.5,-1.5)  [slice] {\footnotesize $n+2$};
\node at (0.5,-1.5)  [slice] {\footnotesize $n+3$};

\node [slice] at (3.5,-1.5)  {\footnotesize $n$};
\node [slice] at (4.5,-1.5)  {\footnotesize $n+1$};
\node [slice] at (5.5,-1.5) {\footnotesize $n+2$};
\node [slice] at (6.5,-1.5) {\footnotesize $n+3$};
\node at (-0.5,2.5) {};
\node at (0.5,1.5) {};
\node at (6.5,1.5) {\footnotesize $\theta(a)$};
\node at (5.5,2.5) {};
\end{tikzpicture}
\caption{}
\end{figure} 
 
 \begin{lemma} Braiding does not change the corresponding word in $B_{\Gamma}^+$, i.e. $\gamma_\Lambda = \gamma_{\overline{\Lambda}}$. Moreover, if $(\Lambda, \theta)$ satisfies mesh relations, then so does $(\overline{\Lambda}, \overline{\theta})$.  \end{lemma}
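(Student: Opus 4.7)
The plan is to prove the two assertions separately. For $\gamma_\Lambda = \gamma_{\overline{\Lambda}}$, I would zoom in on the subword of $\gamma_\Lambda$ contributed by slices $n, n+1, n+2, n+3$. Because $\Gamma$ is bipartite with parts $V^0$ and $V^1$, all generators appearing in the same slice commute. The hypothesis that $(n+1,t)\notin\Lambda$ for $t \in N(j)\setminus\{k\}$ ensures that every generator of slice $n+1$ other than $s_k$ commutes with $s_j$; symmetrically, $(n+2,l)\notin\Lambda$ for $l \in N(k)\setminus\{j\}$ gives that every generator of slice $n+2$ other than $s_j$ commutes with $s_k$; and $(n+3,k)\notin\Lambda$, combined with bipartiteness, means $s_k$ commutes with every generator of slice $n+3$. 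Sliding these commuting factors aside reduces the identity to a single application of the braid relation $s_j s_k s_j = s_k s_j s_k$, after which the new copy of $s_k$ is moved into the $(n+3)$-th slice to yield $\gamma_{\overline{\Lambda}}$.

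For the preservation of mesh relations, I would check the defining equation at every $e \in \overline{\Lambda}$, organized by the value of $p_2(e)$. Elements with $p_2(e) \notin \{j,k\} \cup N(j) \cup N(k)$ are untouched. The remaining cases split into four according to whether $p_2(e)$ equals $j$, equals $k$, lies in $N(j)\setminus\{k\}$, or lies in $N(k)\setminus\{j\}$. In each of them the discrepancy between the original relation and the new one telescopes, after also invoking the original relation at $a$ where appropriate, to the single identity
$$\theta(b) = \theta(a) + \theta(c),$$
which is precisely the original mesh relation at $c$ in $\Lambda$ (because $mesh_\Lambda(a,c)=\{b\}$ by the braiding hypotheses).

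The most delicate verifications are at the elements whose data change most drastically: namely $e=c$ (whose $\tau$ jumps back to $\tau_\Lambda(a)$ and whose mesh acquires $\{b\}$), the new vertex $e = d$ (whose mesh is forced to be exactly $\{c\}$), the first element $(m_1,j)$ at position $j$ after $c$ in $\overline{\Lambda}$ (whose mesh newly contains $d$), and the first element $(l_1,k)$ at position $k$ after $d$ in $\overline{\Lambda}$. I expect $(l_1,k)$ to be the main obstacle: here one must simultaneously account for the jump $\tau(l_1,k): b \leadsto d$, for the removal of $c$ from the mesh, and for the a priori possibility that extra elements at slice $n+3$ in positions $N(k)$ enter the mesh. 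The crucial observation that rules out the last possibility is a bipartiteness-parity check: any vertex $(n+3,t)$ of $\Z\Gamma$ has $t\in V^{n+1}$, while $N(k)\subseteq V^n$, so the two sets are disjoint. An analogous bipartiteness argument handles the cases $p_2(e) \in N(j)\setminus\{k\}$ and $p_2(e)\in N(k)\setminus\{j\}$: the possible placement of $\tau_\Lambda(e)$ or of $e$ itself at the forbidden slice $n+1$ (respectively $n+2$) in a position of $N(j)\setminus\{k\}$ (resp. $N(k)\setminus\{j\}$) is ruled out directly by the braiding hypotheses, so either both of $a,c$ (respectively both of $b,d$) belong to the mesh in question, or neither does, and the identity above absorbs the discrepancy.
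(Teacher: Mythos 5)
Your proposal is correct and follows essentially the same route as the paper: the word identity is reduced by commutations to a single application of $s_js_ks_j=s_ks_js_k$, and the preservation of mesh relations rests on the single identity $\theta(a)+\theta(c)=\theta(b)$, i.e.\ the original mesh relation at $c$. The only difference is bookkeeping --- you organize the verification by the terminal vertex of each mesh and its position class, whereas the paper splits meshes into those having $a,b,c,d$ as endpoints versus those containing them as interior points --- and your explicit parity checks make precise what the paper dismisses as ``analogous or obvious''.
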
 
 
 \begin{proof}
 First we establish that $\gamma_\Lambda = \gamma_{\overline{\Lambda}}$. It is sufficient to show that $s_{\Sigma_n}s_{\Sigma_{n+1}}s_{\Sigma_{n+2}}s_{\Sigma_{n+3}} = s_{\overline{\Sigma}_n}s_{\overline{\Sigma}_{n+1}}s_{\overline{\Sigma}_{n+2}}s_{\overline{\Sigma}_{n+3}}$ where $\Sigma_p=p_2\left(\Theta_p \cap \Lambda\right)$ and $\overline{\Sigma}_p=p_2\left(\Theta_p \cap \overline{\Lambda}\right)$. Observe that $\overline{\Sigma}_n = \Sigma_n \setminus \{j\}$, $\overline{\Sigma}_{n+1} = \Sigma_{n+1}$, $\overline{\Sigma}_{n+2}= \Sigma_{n+2}$ and $\overline{\Sigma}_{n+3} = \Sigma_{n+3} \cup \{k\}$. Hence 
 
 $$s_{\Sigma_n}s_{\Sigma_{n+1}}s_{\Sigma_{n+2}}s_{\Sigma_{n+3}} = s_{\overline{\Sigma}_n}s_js_{\Sigma_{n+1} \setminus \{k\}} s_k s_j s_{\Sigma_{n+2} \setminus \{j\}}s_{\Sigma_{n+3}} =  $$ $$ s_{\overline{\Sigma}_n}s_{\Sigma_{n+1} \setminus \{k\}} s_j s_k s_j s_{\Sigma_{n+2} \setminus \{j\}}s_{\Sigma_{n+3}} =  s_{\overline{\Sigma}_n}s_{\Sigma_{n+1} \setminus \{k\}} s_k s_j s_k s_{\Sigma_{n+2} \setminus \{j\}} s_{\Sigma_{n+3}} = $$ 
 $$s_{\overline{\Sigma}_n}s_{\overline{\Sigma}_{n+1}} s_j  s_{\Sigma_{n+2} \setminus \{j\}} s_k s_{\Sigma_{n+3}} = s_{\overline{\Sigma}_n}s_{\overline{\Sigma}_{n+1}}s_{\overline{\Sigma}_{n+2}}s_{\overline{\Sigma}_{n+3}} $$
 
 Now we will show that braiding respects mesh relations. Since $\overline{\theta}$ differs from $\theta$ only on $a,b,c$ and $d$, it is sufficient to check the relations only for generalized meshes these four vertices take part in. There are two types of such generalized meshes: those that have one of $a,b,c$ and $d$ as starting and/or ending vertices and those that do not. 
 
 \begin{enumerate}
     \item Note that $\tau_{\overline{\Lambda}}(b)=\tau_{{\Lambda}}(b)$ and $mesh_{\overline{\Lambda}}(\tau_{\overline{\Lambda}}(b),b) = mesh_{\Lambda}(\tau_{{\Lambda}}(b),b) \setminus \{a\}$. We have 
     $$\theta\tau_{{\Lambda}}(b) + \theta(b) = \sum_{x \in mesh_{\Lambda}(\tau_{{\Lambda}}(b), b)} \theta(x)  = \sum_{x \in mesh_{\overline{\Lambda}}(\tau_{\overline{\Lambda}}(b),b)} \theta(x)  + \theta(a)$$ 
     and $\theta(a) + \theta(c) = \theta(b)$, because $a,b,c$ form a generlized mesh in $\Lambda$ starting at $a$ and ending at $c$. Then 
        $$\overline{\theta}\tau_{\overline{\Lambda}}(b) + \overline{\theta}(b) = \theta\tau_{{\Lambda}}(b) + \theta(c) = \sum_{ x\in mesh_{\overline{\Lambda}}(\tau_{\overline{\Lambda}}(b),b)} \theta(x).$$ 
        The remaining cases are either analogous to the one just discussed or obvious.

     \item Now consider some generalized mesh $mesh_{\overline{\Lambda}}(x,y)$ in $\overline{\Lambda}$ such that $x,y\not\in \{a,b,c,d\}$. If $b \in mesh_{\overline{\Lambda}}(x,y)$, then $d \in mesh_{\overline{\Lambda}}(x,y)$ and vise versa. The corresponding mesh relation remains valid after braiding, because $d \notin \Lambda$ and $\overline{\theta}(b) + \overline{\theta}(d) = \theta(c) + \theta(a) = \theta(b)$. If $c \in mesh_{\overline{\Lambda}}(x,y)$, then $a \in mesh_{\Lambda}(x,y)$ and vise versa. The corresponding mesh relation remains valid after braiding, because $\overline{\theta}(c) = \theta(b) = \theta(c) + \theta(a)$. 
 \end{enumerate}
 \end{proof}
 
 Now we are fully equipped to state the main result of this section. As we have explained earlier, the following theorem applied to $(\Lambda, \theta)$ as in Lemma \ref{in_func} finishes the proof of Lemma \ref{princ}.
 
 \begin{theorem}\label{mesh_thm} Let $(\Lambda, \theta)$ satisfying mesh relations be such that $\theta$ is non-negative on $\Lambda$, vanishing on precisely one vertex of $\Lambda$, $\theta(-\infty,i) = -1$ for some $i \in \Gamma_0$ and $\theta(-\infty,k) = 0$ for all $k \in \Gamma_0\setminus \{i\}$. Then $\gamma = \gamma_{\Lambda}$ is left-divisible by some $s_j$ in $B_\Gamma^+$ with $j\neq i$.  \end{theorem}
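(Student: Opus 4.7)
The plan is to rearrange $\Lambda$ via commutations and braidings — which by the preceding lemmas preserve $\gamma_\Lambda$ and the mesh-relation property — into an equivalent configuration $\Lambda'$ whose leftmost non-empty slice contains a vertex with second coordinate different from $i$, thereby exhibiting the required left factor $s_j$. Let $v_0\in\Lambda$ denote the unique vertex with $\theta(v_0)=0$. The whole argument is driven by pushing $v_0$ as far left as possible and then reading off $p_2(v_0)$.

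I would first handle the \emph{terminal case}, when $\tau_\Lambda(v_0)=(-\infty,p_2(v_0))$. The mesh relation at $v_0$ reads
\[
\theta(-\infty,p_2(v_0))=\sum_{c\in mesh_\Lambda(\tau_\Lambda(v_0),v_0)}\theta(c);
\]
the right-hand side is a sum of strictly positive values unless the mesh is empty (since $v_0$ is the only zero of $\theta$ in $\Lambda$), while the left-hand side is $\le 0$. Both sides must therefore vanish: the mesh is empty and $p_2(v_0)\ne i$. The combined information says that no vertex of $\Lambda$ lies in rows $N(p_2(v_0))\cup\{p_2(v_0)\}$ strictly to the left of $v_0$, so a finite sequence of commutations translates $v_0$ to a slice strictly smaller than $\min_{v\in\Lambda\setminus\{v_0\}}p_1(v)$. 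The new leftmost slice is $\{v_0\}$ with $p_2(v_0)\ne i$, which finishes the proof.

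In the complementary \emph{pushing case}, $\tau_\Lambda(v_0)=(n_{-1},j_0)\in\Lambda$ is finite, and the mesh relation at $v_0$ forces $\theta(\tau_\Lambda(v_0))=\sum_{c\in mesh_\Lambda(\tau_\Lambda(v_0),v_0)}\theta(c)>0$, so the mesh is non-empty. My plan is to perform, after suitable preparatory commutations, a braid move with $a=\tau_\Lambda(v_0)=(n_0-2,j_0)$, $b=(n_0-1,k)\in mesh_\Lambda(\tau_\Lambda(v_0),v_0)$, and $c=v_0$, where $n_0=p_1(v_0)$. By the definition of braiding, $\overline\theta(b)=\theta(c)=0$, so the unique zero of $\overline\theta$ now sits at $(n_0-1,k)$, strictly to the left of $v_0$. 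Since $p_1(v_0)$ strictly decreases at each successful braid move and $\Lambda$ is finite, the iteration must terminate in the terminal case above.

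The principal obstacle — and, I expect, the bulk of the technical work — is to verify that the braid move can actually be set up: its definition imposes emptiness conditions on several nearby slices, and in general one must prepare $\Lambda$ by commutations in order to bring $\tau_\Lambda(v_0)$ to $(n_0-2,j_0)$, to secure a mesh vertex at slice $n_0-1$, and to clear the remaining empty-slot conditions. I would organise this via a secondary induction on $p_1(v_0)-p_1(\tau_\Lambda(v_0))$: when it equals $2$, the braid is immediate modulo checking the local emptiness conditions; otherwise, either slice $n_0-1$ is empty in the rows $N(j_0)$ — in which case $v_0$ commutes one step leftward and the secondary measure drops — or some obstructing vertex of $\Lambda$ can be pushed out of the relevant window by applying the terminal-case analysis to its own neighbourhood. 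The ADE assumption on $\Gamma$ (bounded valence, no cycles), together with the single-zero and non-negativity hypotheses on $\theta$, is what I expect to make this local combinatorics manageable.
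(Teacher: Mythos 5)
Your overall architecture --- push the unique zero of $\theta$ leftwards by braidings and commutations, then read off a left divisor $s_j$ with $j\neq i$ from the infinite mesh relation once the zero vertex is leftmost --- is the same as the paper's, and your terminal case is handled correctly ($\tau_\Lambda(v_0)=(-\infty,p_2(v_0))$ forces the mesh to be empty and $\theta(-\infty,p_2(v_0))=0$, hence $p_2(v_0)\neq i$, and $v_0$ commutes freely to the far left). The genuine gap is in the pushing case. A braiding at $(\tau_\Lambda(v_0),b,v_0)$ requires, by its definition, that $b$ be the \emph{only} vertex of $\Lambda$ in the intermediate slice lying in a row of $N(p_2(v_0))$; equivalently, after compressing the window by commutations, it requires $|mesh_\Lambda(\tau_\Lambda(v_0),v_0)|=1$. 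When the generalized mesh ending at $v_0$ has two or more elements --- which actually occurs, e.g.\ in the paper's own $D_4$ example, where at one stage the zero vertex has $\tau_\Lambda(v_0)$ two slices to its left and \emph{two} mesh vertices in the single intermediate slice --- no braiding at $v_0$ is possible, and none of your remedies applies: the extra mesh vertices sit in rows adjacent to $p_2(v_0)$ strictly between $\tau_\Lambda(v_0)$ and $v_0$, so commutation cannot move them past either endpoint (the endpoint itself blocks the move), and ``applying the terminal-case analysis to their neighbourhood'' is not a defined operation for vertices with $\theta>0$. Your secondary induction on $p_1(v_0)-p_1(\tau_\Lambda(v_0))$ therefore gets stuck precisely at its base case $=2$.

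The missing idea is the paper's chain-of-meshes argument (Lemma \ref{mesh}): starting from $a_0=v_0$, one repeatedly passes to a vertex $a_{m+1}\in mesh_\Lambda(\tau_\Lambda(a_m),a_m)\setminus\{\tau_\Lambda(a_{m-1})\}$ of smallest first coordinate, obtaining a chain $a_0,\dots,a_k$ with associated mesh sizes $(q_0,\dots,q_k)$; using the mesh relations together with the boundary values $\theta(-\infty,\cdot)\in\{0,-1\}$, one shows that a \emph{maximal} such chain must end with a singleton mesh, $q_k=1$, where a braiding is legal. Crucially, that braiding does not move the zero vertex at all; it decreases $q_{k-1}$, and the induction runs on the lexicographic order of $(q_0,\dots,q_k)$ rather than on the position of $v_0$. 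Only once the whole sequence collapses to $(1)$ does the zero actually advance one slice to the left. Without this mechanism (or some substitute for meshes of size $\geq 2$), your proof does not go through.
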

 
 To prove this theorem we will first need the following lemma.
 
 \begin{lemma}\label{mesh} Let $(\Lambda, \theta)$ be as in Theorem \ref{mesh_thm} and $a \in \Lambda$ such that $\theta(a) = 0$, $\tau_{\Lambda}(a)\not=-\infty$.
 Then there exists a sequence of commutations and braidings turning $(\Lambda, \theta)$ into some $(\Lambda', \theta')$ such that
 $$|\{x \in \Lambda' \mid p_1(x) \leq p_1(b)  \}|<|\{x \in \Lambda \mid p_1(x) \leq p_1(a)  \}|,$$
 where $b$ is the unique vertex of $\Lambda'$ on which $\theta'$ vanishes.
 \end{lemma}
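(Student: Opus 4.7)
Let $a = (n, j) \in \Lambda$ be the unique zero of $\theta$ and let $a' = \tau_{\Lambda}(a) = (m, j) \in \Lambda$ with $m < n$ (which exists in $\Lambda$ by hypothesis). The mesh relation at $a$ gives $\theta(a') = \sum_{c \in mesh_{\Lambda}(a',a)} \theta(c)$, and since $a$ is the only zero of $\theta$ on $\Lambda$, every vertex of $mesh_{\Lambda}(a', a)$ has strictly positive $\theta$-value, so in particular the mesh is nonempty. My target is the "key braid" of the triple $\bigl((n-2,j),\, b_0,\, a\bigr)$ for some $b_0 \in mesh_{\Lambda}(a', a)$ at slice $n-1$, say $b_0 = (n-1, k)$ with $k \in N(j)$. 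By the braiding rule this move removes the vertex at $(n-2, j)$, swaps the $\theta$-values at $b_0$ and $a$ so that the new zero lands at $b_0$, and inserts a fresh vertex at $(n+1, k)$. Counting: in the resulting set $\Lambda'$, the vertices with $p_1 \leq n-1$ are the old vertices with $p_1 \leq n-1$ minus the one at $(n-2, j)$, hence one fewer; on the other hand the original count at $\leq n$ in $\Lambda$ included $a$ itself. So the new count at $\leq p_1(b_0) = n-1$ is at most the old count at $\leq n$ minus $2$, which yields the required strict inequality.

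The bulk of the proof is the preparation needed for the key braid to actually be available. I would carry this out in three stages. Stage (I): if $m < n-2$, advance $a'$ forward along the $j$-column two slices at a time via commutations; any blocking vertex in the way is a mesh element of $a$ (type in $N(j)$ sitting strictly between slices $m$ and $n$), which must be moved aside first, with the rigid structure imposed by the mesh relations together with uniqueness of the zero controlling its local environment. Stage (II): once $a'$ has been brought to $(n-2, j)$, choose $b_0$ at slice $n-1$ and, if further mesh candidates coexist at this slice, commute them out of it; these extras all lie in $V^{n-1}$ with types in $N(j)$ and are pairwise non-adjacent in $\Gamma$. Stage (III): ensure the forward braid conditions by clearing any vertex of type in $N(k) \setminus \{j\}$ at slice $n$ and any $k$-vertex at slice $n+1$, again via commutations licensed by mesh-relation rigidity. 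Once all conditions hold, execute the key braid; since all the moves preserve the mesh relations and transport the single zero from $a$ to $b_0$, the resulting pair $(\Lambda', \theta')$ meets the hypothesis of the lemma and gives the required count reduction.

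The main obstacle is the preparation phase for ADE diagrams in which $j$ is a three-valent vertex (the branch of $D_n$ or $E_n$): there may be up to three mesh candidates at slice $n - 1$, and the auxiliary moves needed to clear them, or to clear the obstructions near slices $n$ and $n+1$, can themselves be blocked by other vertices (for instance the commutation of an extra mesh element $b'=(n-1,k')$ is obstructed by both $a$ and $a'$, since $j \in N(k')$). Overcoming this will require auxiliary braidings away from $a$ and, I expect, a secondary induction on a local complexity measure such as the number of vertices in a bounded strip around slice $n$. The proof will rely throughout on systematically invoking the mesh relations at each obstructing vertex — combined with uniqueness of the zero of $\theta$ — to force its local configuration to permit the next required move.
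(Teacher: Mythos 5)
Your endgame is the same as the paper's: when $mesh_{\Lambda}(\tau_{\Lambda}(a),a)$ is a singleton $\{b_0\}$, commuting $\tau_{\Lambda}(a)$ up to $(n-2,j)$ and braiding the triple $(\tau_{\Lambda}(a),b_0,a)$ transports the zero to slice $n-1$, deletes the vertex at $(n-2,j)$, and drops the count by at least two; that part is correct. The genuine gap is the case $|mesh_{\Lambda}(\tau_{\Lambda}(a),a)|>1$, which you correctly identify as the obstacle but do not resolve. Your plan of commuting the extra mesh elements out of slice $n-1$ cannot work as stated: as you yourself observe, such an element $b'=(n-1,k')$ is blocked in both directions by $a=(n,j)$ and by $\tau_{\Lambda}(a)$, and the speculative fix you offer (auxiliary braidings controlled by an induction on the number of vertices in a bounded strip around slice $n$) is not viable, because the configurations that must be modified extend arbitrarily far to the left of $a$ and no local complexity measure near slice $n$ decreases under the required moves.

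The paper's resolution is global and of a different nature. It builds a chain $a_0=a,a_1,a_2,\dots$ going leftward, where $a_{m+1}$ is chosen with smallest first coordinate in $mesh_{\Lambda}(\tau_{\Lambda}(a_m),a_m)\setminus\{\tau_{\Lambda}(a_{m-1})\}$, and proves that any maximal such chain terminates at some $a_k$ whose mesh is a singleton. That termination claim is the crux: it is proved by a telescoping inequality obtained by summing the mesh relations along the chain, using the boundary values $\theta(-\infty,i)=-1$, $\theta(-\infty,k)=0$ and the assumption $q_0=|mesh_{\Lambda}(\tau_{\Lambda}(a_0),a_0)|>1$ to derive a contradiction. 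One then braids at $a_k$ (far from $a$; the zero does not move at this stage) and checks that this strictly decreases the sequence $(q_0,\dots,q_k)$ of mesh sizes in lexicographic order, so the whole lemma follows by induction on this lexicographic invariant until $seq(\Lambda)=(1)$ and the key braid at $a$ applies. None of this machinery --- the backward chain, the termination argument via the values at $-\infty$, or the lexicographic induction --- appears in your proposal, so the multi-element-mesh case, which is where all the difficulty lies, remains unproved.
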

 
 Let us first show how Lemma \ref{mesh} implies Theorem \ref{mesh_thm}.
 
 \begin{proof}[Proof of Theorem \ref{mesh_thm}] It is sufficient to show that applying a sequence of commutations and braidings we can transform $\Lambda$ into some $\Lambda'$ such that $\Theta_n \cap \Lambda' \neq \{i\}$, where $n$ is the smallest integer such that $\Theta_n \cap \Lambda' \neq \varnothing$. Note that commutation and braiding do not change the multiset  $\{\theta(x)\}_{x \in \Lambda}$ of values of $\theta$.
 
 Applying Lemma \ref{mesh} several times, one can obtain a pair $(\Lambda'', \theta'')$ satisfying mesh relations such that $\gamma_{\Lambda''} = \gamma_{\Lambda}$ and $\theta''$ vanishes on (and only on) a vertex with the smallest first coordinate among all vertices of $\Lambda''$. Let $\theta''(x) = 0$. Then $\gamma_{\Lambda''} = \gamma_{\Lambda}$ is divisible by $s_{p_2(x)}$ on the left. It remains to ascertain that ${p_2(x) \neq i}$. Indeed, since $x$ is a vertex  with the smallest $p_1(x)$ among all vertices in $\Lambda''$, there is an infinite generalized mesh ending at $x$ and starting at $(-\infty, j)$ for some $j \in \Gamma_0$. Since $(\Lambda'', \theta'')$ satisfies infinite mesh relations, $0 = \theta''(-\infty, j) + \theta''(x) = \theta''(-\infty, j)$. We immediately see that $j \neq i$, since $\theta''(-\infty, i) = -1$.
 \end{proof} 
 
  Now we ready to prove Lemma \ref{mesh}.
 
 \begin{proof}[Proof of Lemma \ref{mesh}]
 Let $a_0 = a = (n,j)$.
 Since there is only one vertex of $\Lambda$ on which $\theta$ vanishes and $\tau_{\Lambda}(a_0)\in\Lambda$, $\theta\tau_{\Lambda}(a_0) + \theta(a_0) = \theta\tau_{\Lambda}(a_0) > 0$. Hence ${mesh_{\Lambda}(\tau_{\Lambda}(a_0),a_0) \neq \varnothing}$. Let ${a_1 = (n_1,t) \in mesh_{\Lambda}(\tau_{\Lambda}(a_0),a_0)}$ be any vertex with the smallest first coordinate among vertices of ${mesh_{\Lambda}(\tau_{\Lambda}(a_0),a_0)}$. Then, possibly applying several commutations, one can assume that $\tau_{\Lambda}(a_0) = (n_1-1,j)$. If ${|mesh_{\Lambda}(\tau_{\Lambda}(a_0),a_0)| = 1}$, then, also possibly applying several commutations, one can assume that $\tau_{\Lambda}(a_0),a_1,a_0$ form a braid. Then we can apply braiding to the triple $(\tau_{\Lambda}(a_0),a_1,a_0)$ after some commutations as described below and we are done. Suppose now that ${q_0 =|mesh_{\Lambda}(\tau_{\Lambda}(a_0),a_0)| > 1}$.

 Note that $q_1 = |mesh_{\Lambda}(\tau_{\Lambda}(a_1),a_1)| > 0$, since $\tau_{\Lambda}(a_0) \in mesh_{\Lambda}(\tau_{\Lambda}(a_1),a_1)$. Suppose that $q_1>1$.  Let $a_2 = (n_2,p) \in mesh_{\Lambda}(\tau_{\Lambda}(a_1), a_1)$ be any vertex with the smallest first coordinate among vertices of $mesh_{\Lambda}(\tau_{\Lambda}(a_1),a_1)\setminus\{\tau_{\Lambda}(a_0)\}$. Then, possibly applying several commutations, one can assume that $\tau_{\Lambda}(a_1) = (n_2-1,t)$.
 
 \begin{figure}[H]
  \centering 
 \begin{tikzpicture}
 \draw (-0.5,1) -- (0.5,0); 
 \draw [dotted] (0.5,0) -- (3.5,0);
 \draw (3.5,0) -- (4.5,1);
 \draw (4.5,1) -- (3.5,2);
 \draw (-0.5,1) -- (0.5,2); 
 \draw [dotted] (0.5,2) --  (3.5,2);
 \draw (0.5,2) -- (1.5,1) -- (0.5,0);
 \draw [dotted] (1.5,1) --  (2.5,1);  
 \draw (3.5,2) -- (2.5,1) -- (3.5,0);
 \draw (0.5,2) -- (-0.5,3) -- (-1.5,2) --(-0.5,1);
 \draw (-4.5,2) -- (-3.5,3) -- (-2.5,2) -- (-3.5,1) --(-4.5,2); 
 \draw [dotted] (-3.5,1) -- (-0.5,1);
 \draw [dotted] (-1.5,2) -- (-2.5,2);
 \draw [dotted] (-0.5,3) -- (-3.5,3);
\draw [fill=white,white] (-0.5,1) circle [radius=0.2];
\draw [fill=white,white] (0.5,0) circle [radius=0.2];
\draw [fill=white,white] (4.5,1)  circle [radius=0.2];
\draw [fill=white,white] (3.5,0) circle [radius=0.3];
\draw [fill=white,white] (1.5,1) circle [radius=0.2];
\draw [fill=black,black] (1.5,1) circle [radius=0.05];
\draw [fill=white,white]  (0.5,2) circle [radius=0.2];
\draw [fill=white,white]  (3.5,2) circle [radius=0.2];
\draw [fill=white,white] (2.5,1) circle [radius=0.2];
\draw [fill=black,black] (2.5,1) circle [radius=0.05]; 
\draw [fill=white,white] (-1.5,2) circle [radius=0.2];
\draw [fill=black,black] (-1.5,2) circle [radius=0.05]; 
\draw [fill=white,white]  (-2.5,2) circle [radius=0.2];
\draw [fill=black,black]  (-2.5,2) circle [radius=0.05]; 
\draw [fill=white,white]   (-3.5,1) circle [radius=0.2];
\draw [fill=white,white]  (-3.5,3) circle [radius=0.2];
\draw [fill=white,white]  (-0.5,3)  circle [radius=0.2];
\draw [fill=white,white]   (-4.5,2)  circle [radius=0.2];
\draw [fill=white,white] (-0.5,1)  ellipse (20pt and 8pt);
\node at (-0.5,1) {\footnotesize $\tau_{\Lambda}(a_0)$};
\node at (0.5,0) {};
\node at (4.5,1) {\footnotesize $a_0$};
\node at (3.5,0) {};
\node at (-0.5,-1) {};
\node at (0.5,2) {\footnotesize $a_1$};
\node at (3.5,2) {};
\node at (1.5,1) {};
\node at (2.5,1) {};
\node at (-1.5,2) {};
\node at (-2.5,2) {};
\node at (-4.5,2) {\footnotesize $\tau_{\Lambda}(a_1)$};
\node at (-3.5,1) {};
\node at (-3.5,3) {\footnotesize $a_2$};
\node at (-0.5,3) {};
\end{tikzpicture} 
\caption{}

\end{figure}

 Continue in the same fashion to obtain a sequence of elements $a_0, \dots, a_k$ of $\Lambda$ such that $\tau_{\Lambda}(a_m) \in mesh_{\Lambda}(\tau_{\Lambda}(a_{m+1}),a_{m+1})$, $a_{m+1}\in mesh_{\Lambda}(\tau_{\Lambda}(a_{m}),a_{m})\setminus\{\tau_{\Lambda}(a_{m-1})\}$ for every $m = 0, \dots, k-1$ and $\tau_{\Lambda}(a_k)\not=-\infty$.
Now set $q_m := |mesh_{\Lambda}(\tau_{\Lambda}(a_{k}),a_k)|$ for $m = 0, \dots, k$.
 We claim that if the sequence $a_0, \dots, a_k$ is  maximal with respect to inclusion among sequences satisfying this property, then $q_k=1$.
 Indeed, if $q_k>1$, then there is a vertex $a_{k+1}$ with the smallest first coordinate among vertices of ${mesh_{\Lambda}(\tau_{\Lambda}(a_k),a_k)\setminus\{\tau_{\Lambda}(a_{k-1})\}}$.
 Because the sequence we consider is maximal, $\tau_{\Lambda}(a_{k+1})= (-\infty, p_2(a_{k+1}))$. Since $\theta (-\infty, p_2(a_{k+1}))$ is either $0$ or $-1$, we have $\theta(a_{k+1}) \geq \theta\tau_{\Lambda}(a_{k})$. Then $\theta(a_k) \geq \theta\tau_{\Lambda}(a_{k-1})$, since $$\theta(a_k) + \theta\tau_{\Lambda}(a_{k}) = \theta(a_{k+1}) + \theta\tau_{\Lambda}(a_{k-1}) + \sum\limits_{y \in mesh_{\Lambda}(\tau_{\Lambda}(a_{k}),a_k) \setminus \{a_{k+1}, \tau_{\Lambda}(a_{k-1})\}} \theta(y)$$
Continuing in the same way we get $\theta(a_1) \geq \theta\tau_{\Lambda}(a_{0})$. On the other hand, $\theta\tau_{\Lambda}(a_{0}) = \theta\tau_{\Lambda}(a_{0}) + \theta(a_0) > \theta(a_1)$, because $q_0 > 1$, a contradiction. 
 
 Take a sequence $a_0, \dots, a_k$ maximal with respect to inclusion and satisfying the properties described above with the smallest corresponding sequence of integers $(q_0, \dots, q_k)$ in the lexicographic order.
 Denote such a $(q_0, \dots, q_k)$ by $seq(\Lambda)$. If $seq(\Lambda) = (1)$, there is nothing to prove, as we remarked earlier. Now suppose that the statement is known for all $(\Lambda', \theta')$ with $seq(\Lambda') < seq(\Lambda)$. We will show that there exists a sequence of commutations and braidings that turns $(\Lambda, \theta)$ into $(\widetilde{\Lambda}, \widetilde{\theta})$ such that $seq(\widetilde{\Lambda}) < seq(\Lambda)$ in the lexicographic order. Observe that commutations  do not change $seq(\Lambda)$.

 Since $q_k = 1$, $mesh_{\Lambda}(\tau_{\Lambda}(a_{k}),a_k) = \{\tau_{\Lambda}(a_{k-1})\}$ and, possibly applying several commutations, one can assume that $\tau_{\Lambda}(a_{k}), \tau_{\Lambda}(a_{k-1}), a_k$ form a braid. Let $\tau_{\Lambda}(a_{k}) = (n,j)$, $\tau_{\Lambda}(a_{k-1}) = (n+1,l), a_k = (n+2,j)$. We already have $(n+1,t) \notin \Lambda$ for every $t \in N(j) \setminus \{l\}$.

 \begin{figure}[H]
  \centering 
 \begin{tikzpicture}
 \draw (-1.5,0) -- (-0.5,1) -- (0.5,0); 
 \draw [dotted] (0.5,0) -- (3.5,0);
 \draw (3.5,0) -- (4.5,1);
 \draw (-1.5,0) -- (-0.5,-1) --  (0.5,0);
 \draw (4.5,1) -- (3.5,2);
 \draw (-0.5,1) -- (0.5,2); 
 \draw [dotted] (0.5,2) --  (3.5,2);
 \draw (0.5,2) -- (1.5,1) -- (0.5,0);
 \draw [dotted] (1.5,1) --  (2.5,1);  
 \draw (3.5,2) -- (2.5,1) -- (3.5,0);
\draw [fill=white,white] (-1.5,0) circle [radius=0.2];
\draw [fill=white,white] (-0.5,1) circle [radius=0.2];
\draw [fill=white,white] (0.5,0) circle [radius=0.2];
\draw [fill=white,white] (4.5,1)  circle [radius=0.2];
\draw [fill=white,white] (3.5,0) circle [radius=0.3];
\draw [fill=white,white] (-0.5,-1) circle [radius=0.2];
\draw [fill=black,black] (-0.5,-1) circle [radius=0.05];
\draw [fill=white,white] (1.5,1) circle [radius=0.2];
\draw [fill=white,white]  (0.5,2) circle [radius=0.2];
\draw [fill=white,white]  (3.5,2) circle [radius=0.2];
\draw [fill=white,white] (2.5,1) circle [radius=0.2];
\draw [fill=black,black] (2.5,1) circle [radius=0.05]; 
\draw [fill=white,white] (3.5,0) ellipse (20pt and 8pt);
\node at (-1.5,0) {\footnotesize $\tau_{\Lambda}(a_{k})$};
\node at (-0.5,1) {\footnotesize $\tau_{\Lambda}(a_{k-1})$};
\node at (0.5,0) {\footnotesize $a_k$};
\node at (4.5,1) {\footnotesize $a_{k-1}$};
\node at (3.5,0) {\footnotesize $\tau_{\Lambda}(a_{k-2})$};
\node at (-0.5,-1) {};
\node at (0.5,2) {\footnotesize $z$};
\node at (3.5,2) {};
\node at (1.5,1) {};
\node at (2.5,1) {};
\node at (1.5,1) {\footnotesize $w$};
\end{tikzpicture} 
\caption{$\Lambda$}

\end{figure}

To perform a braiding on $\tau_{\Lambda}(a_{k}), \tau_{\Lambda}(a_{k-1}), a_k$, we also need to have $(n+3,j) \notin \Lambda$ and $(n+2,v) \notin \Lambda$ for every $v \in N(l) \setminus \{j\}$. To this end we first apply a sequence of commutations shifting all vertices $x$ of $\Lambda$ with $p_1(x) \geq n+2$, $x \neq a_k$, to the right. More precisely, the resulting set $\overline{\Lambda}$ is $\Lambda_1 \sqcup \Lambda_2$, where $\Lambda_1 = \{x \in \Lambda \mid p_1(x) \leq n+1 \} \cup \{a_k\}$, ${\Lambda_2 = \{(q+2, r) \mid (q,r) \in \Lambda \setminus \{a_k\}, q \geq n+2 \}}$ and $\overline{\theta}|_{\Lambda_1} = \theta|_{\Lambda_1}, \overline{\theta}(q+2,r) = \theta(q,r)$ for ${(q+2,r) \in \Lambda_2}$. It is clear that such a transformation can be obtained consequently applying commutations to all vertices of $\Lambda \setminus \Lambda_1$ starting with those with the largest first coordinate.

\begin{figure}[H]
    \centering
 
 \begin{tikzpicture}
 \draw (-1.5,0) -- (-0.5,1) -- (0.5,0); 
 \draw (-1.5,0) -- (-0.5,-1) --  (0.5,0);
  \draw (-0.5,1) -- (0.5,2); 
 \draw (0.5,2) -- (1.5,1) -- (0.5,0);
 \draw (1.5,1) -- (2.5,2);
 \draw (5.5,0) -- (6.5,1) -- (5.5,2);
 \draw [dotted] (5.5,2) -- (2.5,2); 
 \draw (2.5,2) -- (3.5,1) -- (2.5,0) -- (1.5,1);
 \draw [dotted] (2.5,0) -- (5.5,0);
 \draw [dotted] (4.5,1) -- (3.5,1);
 \draw (5.5,0) --  (4.5,1) -- (5.5,2);
\draw [fill=white,white] (-1.5,0) circle [radius=0.2];
\draw [fill=white,white] (-0.5,1) circle [radius=0.2];
\draw [fill=white,white] (0.5,0) circle [radius=0.2];
\draw [fill=white,white] (-0.5,-1) circle [radius=0.2];
\draw [fill=black,black] (-0.5,-1) circle [radius=0.05];
\draw [fill=white,white] (1.5,1) circle [radius=0.2];
\draw [fill=black,black] (1.5,1) circle [radius=0.05];
\draw [fill=white,white]  (0.5,2) circle [radius=0.2];
\draw [fill=white,white]  (2.5,2) circle [radius=0.2];
\draw [fill=black,black] (0.5,2) circle [radius=0.05];
\draw [fill=white,white] (6.5,1) circle [radius=0.2];
\draw [fill=white,white] (5.5,0) circle [radius=0.2];
\draw [fill=white,white] (5.5,2) circle [radius=0.2];
\draw [fill=white,white] (4.5,1) circle [radius=0.2];
\draw [fill=white,white] (3.5,1) circle [radius=0.2];
\draw [fill=white,white] (2.5,0) circle [radius=0.2];
\draw [fill=black,black] (2.5,0) circle [radius=0.05];
\draw [fill=white,white] (4.5,1) circle [radius=0.2];
\draw [fill=black,black] (4.5,1) circle [radius=0.05];
\draw [fill=white,white] (5.5,0) ellipse (20pt and 8pt);

\node at (-1.5,0) {\footnotesize $\tau_{\Lambda}(a_{k})$};
\node at (-0.5,1) {\footnotesize $\tau_{\Lambda}(a_{k-1})$};
\node at (0.5,0) {\footnotesize $a_k$};
\node at (-0.5,-1) {};
\node at (2.5,2) {\footnotesize $\overline{z}$};
\node at (1.5,1) {};
\node at (4.5,1) {};
\node at (5.5,2) {};
\node at (5.5,0) {\footnotesize $\overline{\tau_{\Lambda}(a_{k-2})}$};
\node at (6.5,1) {\footnotesize $\overline{a_{k-1}}$};
\node at (3.5,2) {};
\node at (3.5,1) {\footnotesize $\overline{w}$};

\end{tikzpicture} 

   \caption{$\overline{\Lambda}$}
\end{figure}
 
 \bigskip Now let $(\widetilde{\Lambda}, \widetilde{\theta})$ be a pair obtained by braiding $(\tau_{\Lambda}(a_{k}), \tau_{\Lambda}(a_{k-1}),a_k)$ in $\overline{\Lambda}$. Clearly, $q_0, \dots, q_{k-2}$ remain unchanged. However, the mesh ending at $a_{k-1}$ now starts at $\tau_{\widetilde{\Lambda}}(a_{k-1})=(n+3,l)$, and hence obviously does not contain the vertex $a_k=(n+2,j)$.  We see that $|mesh_{\widetilde{\Lambda}}(\tau_{\widetilde{\Lambda}}(a_{k-1}), a_{k-1})| = q_{k-1} - 1$.
 Since any sequence that starts with $q_0, \dots, q_{k-1} -1$ is smaller in the lexicographic order than the sequence $(q_1, \dots, q_{k-1}, q_k)$, we have $seq(\widetilde{\Lambda}) < seq(\Lambda)$.

\begin{figure}[H]
\centering
 \begin{tikzpicture}
 \draw (-1.5,0) -- (-0.5,1) -- (0.5,0); 
 \draw (-1.5,0) -- (-0.5,-1) --  (0.5,0);
  \draw (-0.5,1) -- (0.5,2); 
 \draw (0.5,2) -- (1.5,1) -- (0.5,0);
 \draw (1.5,1) -- (2.5,2);
 \draw (5.5,0) -- (6.5,1) -- (5.5,2);
 \draw [dotted] (5.5,2) -- (2.5,2); 
 \draw (2.5,2) -- (3.5,1) -- (2.5,0) -- (1.5,1);
 \draw [dotted] (2.5,0) -- (5.5,0);
 \draw [dotted] (4.5,1) -- (3.5,1);
 \draw (5.5,0) --  (4.5,1) -- (5.5,2);
\draw [fill=white,white] (-1.5,0) circle [radius=0.2];
\draw [fill=white,white] (-0.5,1) circle [radius=0.2];
\draw [fill=black,black] (-1.5,0) circle [radius=0.05];
\draw [fill=white,white] (0.5,0) circle [radius=0.2];
\draw [fill=white,white] (-0.5,-1) circle [radius=0.2];
\draw [fill=black,black] (-0.5,-1) circle [radius=0.05];
\draw [fill=white,white] (1.5,1) circle [radius=0.2];
\draw [fill=white,white]  (0.5,2) circle [radius=0.2];
\draw [fill=white,white]  (2.5,2) circle [radius=0.2];
\draw [fill=black,black] (0.5,2) circle [radius=0.05];
\draw [fill=white,white] (6.5,1) circle [radius=0.2];
\draw [fill=white,white] (5.5,0) circle [radius=0.2];
\draw [fill=white,white] (5.5,2) circle [radius=0.2];
\draw [fill=white,white] (4.5,1) circle [radius=0.2];
\draw [fill=white,white] (3.5,1) circle [radius=0.2];
\draw [fill=white,white] (2.5,0) circle [radius=0.2];
\draw [fill=black,black] (2.5,0) circle [radius=0.05];
\draw [fill=white,white] (4.5,1) circle [radius=0.2];
\draw [fill=black,black] (4.5,1) circle [radius=0.05];
\draw [fill=white,white] (5.5,0) ellipse (20pt and 8pt);

\node at (-1.5,0) {};
\node at (-0.5,1) {\footnotesize $\tau_{{\Lambda}}(a_{k-1})$};
\node at (0.5,0) {\footnotesize $a_k$};
\node at (-0.5,-1) {};
\node at (2.5,2) {\footnotesize $\overline{z}$};
\node at (1.5,1) {\footnotesize $\tau_{\widetilde{\Lambda}}(a_{k-1})$};
\node at (4.5,1) {};
\node at (5.5,2) {};
\node at (5.5,0) {\footnotesize $\overline{\tau_{\Lambda}(a_{k-2})}$};
\node at (6.5,1) {\footnotesize $\overline{a_{k-1}}$};
\node at (3.5,2) {};
\node at (3.5,1) {\footnotesize $\overline{w}$};

\end{tikzpicture} 
   \caption{$\widetilde{\Lambda}$}
\end{figure}

 \end{proof}
 
   {\bf Example.} Consider $\Lambda_\gamma$ as in the previous example.

 \begin{figure}[H]
 \noindent
 \begin{minipage}{0.4\textwidth}
\centering 

\begin{tikzpicture}
\draw (-3,1.5) -- (-2,0.5) -- (-1,1.5) -- (0,0.5) --(1,1.5);
\draw  (-3,0.5) --  (1,0.5);
\draw (-3,-0.5) --  (-2,0.5) --  (-1,-0.5) --(0,0.5); 
\draw  (-4,0.5) --(-3,1.5); 
\draw  (-4,0.5) -- (-3,0.5); 
\draw  (-4,0.5) -- (-3,-0.5); 
\draw (0,0.5) --  (1,-0.5);

\draw [fill=white,white]  (-3,-0.5) circle [radius=0.2];
\draw [fill=white,white]  (-3,0.5) circle [radius=0.2];
\draw [fill=white,white]  (-3,1.5) circle [radius=0.2];
\draw [fill=white,white]  (-2,0.5) circle [radius=0.2];
\draw [fill=white,white]  (-1,-0.5) circle [radius=0.2];
\draw [fill=white,white]  (-1,0.5) circle [radius=0.2];
\draw [fill=white,white]  (-1,1.5) circle [radius=0.2];
\draw [fill=white,white]  (0,0.5) circle [radius=0.2];
\draw [fill=white,white]  (0,0.5) circle [radius=0.2];
\draw [fill=white,white] (1,-0.5) circle [radius=0.2];
\draw [fill=white,white]   (1,0.5)  circle [radius=0.2];
\draw [fill=white,white]   (1,1.5)  circle [radius=0.2];
\draw [fill=white,white]  (-4,0.5)  circle [radius=0.2];
\draw [fill=black,black]   (1,0.5)  circle [radius=0.05];
\draw [fill=black,black]  (1,-0.5) circle [radius=0.05];

\node at (-3,-0.5)  {\footnotesize 1};
\node at (-3,0.5)  {\footnotesize 1};
\node at (-3,1.5)  {\footnotesize 1};
\node at (-2,0.5)  {\footnotesize 2};

\node at (-1,-0.5)  {\footnotesize 1};
\node at (-1,0.5)  {\footnotesize 1};
\node at (-1,1.5)  {\footnotesize 1};
\node at (0,0.5)  {\footnotesize 1};
\node at (1,-0.5) {};
\node at (1,0.5) {};
\node at (1,1.5)  {\footnotesize 0};

\node at (-4,0.5) {\footnotesize 1};
\end{tikzpicture}
\caption{The first application of Lemma 16 requires just one braiding.}

\end{minipage}
\hspace{1.3cm}
\begin{minipage}[t]{0.4\textwidth}
\raggedleft
\begin{tikzpicture}
\draw (-3,1.5) -- (-2,0.5) -- (-1,1.5) -- (0,0.5) --(1,1.5);
\draw  (-3,0.5) --  (1,0.5);
\draw (-3,-0.5) --  (-2,0.5) --  (-1,-0.5) --(0,0.5); 
\draw  (-4,0.5) --(-3,1.5); 
\draw  (-4,0.5) -- (-3,0.5); 
\draw  (-4,0.5) -- (-3,-0.5); 
\draw (0,0.5) --  (1,-0.5);
\draw (1,1.5) -- (2,0.5) -- (1,0.5);
\draw (2,0.5) --  (1,-0.5);

\draw [fill=white,white]  (-3,-0.5) circle [radius=0.2];
\draw [fill=white,white]  (-3,0.5) circle [radius=0.2];
\draw [fill=white,white]  (-3,1.5) circle [radius=0.2];
\draw [fill=white,white]  (-2,0.5) circle [radius=0.2];
\draw [fill=white,white]  (-1,-0.5) circle [radius=0.2];
\draw [fill=white,white]  (-1,0.5) circle [radius=0.2];
\draw [fill=white,white]  (-1,1.5) circle [radius=0.2];
\draw [fill=white,white]  (0,0.5) circle [radius=0.2];
\draw [fill=white,white]  (0,0.5) circle [radius=0.2];
\draw [fill=white,white] (1,-0.5) circle [radius=0.2];
\draw [fill=white,white]   (1,0.5)  circle [radius=0.2];
\draw [fill=white,white]   (1,1.5)  circle [radius=0.2];
\draw [fill=white,white]  (-4,0.5)  circle [radius=0.2];
\draw [fill=white,white]  (2,0.5)  circle [radius=0.2];
\draw [fill=black,black]   (1,0.5)  circle [radius=0.05];
\draw [fill=black,black]  (1,-0.5) circle [radius=0.05];
\draw [fill=black,black]  (-1,1.5)  circle [radius=0.05];

\node at (-3,-0.5)  {\footnotesize 1};
\node at (-3,0.5)  {\footnotesize 1};
\node at (-3,1.5)  {\footnotesize 1};
\node at (-2,0.5)  {\footnotesize 2};

\node at (-1,-0.5)  {\footnotesize 1};
\node at (-1,0.5)  {\footnotesize 1};
\node at (-1,1.5) {} ;
\node at (0,0.5)  {\footnotesize 0};
\node at (1,-0.5) {};
\node at (1,0.5) {};
\node at (1,1.5)  {\footnotesize 1};

\node at (-4,0.5) {\footnotesize 1};
\node at (2,0.5) {\footnotesize 1};

\end{tikzpicture}
\caption{Now the sequence of meshes starting at $0$ is $(2,1)$.}

\end{minipage}
\end{figure} 

\begin{figure}[H]
\centering 
\begin{tikzpicture}
\draw (-3,1.5) -- (-2,0.5) -- (-1,1.5) -- (0,0.5) --(1,1.5);
\draw  (-3,0.5) --  (1,0.5);
\draw (-3,-0.5) --  (-2,0.5) --  (-1,-0.5) --(0,0.5); 
\draw  (-4,0.5) --(-3,1.5); 
\draw  (-4,0.5) -- (-3,0.5); 
\draw  (-4,0.5) -- (-3,-0.5); 
\draw (0,0.5) --  (1,-0.5);
\draw (1,1.5) -- (2,0.5) -- (1,0.5);
\draw (2,0.5) --  (1,-0.5);
\draw (2,0.5) --  (3,1.5) --(4,0.5) --  (2,0.5) -- (3,-0.5) --  (4,0.5);

\draw [fill=white,white]  (3,1.5) circle [radius=0.2];
\draw [fill=white,white]   (3,0.5) circle [radius=0.2];
\draw [fill=white,white]  (3,-0.5)  circle [radius=0.2];
\draw [fill=white,white]  (4,0.5) circle [radius=0.2];
\draw [fill=white,white]  (-3,-0.5) circle [radius=0.2];
\draw [fill=white,white]  (-3,0.5) circle [radius=0.2];
\draw [fill=white,white]  (-3,1.5) circle [radius=0.2];
\draw [fill=white,white]  (-2,0.5) circle [radius=0.2];
\draw [fill=white,white]  (-1,-0.5) circle [radius=0.2];
\draw [fill=white,white]  (-1,0.5) circle [radius=0.2];
\draw [fill=white,white]  (-1,1.5) circle [radius=0.2];
\draw [fill=white,white]  (0,0.5) circle [radius=0.2];
\draw [fill=white,white]  (0,0.5) circle [radius=0.2];
\draw [fill=white,white] (1,-0.5) circle [radius=0.2];
\draw [fill=white,white]   (1,0.5)  circle [radius=0.2];
\draw [fill=white,white]   (1,1.5)  circle [radius=0.2];
\draw [fill=white,white]  (-4,0.5)  circle [radius=0.2];
\draw [fill=white,white]  (2,0.5)  circle [radius=0.2];
\draw [fill=black,black]  (3,-0.5)  circle [radius=0.05];
\draw [fill=black,black]  (1,1.5)   circle [radius=0.05];
\draw [fill=black,black]  (1,-0.5) circle [radius=0.05];
\draw [fill=black,black]  (0,0.5)  circle [radius=0.05];
\draw [fill=black,black]  (-1,1.5)  circle [radius=0.05];
\draw [fill=black,black] (-1,0.5)  circle [radius=0.05];
\draw [fill=black,black]    (3,0.5) circle [radius=0.05];

\node at (-3,-0.5)  {\footnotesize 1};
\node at (-3,0.5)  {\footnotesize 1};
\node at (-3,1.5)  {\footnotesize 1};
\node at (-2,0.5)  {\footnotesize 2};

\node at (-1,-0.5)  {\footnotesize 1};
\node at (-1,0.5)  {};
\node at (-1,1.5) {} ;
\node at (0,0.5)  {};
\node at (1,-0.5) {};
\node at (1,0.5) {\footnotesize 1};
\node at (1,1.5)  {};

\node at (-4,0.5) {\footnotesize 1};
\node at (2,0.5) {\footnotesize 0};

\node at (3,1.5) {\footnotesize 1};
\node at (3,0.5) {};
\node at (3,-0.5) {};
\node at (4,0.5) {\footnotesize 1};

\end{tikzpicture}
\caption{The second application of Lemma \ref{mesh} begins with a sequence of commutations.}
\end{figure} 

\begin{figure}[H]
\centering 
\begin{tikzpicture}
\draw (-3,1.5) -- (-2,0.5) -- (-1,1.5) -- (0,0.5) --(1,1.5);
\draw  (-3,0.5) --  (1,0.5);
\draw (-3,-0.5) --  (-2,0.5) --  (-1,-0.5) --(0,0.5); 
\draw  (-4,0.5) --(-3,1.5); 
\draw  (-4,0.5) -- (-3,0.5); 
\draw  (-4,0.5) -- (-3,-0.5); 
\draw (0,0.5) --  (1,-0.5);
\draw (1,1.5) -- (2,0.5) -- (1,0.5);
\draw (2,0.5) --  (1,-0.5);
\draw (2,0.5) --  (3,1.5) --(4,0.5) --  (2,0.5) -- (3,-0.5) --  (4,0.5);

\draw [fill=white,white]  (3,1.5) circle [radius=0.2];
\draw [fill=white,white]   (3,0.5) circle [radius=0.2];
\draw [fill=white,white]  (3,-0.5)  circle [radius=0.2];
\draw [fill=white,white]  (4,0.5) circle [radius=0.2];
\draw [fill=white,white]  (-3,-0.5) circle [radius=0.2];
\draw [fill=white,white]  (-3,0.5) circle [radius=0.2];
\draw [fill=white,white]  (-3,1.5) circle [radius=0.2];
\draw [fill=white,white]  (-2,0.5) circle [radius=0.2];
\draw [fill=white,white]  (-1,-0.5) circle [radius=0.2];
\draw [fill=white,white]  (-1,0.5) circle [radius=0.2];
\draw [fill=white,white]  (-1,1.5) circle [radius=0.2];
\draw [fill=white,white]  (0,0.5) circle [radius=0.2];
\draw [fill=white,white]  (0,0.5) circle [radius=0.2];
\draw [fill=white,white] (1,-0.5) circle [radius=0.2];
\draw [fill=white,white]   (1,0.5)  circle [radius=0.2];
\draw [fill=white,white]   (1,1.5)  circle [radius=0.2];
\draw [fill=white,white]  (-4,0.5)  circle [radius=0.2];
\draw [fill=white,white]  (2,0.5)  circle [radius=0.2];
\draw [fill=black,black]  (3,-0.5)  circle [radius=0.05];
\draw [fill=black,black]  (1,1.5)   circle [radius=0.05];
\draw [fill=black,black]  (1,-0.5) circle [radius=0.05];
\draw [fill=black,black]  (-1,1.5)  circle [radius=0.05];
\draw [fill=black,black] (-1,0.5)  circle [radius=0.05];
\draw [fill=black,black]   (3,0.5) circle [radius=0.05];
\draw [fill=black,black]   (-3,-0.5) circle [radius=0.05];

\node at (-3,-0.5)  {};
\node at (-3,0.5)  {\footnotesize 1};
\node at (-3,1.5)  {\footnotesize 1};
\node at (-2,0.5)  {\footnotesize 1};

\node at (-1,-0.5)  {\footnotesize 2};
\node at (-1,0.5)  {};
\node at (-1,1.5) {} ;
\node at (0,0.5)  {\footnotesize 1};
\node at (1,-0.5) {};
\node at (1,0.5) {\footnotesize 1};
\node at (1,1.5)  {};

\node at (-4,0.5) {\footnotesize 1};
\node at (2,0.5) {\footnotesize 0};

\node at (3,1.5) {\footnotesize 1};
\node at (3,0.5) {};
\node at (3,-0.5) {};
\node at (4,0.5) {\footnotesize 1};

\end{tikzpicture}
\caption{Now three vertices can be braided and the sequence of meshes starting at $0$ is $(1)$ again.}
\end{figure} 

\begin{figure}[H]
\centering 
\begin{tikzpicture}
\draw (-3,1.5) -- (-2,0.5) -- (-1,1.5) -- (0,0.5) --(1,1.5);
\draw  (-3,0.5) --  (1,0.5);
\draw (-3,-0.5) --  (-2,0.5) --  (-1,-0.5) --(0,0.5); 
\draw  (-4,0.5) --(-3,1.5); 
\draw  (-4,0.5) -- (-3,0.5); 
\draw  (-4,0.5) -- (-3,-0.5); 
\draw (0,0.5) --  (1,-0.5);
\draw (1,1.5) -- (2,0.5) -- (1,0.5);
\draw (2,0.5) --  (1,-0.5);
\draw (2,0.5) --  (3,1.5) --(4,0.5) --  (2,0.5) -- (3,-0.5) --  (4,0.5);

\draw [fill=white,white]  (3,1.5) circle [radius=0.2];
\draw [fill=white,white]   (3,0.5) circle [radius=0.2];
\draw [fill=white,white]  (3,-0.5)  circle [radius=0.2];
\draw [fill=white,white]  (4,0.5) circle [radius=0.2];
\draw [fill=white,white]  (-3,-0.5) circle [radius=0.2];
\draw [fill=white,white]  (-3,0.5) circle [radius=0.2];
\draw [fill=white,white]  (-3,1.5) circle [radius=0.2];
\draw [fill=white,white]  (-2,0.5) circle [radius=0.2];
\draw [fill=white,white]  (-1,-0.5) circle [radius=0.2];
\draw [fill=white,white]  (-1,0.5) circle [radius=0.2];
\draw [fill=white,white]  (-1,1.5) circle [radius=0.2];
\draw [fill=white,white]  (0,0.5) circle [radius=0.2];
\draw [fill=white,white]  (0,0.5) circle [radius=0.2];
\draw [fill=white,white] (1,-0.5) circle [radius=0.2];
\draw [fill=white,white]   (1,0.5)  circle [radius=0.2];
\draw [fill=white,white]   (1,1.5)  circle [radius=0.2];
\draw [fill=white,white]  (-4,0.5)  circle [radius=0.2];
\draw [fill=white,white]  (2,0.5)  circle [radius=0.2];
\draw [fill=black,black]  (3,-0.5)  circle [radius=0.05];
\draw [fill=black,black]  (1,1.5)   circle [radius=0.05];
\draw [fill=black,black]  (1,-0.5) circle [radius=0.05];
\draw [fill=black,black]  (-1,1.5)  circle [radius=0.05];
\draw [fill=black,black] (2,0.5)  circle [radius=0.05];
\draw [fill=black,black]   (3,0.5) circle [radius=0.05];
\draw [fill=black,black]   (-3,-0.5) circle [radius=0.05];

\node at (-3,-0.5)  {};
\node at (-3,0.5)  {\footnotesize 1};
\node at (-3,1.5)  {\footnotesize 1};
\node at (-2,0.5)  {\footnotesize 1};

\node at (-1,-0.5)  {\footnotesize 2};
\node at (-1,0.5)  {\footnotesize 0};
\node at (-1,1.5) {} ;
\node at (0,0.5)  {\footnotesize 1};
\node at (1,-0.5) {};
\node at (1,0.5) {\footnotesize 1};
\node at (1,1.5)  {};

\node at (-4,0.5) {\footnotesize 1};
\node at (2,0.5) {};

\node at (3,1.5) {\footnotesize 1};
\node at (3,0.5) {};
\node at (3,-0.5) {};
\node at (4,0.5) {\footnotesize 1};

\end{tikzpicture}
\caption{Braiding the remaining three vertices in the sequence of meshes finishes the second application of Lemma \ref{mesh}. }
\end{figure} 

\begin{remark} One can prove the assertion converse to Theorem \ref{mesh_thm}. Namely, if $(\Lambda, \theta)$ satisfying mesh relations is such that $\theta$ is non-negative on $\Lambda$, $\theta(-\infty,i) = -1$ for some $i \in \Gamma_0$, $\theta(-\infty,k) = 0$ for all $k \in \Gamma_0\setminus \{i\}$ and $\gamma_{\Lambda}$ is left-divisible by some $s_j$ in $B_\Gamma^+$ with $j\neq i$, then $\theta$ vanishes on some vertex of $\Lambda$. Indeed, in this case $(\Lambda,\theta)$ can be transformed using commutations and braidings into $(\widetilde\Lambda,\widetilde\theta)$ such that there is a vertex $x=(n,j)\in\widetilde\Lambda$ with $\tau_{\widetilde\Lambda}(x)=(-\infty,j)$ and $mesh_{\widetilde\Lambda}\big(\tau_{\widetilde\Lambda}(x),x\big)=\varnothing$. Then the mesh relation corresponding to $x$ implies $\widetilde\theta(x)=0$ and the claim follows from the fact that commutations and braidings do not change the multiset of values of $\theta$.
\end{remark}

    \section{An application to derived Picard groups}\label{trpic}
  For this section we assume that the field $\kk$ is algebraically closed. All modules in this section are assumed to be left unless explicitly stated otherwise. Let $\mathfrak{D}_{\Gamma} = D^b(\modl-\Lambda_{\Gamma})$, the bounded derived category of finitely generated $\Lambda_{\Gamma}$-modules, where $\Gamma$ is a simply laced Dynkin diagram and $\Lambda_{\Gamma}$ is the trivial extension algebra of the $\Gamma$ diagram with alternating orientation. In other words, $\Lambda_\Gamma = kQ_\Gamma/I_\Gamma$, where $Q_\Gamma$ is one of the following quivers:

\begin{figure}[h!]
\centering
{\small
\begin{equation*}
\xymatrix{\scriptstyle{1} \ar@/^/[rr]|{\alpha_1}
&&\scriptstyle{2} \ar@/^/[ll]|{\beta_1}\ar@{.}[r]
&\scriptstyle{n-1} \ar@/^/[rr]|{\alpha_{n-1}}
&&\scriptstyle{n}\ar@/^/[ll]|{\beta_{n-1}}
}
\end{equation*} }
\caption{$\Gamma = A_n$}
\end{figure}

\begin{figure}[h!]
\centering
{\small
\begin{equation*}
\xymatrix{%
&& & && &\scriptstyle{n-1}\ar@/^/[dl]|{\beta_{n-1}} \\
\scriptstyle{1} \ar@/^/[rr]|{\alpha_1}
&&\scriptstyle{2} \ar@/^/[ll]|{\beta_1}\ar@{.}[r]
&\scriptstyle{n-3} \ar@/^/[rr]|{\alpha_{n-3}}
&&\scriptstyle{n-2}\ar@/^/[ll]|{\beta_{n-3}}\ar@/^/[ur]|{\alpha_{n-1	}} \ar@/^/[dr]|{\alpha_{n-2}} \\
&& & && &\scriptstyle{n}\ar@/^/[ul]|{\beta_{n-2}}
}
\end{equation*} }
\caption{$\Gamma = D_n$}
\end{figure}

\begin{figure}[H]
\centering
{\tiny
\begin{equation*}
\xymatrix{%
&& && \scriptstyle{1} \ar@/^/[dd]|{\alpha_1} \\
&& && \\
\scriptstyle{4} \ar@/^/[rr]|{\alpha_4}
&&\scriptstyle{3} \ar@/^/[ll]|{\beta_4}\ar@/^/[rr]|{\alpha_3}
&&\scriptstyle{2} \ar@/^/[ll]|{\beta_3}\ar@/^/[rr]|{\alpha_{2}}\ar@/^/[uu]|{\beta_1}
&&\scriptstyle{5}\ar@/^/[ll]|{\beta_{2}}\ar@{.}[r]
&\scriptstyle{n-1} \ar@/^/[rr]|{\alpha_{n-1}} 
&&\scriptstyle{n} \ar@/^/[ll]|{\beta_{n-1}} 
}
\end{equation*} }
\caption{$\Gamma = E_n$ $(n=6,7,8)$}
\end{figure}

The ideal $I_\Gamma$ of $kQ_\Gamma$ is generated by all paths of length greater or equal to $2$, except for the paths of length $2$ starting and ending at the same vertex, and the differences of any two paths of length two starting at the same vertex.

The $\kk$-algebra $\Lambda_\Gamma$ is finite-dimensional and symmetric. Let $e_i$ be the idempotent associated with the vertex $i$ of the quiver $Q_\Gamma$. Denote by $P_i = \Lambda_\Gamma e_i$ the corresponding indecomposable projective module. Note that $P_i$ is a 0-spherical object of $\mathfrak{D}_\Gamma$. Indeed, the first condition is satisfied automatically. Since $P_i$ is projective, $\Ext_{\Lambda_\Gamma}^m(P_i, -)$ vanishes for every $m \neq 0$, and hence the second condition simply means that $\End_{\Lambda_\Gamma}(P_i) \cong \kk[t]/(t^2)$. Finally, since $\Lambda_\Gamma$ is symmetric, the last condition is satisfied automatically as well due to an isomorphism of functors $\Hom(P_i, -) \cong \Hom(-,P_i)^*$. To sum up, it is now clear that $\{P_i\}_{i \in {(Q_\Gamma)}_0}$ is a $\Gamma$-configuration of $0$-spherical objects of $\mathfrak{D}_\Gamma$.

 \begin{remark} In this less general setting we could have defined spherical twists in the following way:

 \begin{definition} (Grant, \cite{G}) The spherical twist functor along $P_i$ is
$$t_{P_i} \colon \mathfrak{D}_\Gamma \to \mathfrak{D}_\Gamma$$
$$t_{P_i}(-) = cone(\Lambda_{\Gamma}e_i \otimes_\kk e_i\Lambda_{\Gamma} \xrightarrow{m} \Lambda_{\Gamma}) \otimes_{\Lambda_{\Gamma}}^L -$$ where $m$ is the multiplication map $m(ae_i \otimes e_ib) = ab$ of $\Lambda_\Gamma$-$\Lambda_\Gamma$ bimodules.
 \end{definition}

 On objects it is easy to check that it coincides with our original definition, which is
 $$t_{P_i}(X) = cone(P_i \otimes_\kk \Hom(P_i, X) \xrightarrow{ev} X)$$
with $ev$ the obvious evaluation map. 
 Indeed, since $cone(\Lambda_{\Gamma}e_i \otimes_\kk e_i \Lambda_{\Gamma} \xrightarrow{m}  \Lambda_{\Gamma})$ is a two-term complex of right-projective bimodules,
$$cone(\Lambda_{\Gamma}e_i \otimes_\kk e_i\Lambda_{\Gamma} \xrightarrow{m} \Lambda_{\Gamma}) \otimes_{\Lambda_{\Gamma}}^L X \cong cone\big((\Lambda_{\Gamma}e_i \otimes_\kk e_i\Lambda_{\Gamma} \otimes_{\Lambda_{\Gamma}} X) \xrightarrow{} (\Lambda_{\Gamma} \otimes_{\Lambda_{\Gamma}} X)\big) $$
 $$\cong cone\big(P_i \otimes_\kk (e_i\Lambda_{\Gamma} \otimes_{\Lambda_{\Gamma}} X) \xrightarrow{} X\big)  \cong cone\big(P_i \otimes_\kk \Hom(P_i, X) \xrightarrow{ev} X\big).$$
 \end{remark} 
 
 \begin{definition} (R. Rouquier, A. Zimmermann, \cite{RZ}) Let $A$ be a finite-dimensional algebra. The {\it derived Picard group} $TrPic(A)$ of $A$ is the group of isomorphism classes of objects of the derived category of $A \otimes A^{\rm op}$-modules, invertible under  $\otimes_A^L$. Equivalently, $TrPic(A)$ is the group of standard autoequivalences of $D^b(\modl-A)$ modulo natural isomorphisms.   \end{definition}

It is known (see, for example, \cite{VZ1}) that in the case $\Gamma=A_n$ the subgroup of $TrPic(\Lambda_{\Gamma})$  generated by the spherical twist functors $t_{P_i}$ is isomorphic to the braid group $B_{A_n}$ on $(n+1)$ strands. The next corollary of Theorem 1 generalizes this result.

 \begin{corollary} The subgroup of the derived Picard group $TrPic(\Lambda_{\Gamma})$ of $\Lambda_{\Gamma}$ generated by the spherical twist functors $t_{P_i}$ is isomorphic to the generalized braid group $B_{\Gamma}$ of type $\Gamma$. \end{corollary}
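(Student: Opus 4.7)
The plan is to obtain this corollary as a direct application of Theorem~\ref{main_thm} specialised to $\omega=0$. As already noted in the excerpt, since $\Lambda_\Gamma$ is symmetric and each $P_i = \Lambda_\Gamma e_i$ is projective with $\End_{\Lambda_\Gamma}(P_i)\cong \kk[t]/(t^2)$, the family $\{P_i\}_{i\in(Q_\Gamma)_0}$ constitutes a $\Gamma$-configuration of $0$-spherical objects in $\mathfrak{D}_\Gamma$. Since $\omega=0\neq 1$, Theorem~\ref{main_thm} yields that the induced group homomorphism $F\colon B_\Gamma\to\Aut(\mathfrak{D}_\Gamma)$ sending $s_i$ to $t_{P_i}$ is injective.

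To upgrade this to a statement about $TrPic(\Lambda_\Gamma)$, the next step is to use that each $t_{P_i}$ is a standard autoequivalence. This is exactly the content of the remark preceding the corollary: the formula $t_{P_i}(-)\cong X_i\otimes^L_{\Lambda_\Gamma}(-)$ with $X_i=cone(\Lambda_\Gamma e_i\otimes_\kk e_i\Lambda_\Gamma\xrightarrow{m}\Lambda_\Gamma)$ exhibits the twist as the derived tensor product with an explicit bimodule complex. Hence $[X_i]\in TrPic(\Lambda_\Gamma)$, and since property (3) from the preliminaries provides natural isomorphisms realising the braid and commutation relations among the $t_{P_i}$, the same relations hold between the classes $[X_i]$ in $TrPic(\Lambda_\Gamma)$. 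This produces a well-defined homomorphism $\widetilde F\colon B_\Gamma\to TrPic(\Lambda_\Gamma)$ whose image is exactly the subgroup $\langle t_{P_i}\rangle$.

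Finally, $F$ factors as the composition of $\widetilde F$ with the canonical forgetful map $TrPic(\Lambda_\Gamma)\to \Aut(\mathfrak{D}_\Gamma)$ sending a standard autoequivalence to its natural-isomorphism class. Since $F$ is injective, so is $\widetilde F$, and therefore the subgroup $\langle t_{P_i}\rangle\subset TrPic(\Lambda_\Gamma)$ is isomorphic to $B_\Gamma$, as claimed.

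The main content of the argument is of course Theorem~\ref{main_thm} itself; here the only additional ingredient is the observation that spherical twists, a priori defined only up to natural isomorphism of functors, admit in this concrete algebraic setting explicit bimodule representatives $X_i$. No new obstacle arises beyond recalling that these $X_i$ are invertible in the derived bimodule category and that the forgetful map $TrPic(\Lambda_\Gamma)\to \Aut(\mathfrak{D}_\Gamma)$ is well-defined on them, both of which are standard.
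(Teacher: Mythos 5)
Your proposal is correct and follows essentially the route the paper intends: the paper offers no separate proof of the corollary, treating it as immediate from Theorem~\ref{main_thm} applied to the $\Gamma$-configuration of $0$-spherical projectives $P_i=\Lambda_\Gamma e_i$, together with the preceding remark identifying each $t_{P_i}$ with tensoring by an explicit invertible bimodule complex (so that the twists land in $TrPic(\Lambda_\Gamma)$, viewed equivalently as standard autoequivalences modulo natural isomorphism). Your factorisation of $F$ through $TrPic(\Lambda_\Gamma)$ and the resulting injectivity of $\widetilde F$ is exactly the intended argument.
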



\begin{thebibliography}{8}
   
   \bibitem{AL} R. Anno and T. Logvinenko, {\it Spherical DG-functors}. --- J. Eur. Math. Soc., 19, 2577--2656, 2017.


\bibitem{BT}
C. Brav and H. Thomas. {\it Braid groups and Kleinian singularities}. ---  Math. Ann., 351(4),1005--1017, 2011.

\bibitem{Br} T. Bridgeland. {\it Stability Conditions on a Non-Compact Calabi-Yau Threefold}. --- Commun.Math.Phys. 266, 715-733, 2006. 

\bibitem{CS1} 
R. Coelho Sim\~{o}es. {\it Hom-configurations in triangulated categories generated by spherical objects.} ---J. Pure Appl. Algebra, 219(8), 3322--3336, 2015.

\bibitem{CSP}
R. Coelho Sim\~{o}es and D. Pauksztello. {\it Torsion pairs in a triangulated category generated by a
spherical object}. --- J. Algebra, 448, 1--47, 2016.

\bibitem{CS2} 
R. Coelho Sim\~{o}es.  {\it Mutations of simple-minded systems in Calabi-Yau categories generated
by a spherical object.} --- Forum Math. , 29(5), 1065--1081, 2017.


\bibitem{Ef}
A. Efimov. {\it Braid group actions on triangulated categories}. --- The Mobius contest paper, 2007.
  
 \bibitem{G} J. Grant. {\it Higher zigzag-algebras}. --- Doc. Math., 24, 749--814, 2019.

 \bibitem{HK} A. Hochenegger, A. Krug. {\it Formality of $\mathbb{P}$ -objects}. --- Compos. Math., 155(5), 973-994, 2019. 
 
 \bibitem{HJ}
T. Holm, P. J\o rgensen, D. Yang. {\it Sparseness of t-structures and negative Calabi-Yau dimension in triangulated categories generated by a spherical object.} --- B. Lond. Math. Soc., 45, 120--130, 2013.

\bibitem{HMS} 
D. Huybrechts, E. Macr\`{i}, and P. Stellari. {\it Stability conditions for generic K3
categories}.--- Compos. Math., 144(1), 134--162, 2008.

\bibitem{Kel2}
B. Keller. {\it Deriving DG categories}. --- Ann. Sci. \'Ec. Norm.  Sup\'er., s.4, vol. 27, no. 1, 63--102, 1994.

 \bibitem{Kel} B. Keller. {\it On differential graded categories}. --- International Congress of Mathematicians, vol. II, pp. 151--190, Eur. Math. Soc., 2006.



\bibitem{QW} 
 Y. Qiu and J. Woolf. {\it Contractible stability spaces and faithful braid group actions}. --- Geom. Topol., 22(6), 3701--3760, 2018. 

 \bibitem{R}
R. Rouquier, {\it Categorification of $sl_2$ and braid groups.} --- Trends in representation theory of algebras and related topics. Vol. 406., pp. 137--167, Contemp. Math. Providence, 2005. 

   \bibitem{RZ}
R. Rouquier, A.Zimmermann. {\it Picard groups for derived module
categories}. --- Proc. London
Math. Soc., 87(1), 197--225, 2003.


\bibitem{Ric}
J. Rickard. {\it Derived equivalences as derived functors}. --- J. Lond. Math. Soc. (2), 43(1), 37--48, 1991.
        



\bibitem{Seg} 
E. Segal. {\it All autoequivalences are spherical twists}. --- 	Int. Math. Res. Notices, 2018(10), 3137--3154, 2018.

\bibitem{ST}
   P. Seidel and R. Thomas. {\it Braid group actions on derived categories of
coherent sheaves}. --- Duke Math. J., 108(1), 37--108, 2001.

\bibitem{Volk}
Y.Volkov. {\it Groups generated by two twists along spherical sequences}. --- ArXiv e-prints, arXiv:1901.10904.

\bibitem{VZ1}
Y.Volkov and A.Zvonareva. {\it Derived Picard groups of selfnjective Nakayama
algebras}. --- Manuscripta Math., 152(1), 199--222, 2017.



                           
  

       


\end{thebibliography}
 \end{document}